\documentclass{article}

\usepackage{amsmath,amssymb,amsfonts,amstext}
\usepackage{amsthm}
\usepackage{url,xspace}

\usepackage[usenames,dvipsnames]{color}
\usepackage{enumerate}

\usepackage{verbatim}
\usepackage{subcaption}
\usepackage{graphicx}
 \usepackage{hyperref}
\usepackage{cleveref}
\usepackage{thmtools}
\usepackage{array,multirow}
\usepackage[utf8]{inputenc}
\usepackage[english]{babel}
\usepackage{float}
\usepackage[numbers,sort&compress]{natbib}
\usepackage[framemethod=tikz]{mdframed}
\usepackage[toc]{appendix}
\usepackage{bbm}
\usepackage{yhmath}
\usepackage{floatpag}
\usepackage[export]{adjustbox}
\newmdenv[
  skipabove=5,
  skipbelow=5,
  innerleftmargin = .5em,
  innertopmargin = .5em,
  innerbottommargin = .5em,
  innerrightmargin = 0pt,
  rightmargin = 0pta,
  leftmargin = 2em,
  linewidth = 2pt,
  topline = false,
  rightline = false,
  bottomline = false
  ]{leftbar}
  \usepackage{natbib}
\usepackage{tikz}
\usepackage[mathscr]{eucal}
\usetikzlibrary{decorations.pathreplacing,angles,quotes}

\newcommand{\eek}{\upsilon}

\newtheorem{theorem}{Theorem}[section]
\newtheorem{lemma}{Lemma}
\newtheorem{corollary}{Corollary}
\newtheorem{proposition}{Proposition}

\newtheorem{remark}{Remark}

\newtheorem{claim}{Claim}
\newtheorem{definition}{Definition}

\newcommand{\te}{\tilde\epsilon}
\newcommand{\e}{\epsilon}
\newcommand{\R}{\mathbb{R}}
\newcommand{\N}{\mathbb{N}}
\newcommand{\fpe}{f\star\phi_\e}
\newcommand{\aep}{a_\e}
\newcommand{\be}{b_\e}
\newcommand{\gep}{{\gamma_\epsilon}}
\newcommand{\ti}{{\theta_1}}
\newcommand{\tii}{{\theta_2}}

\newcommand{\Z}{{\mathbb Z}}
\crefname{theorem}{Theorem}{Theorems}
\crefname{lemma}{Lemma}{Lemmas}
\crefname{claim}{Claim}{Claims}
\crefname{remark}{Remark}{Remarks}
\crefname{proposition}{Proposition}{Propositions}
\crefname{corollary}{Corollary}{Corollaries}
\crefname{section}{Section}{Sections}
\crefname{figure}{Figure}{Figures}
\crefname{table}{Table}{Tables}
\crefname{equation}{}{}
\crefname{definition}{Definition}{Definitions}
\crefname{construction}{Construction}{Constructions}

\title{Unbalanced Minimal Matchings for Measurable Scale Invariant Costs on
  $\mathbb{R}^d$ }

\author{Omer Angel \and Nitya Gadhiwala}

\usepackage{xcolor}

\begin{document}

\maketitle

\begin{abstract}
Given red and blue points drawn from Poisson point processes of equal
intensities, we match them by locally minimizing a cost function. In
\cref{thm:unbalcost}, we
identify all possible measurable cost functions on $\R^d$ such that the
resulting matching is scale invariant. %Then, considering
                                %thesematchings on $\R$, we include a
                                %new kind of function not considered
                                %in\cite{minimal} and prove some of
                                %their results for our new type
                                %ofunbalanced cost functions.
Our cost functions can be asymmetrical, which we refer to as
unbalanced cost functions. The investigation is motivated by
\cite{minimal} where the authors consider scale invariant cost
functions with added regularity assumptions. In \cref{compwbal}, we
discuss how different the unbalanced matchings are compared to these
more regular cost functions for points on $\R$.
\end{abstract}

\tableofcontents

\section{Introduction}

The problem of matching two discrete sets of points to each other has
been studied extensively in combinatorics, probability and
through the lens of optimal transport. In general, we are given two
locally finite collections of points $R$ and $B \in \R^d$, denoting red and
blue points and we aim to find a bijection between these
points, which also  minimizes a given cost function.

In \cite{ajtai1984optimal}, the authors sample $n$ red and $n$ blue points
uniformly from a two dimensional cube of volume $n$ aiming to minimize
the average length of an edge in the matching. They show that with high
probability there exists a matching with typical edge lengths of order
$\sqrt{\log(n)}$, and that this is the best possible rate. This result
has been extensively refined and generalized since, for different cost
functions, dimensions, distributions of red and blue points and better
asymptotics. See \cite{ledoux23,bb2013} for a survey of such results.

In the landmark paper of Gale and Shapley \cite{galeshapley}, a slightly different ``cost
function'' is studied where the red and blue points, submit lists of
preferences denoting which points of the opposite color they would
like to be matched to. A matching is said to be stable if there does
not exist a pair of red and blue points that are not matched to each other, who
prefer each other over their current partners. In \cite{galeshapley},
the authors provide
an algorithm to construct such a stable matching. When matching red and blue points on
$\R^d$, one can consider stable matchings where points prefer to be
matched to points of the opposite color which are closest to them.

Matchings where the red and blue points are sampled from Poisson point
processes of equal intensity on $\R^d$ have been studied in
\cite{holroyd2003trees,hpps,timar23,hmo24,minimal,h}.
In \cite{hpps},
the authors consider translation invariant matchings with no unmatched
points and show that for $d=1,2$, the length of an edge has infinite
first moment. And that for $d\geq 3$, there exist translation invariant matching schemes
where the matching distance has finite exponential moments. They also
compare these matchings to the Gale-Shalpey stable matching
\cite{galeshapley}.
  In \cite{h}, the author considers a cost function which is
  locally given by sums of lengths of edges and shows that such a minimal
  matching exists for $d=1$ and $d\geq 3$ but not on the strip
  $\R\times[0,1]$. In \cite{hmo24}, the authors show that for the
  above problem, there does not exist such a minimal matching in
  $d=2$. Furthermore, they show that a minimal matching doesn't exist
  in $d=2$ for a cost function given by sums of $p^{\mbox{th}}$ powers
  of the lengths of edges for any $p>1$.

In \cite{minimal}, the authors consider cost functions resulting in scale invariant matchings
between Poisson point processes on $\R$ which satisfy certain
regularity assumptions. A scale invariant matching is
one such that the matching which minimizes the cost function does so
on all scales i.e., even on zooming in and zooming out. They classify
functions which could result in scale invariant matchings on $\R$ and
describe some properties of these minimal matchings. In this paper, we only assume measurability
of the cost functions and classify what any cost function on $\R^d$
 look like in \cref{thm:unbalcost}. Our cost functions can be
asymmetric which we refer to as unbalanced cost functions. This means
that the cost of edges not only depends on the length of the edge but
also on a coefficient which may change with the orientation of the
edge. We compare the minimal matchings obtained for these unbalanced
cost functions to the balanced cost functions from \cite{minimal} in
\cref{compwbal}.

\subsection{Model and Main results}\label{sec:modelmainresults}

In this section, we introduce the model and  state the
main results.

Let $R=\{x_i\}_{i\in I_1}$ and $B=\{y_i\}_{i\in I_2}$ be two countable subsets of $\mathbb{R}^d$ denoting
the red points and the blue points respectively. We aim to classify
scale invariant
matchings between these points.  A matching $M$ between $R$ and $B$ is a set of edges, given by ordered pairs
  $\langle x,y \rangle$, such that $x\in R$, $y\in B$ and each red or
  blue point belongs to at most one edge in the matching. A matching with no unmatched
  points of any color is called a {\em perfect matching}.

We use $M(x)$ to denote the point that $x$ is matched
to, so for an edge $\langle  x,y \rangle$ with $x\in R$ and $y\in B$ we
have $M(x)=y$ and $M(y)=x$. If a point $z$ is unmatched we say that
$M(z)=\infty$, consequently, $M^{-1}(\infty)$ denotes the set
of unmatched points.  Our goal is to match these points in a way that leaves no
points unmatched and also minimizes a given cost function.

Given a function $f:\R^d\rightarrow \R$ we define the cost
of a matching as follows. For an edge $\langle x,y \rangle$ with
$x\in R$ and $y\in B$, we let $f(y-x)$ denote the cost of that edge
\footnote{ Note that the cost of an edge
depends on the relative position between the two points but not where
they are located in $\R^d$. So the costs of  $\langle x,y \rangle$
and  $\langle x+c,y+c \rangle$ are the same, however the cost of
$\langle x,y \rangle$ and $\langle y,x \rangle$ could be different. }. For a matching with multiple
edges, the cost of the matching is given by the sum of the cost of each
edge. We say an unmatched edge has infinite cost. We  refer to
such an $f$ as the cost function. Therefore, given a perfect matching
$M$
between red points given by
$R=\{x_i\}_{i=1}^n$ and blue points given by $B=\{y_j\}_{j=1}^n$, we call
$f[M]$ the cost of the matching where

  \begin{equation}
   \label{eq:costofamatching}
    f[M] = \sum_{i=1}^n f(M(x_i) - x_i).
  \end{equation}
%%%%%%%%%%%%%%
The above definition can be used to calculate the costs of different matchings
between finite collections of red and blue points with equal cardinality.

\begin{definition}[minimal matchings] \label{def:mm}

  \begin{enumerate}[(i)]
    \item  Given two finite collections of
points $R, B\in\R^d$ with equal cardinality, and a cost function
$f:\R^d\rightarrow\R$, we call a matching $M$ the $f$-minimal matching if
$f[M]\leq f[N]$ for any other matching $N$.

\item If  $R$ and $B$ have different cardinality we let the minimal matching
be given by the partial matching which minimizes the sum of cost of
the edges over partial matchings with as few unmatched points as
possible.

\item Given two countable collections of points $R,B\in\R^d$, and a
  cost function $f:\R^d\rightarrow \R$, we call $M$ an $f$-minimal
  matching if for any finite subset of the edges, the $f$-minimal
  matching between those points is given by $M$ restricted to those
  edges.
  \end{enumerate}
\end{definition}

Since imitating  \cref{def:mm}(i) for matchings
between countable collections of red and blue points results in the
cost of any given matching to be infinite, we use in (iii) a formulation which
arises from cyclical monotonicity in optimal transport  (see
\cite{villani2008optimal}). Intuitively, a matching between countable
collections of points in $\R^d$ is minimal if it cannot be locally
improved by a finite reordering of the edges.
In general, given a cost function $f$ and red and blue points $R,B
\in\R^d$, a minimal matching might not exist or even be unique. There may
even be infinitey many minimal matchings for certain cost functions
(See \cref{super}).

Below we define a property of cost functions called scale invariance which is a central notion in this paper.

\begin{definition}[Scale invariance]
Consider a cost function $f:\R^d\rightarrow\R$ and a collection of
points $R$ and $B$ as defined above such that there exists an $f$-minimal matching $M$ between
them. If for all $k>0$, an $f$-minimal matching between the rescaled
points $kR=\{kr:r\in R\}$ and $kB=\{kb:b\in B\}$ is given by $M_k$
where $M_k(kx)=kM(x)$ for all matched points $x\in R\cup B$, then we say that $f$ is
a scale invariant cost function.
\end{definition}

In other words,
zooming in or zooming out doesn't reorder a scale invariant minimal matching.
In \cite[Proposition 18]{minimal}, the authors classify all the scale
invariant cost functions on $\R$ which satisfy some regularity assumptions.
They assume that the cost function is symmetric, continuously
differentiable and strictly monotone. In \cref{thm:unbalcost} we
extend their result by assuming only measurability of the scale invariant cost
function.
In the theorem below, we view $\R^d$ in  spherical coordinates
$\R^d\cong\R_{\geq 0}\times S^{d-1}$.
\begin{samepage}
  \begin{theorem}[Any measurable scale invariant cost function]
  \label{thm:unbalcost}
 Let $R$ and $B$ be countable collections of red and blue points in $\R^d$. Any measurable  cost
function $f:\R_{\geq 0}\times S^{d-1}\rightarrow \R$ which results in
 a scale invariant minimal matching between $R$ and $B$ is given by
either
\begin{enumerate}
\item   \begin{equation} \label{eq:tmp1}
    f(r,\theta)=
      a(\theta)r^{\gamma}+b
    \end{equation}
    where $a:S^{d-1}\rightarrow\R$ is measurable, $b\in\R$, $\gamma\neq 0\in\R$ and
    $a\gamma>0$ or
  \item   \begin{equation}\label{eq:tmp2}
    f(r,\theta) = a \log(r)+b(\theta)
  \end{equation}
where $b:S^{d-1}\rightarrow(0,\infty)$ is measurable and $a>0$.
\end{enumerate}
\end{theorem}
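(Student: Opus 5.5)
The first step is to turn scale invariance into a functional equation. Scale invariance for all configurations means that for every $k>0$ and every pair of finite matchings $M,N$ on the same points, $f[M]\le f[N]$ if and only if $f_k[M]\le f_k[N]$, where $f_k(v)=f(kv)$. Taking two red and two blue points, this says that for all vectors $u,v,u',v'$ with $u+v$ and $u'+v'$ compatible (the two possible matchings of a $2\times 2$ configuration), the sign of
\[
f(u)+f(v)-f(u')-f(v')
\]
is invariant under $u\mapsto ku$. Taking larger configurations gives the analogous statement for every comparison of sums of costs over finitely many edges with matching endpoints.

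From this I would show that there are functions $\alpha(k)>0$ and $\beta(k)$ with
\[
f(kv)=\alpha(k)f(v)+\beta(k)\qquad\text{for all } v.
\]
The route I plan to take: fix $k$ and view $g=f_k$ and $f$ as two "utilities" on edge vectors inducing the same ordering on all finite sums subject to the combinatorial constraint. This is a Debreu/Thomsen-type additive representation uniqueness statement. Concretely, I would pick reference edges. For two collinear vectors $w$ and $w'$ one can build configurations realizing $f(w)+f(w')$ against $f(w+\cdot)+\cdots$, which forces equalities such as $f(a)+f(b)=f(c)+f(d)$ to be preserved under $k$. Using many-point configurations, I would show that the map $t\mapsto g(f^{-1}(t))$ preserves midpoints (Jensen). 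Measurability then makes it affine, giving the displayed equation.

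Next I would solve the resulting Pexider/Cauchy system in spherical coordinates. Write $h_\theta(s)=f(e^s,\theta)$. The relation becomes $h_\theta(s+\log k)=\alpha(k)h_\theta(s)+\beta(k)$ with coefficients independent of $\theta$. The cocycle identity forces $\alpha(k)=k^{\gamma}$, since $\alpha$ is a measurable multiplicative homomorphism. There are then two cases.

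\textbf{Case $\gamma\ne0$.} Here $\beta(k)=c(k^\gamma-1)$, and $f+c$ is $\gamma$-homogeneous. This gives $f=a(\theta)r^\gamma+b$.

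\textbf{Case $\gamma=0$.} Here $\beta$ is additive in $\log k$, so $\beta(k)=a\log k$, and $f=a\log r+b(\theta)$.

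Measurability of the homomorphisms is what rules out pathological solutions (a Steinhaus-type argument: measurable additive maps are linear).

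Finally, the sign conditions $a\gamma>0$ and $a>0$ come from requiring that minimal matchings exist and are meaningful, for instance that edges cannot be arbitrarily cheap at long range, so that cost increases with length. I would also check the constant cases: $\gamma=0$ together with $a=0$ gives degenerate costs, which are excluded. The constraint $b(\theta)>0$ presumably reflects a normalization, and I would try to match it against the intended statement. The main obstacle is the second step: extracting the affine relation $f_k=\alpha f+\beta$ from order-preservation with only measurability, and in particular building enough configurations in $\R^d$ (including distinct directions $\theta$) to connect values of $f$ across different directions. That last point is needed so that $\alpha$ and $\beta$ do not depend on $\theta$.
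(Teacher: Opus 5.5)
\textbf{There is a genuine gap}, and it is the step you yourself flag as the main obstacle. The relation $f(kv)=\alpha(k)f(v)+\beta(k)$ is asserted, and the mechanism you sketch does not produce it.

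\emph{Your reformulation of scale invariance is too strong.} The definition only says that a minimal matching is carried to a minimal matching of the rescaled points. So a comparison of $f[M]$ with $f[N]$ survives rescaling only when $M$ and $N$ are the sole candidates for the minimum, not for arbitrary pairs of matchings in larger configurations.

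\emph{Two red and two blue points are not enough.} There the transfer is automatic, but the edge vectors are tied by $u+v=u'+v'$. On a single ray with all four edges of the same orientation, this is always an outer pair against an inner pair of equal total length. For every concave increasing $f$ the nested pair wins (weakly) at every scale, so these comparisons carry no information beyond concavity. The mixed configuration red, blue, red, blue compares $f(A)+f(C)$ with $f(A+B+C)+f(-B)$, again with the lengths tied. It does not by itself give $\phi_k(p)+\phi_k(q)=2\phi_k(\tfrac{p+q}{2})$ for $\phi_k=f_k\circ f^{-1}$.

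\emph{What decoupling the lengths requires.} You need configurations with unequal numbers of red and blue points, such as the alternating configuration with gaps $x,w,y,z$ in \cref{lem:sc-inv}, and you must then exclude the four competing matchings. The paper can do this only for lengths in short windows $[t_1,t_1+\delta]$ and $[t_2-\delta,t_2]$ around a pair $t_1>0>t_2$ with $f(t_1)=f(t_2)$. It does so only after proving that $f$ is monotone along rays (the Lebesgue density argument of \cref{thm:monotone}) and continuous (\cref{thm:continuous}). Your plan contains neither.

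\emph{Continuity is also what the Jensen step needs.} Measurability of $\phi_k$ is not the issue: one red and two blue points already make $\phi_k$ well defined and strictly increasing on the range of $f$. But midpoint affinity forces affinity only on an interval, so you need the range of $f$ along a ray to be one.

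\textbf{How your idea could be completed.} With those ingredients, your idea gives a different and more elementary finish than the paper's. Take $z=w$ in \cref{lem:sc-inv}; at equality both matchings are minimal, so the tie transfers. The intermediate value theorem then shows that $\phi_s$ satisfies Jensen's equation on $[c,c+\eta]$, where $c=f(t_1)=f(t_2)$ and $\eta>0$ is small. Being increasing, $\phi_s$ is affine there. Writing $h(\sigma)=f(e^\sigma)$, this says $h(\sigma+\tau)=A(\tau)h(\sigma)+B(\tau)$ for $\sigma$ in a short interval $J$ and all $\tau$. Comparing two overlapping shifts gives $A(\tau+\tau')=A(\tau)A(\tau')$ and $B(\tau+\tau')=A(\tau)B(\tau')+B(\tau)$ for small $|\tau'|$. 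These extend to all $\tau'$, and your cocycle computation then yields the power and logarithmic forms. This would replace the multiplicative convolution, \cref{scinvfrommin} and the derivative-ratio \cref{lem:t1delta}. As it stands, though, the proposal supplies neither this local comparison nor the monotonicity and continuity it rests on, and those carry most of the paper's work.
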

\end{samepage}

 \cref{thm:unbalcost} shows that there cannot be a measurable and highly irregular
 discontinuous function that is also scale invariant. One way the cost
 function above is more  irregular compared to that in \cite{minimal}
 is via $a(\theta)$ and $b(\theta)$. If these two functions are
 non-constant, there may be edges of the same length which
 have different cost based on their orientation in space. We refer to
 this as the ``unbalancedness'' of the cost function. We present the proof of \cref{thm:unbalcost} in \cref{sec:cost}.

\begin{remark}[rotational invariance]
  In addition to scale invariance, if we were
  to also assume rotational invariance of the minimal matchings, we would
  recover the cost functions found in \cite{minimal}. For these costs,
  the functions $a(\theta)$ and $b(\theta)$ from \cref{thm:unbalcost}
  would be constants, and the cost of an edge would
  depend only on its length
  and not its orientation.\end{remark}

Notice that upon rescaling or translating a cost function, the minimal
matching induced by it remains unchanged. Therefore we say that two
cost functions $f$ and $g$ are equivalent to each other if there exist
constants $c_1,c_2\in(0,\infty)$ such that $f = c_1 g + c_2 $ or vice versa.
In this paper, we
specifically study what minimal matchings which arise from the above
cost functions look like in one dimension. In
\cref{cor:unbalcost_onedim} below, we list out
measurable scale invariant cost function in one dimension.

\begin{samepage}
  \begin{corollary}[$d=1$]\label{cor:unbalcost_onedim}
  A measurable scale invariant cost function in one dimension
  $f:\R\rightarrow\R$ must be equivalent to either
  \begin{align}
    \label{eq:costonedim}
    f_{\gamma,a}(x) &=
    \begin{cases}
      a|x|^\gamma, & x<0,\\
      x^\gamma, & x\geq 0,\\
    \end{cases} \qquad\qquad \mbox {or} \\
    f_{0,a}(x) &=
           \begin{cases}
             \log (a|x|) , & x<0,\\
             \log(x)  , & x\geq 0,
           \end{cases} \qquad \mbox{or}\nonumber\\
   f_{-\gamma,a}(x)&=
           \begin{cases}
             -a |x|^{-\gamma} ,& x<0,\nonumber\\             -
             x^{-\gamma}, & x\geq 0,
           \end{cases}
  \end{align}
  for some $a,\gamma>0$.
\end{corollary}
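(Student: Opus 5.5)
The plan is to specialize \cref{thm:unbalcost} to $d=1$ and then reduce each resulting family to one of the listed normal forms using the equivalence $f\sim c_1f+c_2$ with $c_1,c_2\in(0,\infty)$. For $d=1$ we have $S^0=\{-1,+1\}$, and a point $x\in\R$ has spherical coordinates $r=|x|$ and $\theta=\operatorname{sgn}(x)$, taking $\theta=+1$ at $x=0$. Measurable functions on $S^0$ are just pairs of real numbers. Case 1 of \cref{thm:unbalcost} therefore reads
\[
f(x)=a_+ x^\gamma+b \quad (x\ge 0), \qquad f(x)=a_-|x|^\gamma+b \quad (x<0),
\]
with $a_\pm\gamma>0$, meaning $a_+$ and $a_-$ both have the sign of $\gamma$. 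Case 2 reads
\[
f(x)=\alpha\log|x|+b_\pm,
\]
with $\alpha>0$ and $b_\pm>0$.

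\textbf{Case 1, $\gamma>0$.} Here $a_\pm>0$. Dividing by $a_+$ and subtracting the constant $b/a_+$ yields $f_{\gamma,a}$ with $a=a_-/a_+>0$.

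\textbf{Case 1, $\gamma<0$.} Write $\gamma=-\gamma'$ with $\gamma'>0$. Now $a_\pm<0$, so dividing by the positive number $-a_+$ gives $-x^{-\gamma'}$ for $x\ge0$ and $-a|x|^{-\gamma'}$ for $x<0$, with $a=a_-/a_+>0$. This is $f_{-\gamma',a}$.

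\textbf{Case 2.} Divide by $\alpha$ and subtract $b_+/\alpha$. The result is $\log x$ for $x>0$ and $\log|x|+(b_--b_+)/\alpha$ for $x<0$. Setting $a=e^{(b_--b_+)/\alpha}>0$, the negative side becomes $\log(a|x|)$, giving $f_{0,a}$.

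There are a few bookkeeping points to address. First, the equivalence permits only positive $c_2$. Adding a constant shifts every perfect matching's cost on a finite set by the same amount, so the set of minimal matchings is unchanged for any real shift. I would note this, or simply observe that the definition's ``vice versa'' together with an arbitrary real $b$ covers both signs. Second, the value at $x=0$ (and $\log 0$) is irrelevant, because red and blue points can be assumed distinct, or because only the values at nonzero displacements matter. Third, the sign convention $f(y-x)$ with $x$ red determines which side is called positive, but swapping sides just replaces $a$ by $1/a$ after renormalizing, so the family is closed under it.

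I expect no serious obstacle, since all the content lies in \cref{thm:unbalcost}. The only care needed is in the sign analysis for $\gamma<0$: one must divide by a positive constant and keep the minus sign, rather than flip the sign of $f$, which would reverse minimality.
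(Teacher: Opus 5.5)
Your proposal is correct and is essentially the paper's own route: the paper gives no separate argument and treats the corollary as the specialization of \cref{thm:unbalcost} to $S^0=\{-1,+1\}$, normalized by dividing by a positive constant and shifting, with $a=a_-/a_+$ in the power cases and $a=e^{(b_--b_+)/\alpha}$ in the logarithmic case. Your bookkeeping, namely dividing by $-a_+>0$ rather than flipping the sign when $\gamma<0$, and noting that a shift by any real constant leaves minimal matchings unchanged, is exactly the care this normalization requires.
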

\end{samepage}

\begin{remark}
Note that for $f_{-\gamma,a}$, the cost function has
a negative coefficient to ensure that it will be an increasing
function of the distance. \end{remark}

When working in $\R$, we call edges $\langle r,b \rangle$ with $r<b$, red-blue edges and for
$b<r$, blue-red edges. The cost function introduced
in this paper in
\cref{cor:unbalcost_onedim} causes blue-red edges to be $a$ times as
expensive as red-blue edges of the same length. The parameter $a\in(0,\infty)$
represents a measure of bias against blue-red edges.

Naturally, $(\gamma,a)$-minimal matchings for the same set of points
but for different values of $\gamma$ and $a$ might lead to different
matchings. We demonstrate this in \cref{fig:sims} for the same $20$
red and $20$ blue points by plotting the different minimal matchings.  In \cref{fig:scatter}, we
consider the same $100$ red and $100$ blue points and plot the
distributions of the different minimal matchings.% \note{I could insert  \cref{fig:scatter} here. Should I do it? This would cause me to  separate \cref{fig:scatter} and \cref{fig:diffs}. }

In \cite{minimal}, the authors assume symmetry of the cost function
(which is not necessarily required for scale invariance) and study the
special case of \cref{cor:unbalcost_onedim} when $a=1$. We call the
corresponding cost function $f_{\gamma,1}$ the balanced
cost and the $f_{\gamma,1}$-minimal matching, the balanced $\gamma$-minimal
matching. When $a\neq 1$, we call $f_{\gamma,a}$ the unbalanced cost
and the corresponding $f_{\gamma,a}$-minimal matching, the unbalanced
$(\gamma,a)$-minimal matching. In \cite[Theorem 1]{minimal}, the
authors prove that for any value of $\gamma$, these minimal matchings
exist and are perfect. This proof verbatim can also be used to show
the same for unbalanced $(\gamma,a)$-minimal matchings.

  Our next main result is \cref{compwbal}, where we compare balanced
  and unbalanced minimal matchings.  Before we state this result, we
  discuss below some limiting behaviours
of the matching as $\gamma$ approaches $+\infty,-\infty, 0 $ and $1$. These
have been introduced in \cite{minimal}. Assume for now $a$ is fixed.

  \begin{enumerate}
  \item   To interpret $\gamma =0$, we take the limit as $\gamma\rightarrow
    0^+$, as $a=\alpha^\gamma$ for some $\alpha$ fixed.
Since adding or multiplying the cost function by a constant doesn't
    change the matching and $\lim_{\gamma\rightarrow 0^+}\frac{|\alpha
    x|^\gamma
  -1}{\gamma} = \log( \alpha x)$,  we consider the
$0$-minimal matching to be given by the cost function $\log(|\cdot|)$.

\item
  If we let $\gamma\rightarrow-\infty$, the cost of the
  minimal matching between $n$ pairs of points at $\{x_i\}_{i=1}^n$
  and $\{y_j\}_{j=1}^n$ is given by
\begin{equation}\label{eq:costofnpairs}
\min_{\sigma\in S_n} \sum_{i=1}^n - \left[
  a\mathbbm{1}_{\{x_i<y_{\sigma(i)}\}} +
  \mathbbm{1}_{\{x_i>y_{\sigma(i)}\}} \right]
(|x_i-y_{\sigma(i)}|)^\gamma .\end{equation}
As $\gamma\rightarrow -\infty$, the sum will be dominated by the
smallest edge regardless of whether its coefficient is $a$ or $1$.
Therefore, this matching aims to minimize the smallest edge.
  In   \cite[Lemma 8]{minimal}, the authors prove that the above construction
  of the
  $(-\infty)$-minimal matching results in a stable matching \cite{galeshapley}.
  A matching is said to be stable (or greedy) if for any two points $x,y$ of opposite color,
  we have \[|x-M(x)|\wedge|y-M(y)|\leq |x-y|.\]

\item If we take the limit as $\gamma\rightarrow\infty$, the cost in
 \eqref{eq:costofnpairs} will be minimized by the
 matching which minimizes the longest edge, since the sum is
 dominated by the longest edge. Again, for $a$ constant, this will
 not depend on $a$, but instead on the term
 $(|x-y_{\sigma(i)}|)^\gamma$ since we are taking
 $\gamma\rightarrow\infty$.
This is equivalent to the ``altruistic'' matching defined in
 \cite{minimal}.

 \item For $\gamma=1$, if we consider a configuration of four
 points as in \cref{fig:config}, with two red points followed by two
 blue points with distances $x$, $y$ and $z$ between consecutive
 pairs,  both the possible matchings
 have the same cost, $x+2y+z$. Since two independent Poisson point process can
 produce such a configuration of points infinitely often, we can
 switch between the two possible matchings to get uncountably many
 $(1,a)$-minimal matchings with locally the same cost.
 Therefore, we consider the limits of the
 cost function  as $\gamma\rightarrow 1^+$ and $\gamma\rightarrow 1^-$
 and call them the $(1^+,a)$ and $(1^-,a)$-minimal matchings
 respectively. For the configuration in \cref{fig:config}, the
 $(1^-,a)$-minimal matching will always contain the nested edges and the
 $(1^+,a)$-minimal matching will always contain the entwined edges.

\begin{figure}[h]
  \centering
\begin{tikzpicture}
%  \filldraw [red] (0,0) circle (2pt);
        \draw[mark=x, color=red,thick,mark options={scale=1.5}] plot
  coordinates {(0,0)};
%  \filldraw [red] (2,0) circle (2pt);
        \draw[mark=x, color=red,thick,mark options={scale=1.5}] plot
  coordinates {(2,0)};
    \filldraw [blue] (3,0) circle (2pt);
    \filldraw [blue] (4.5,0) circle (2pt);
    \draw[-] (0+0.1,0) -- (2-0.1,0) node[midway,fill=white]{$x$} ;
    \draw[-] (2+0.1,0) -- (3-0.1,0) node[midway,fill=white]{$y$} ;
    \draw[-] (3+0.1,0) -- (4.5-0.1,0) node[midway,fill=white]{$z$} ;
    \draw[->] (0,0.15) [out=45,in=135] to (3,0.15);
    \draw[->] (2,0.15) [out=45,in=135] to (4.5,0.15);
    \draw[->] (0,-0.1) [out = 315,in=225] to (4.5,-0.1);
    \draw[->] (2,-0.1) [out=315,in=225] to (3,-0.1);
    \node at (5.9,0.75) [align=left] {entwined edges/matching};
    \node at (5.9,-0.75) [align=left] {nested edges/matching};
   \end{tikzpicture}
  \caption{The matching marked above is called the entwined matching
  and the one below is called the nested matching. For $\gamma>1$, the
  minimal matching includes the entwined edges and for $\gamma<1$, the
  minimal matching includes the nested edges.}\label{fig:config}
  \end{figure}

\end{enumerate}

We are almost ready to state \cref{compwbal}. Before we do that, we introduce the
following definition, which can be used to compare two matchings.

\begin{definition}[Finite difference]
  Two matchings $M$ and $M'$ between $R$ and $B$ are said to have
  finite difference if the multigraph formed by the union of their
  edges doesn't have an infinite component.
\end{definition}

In other words, two matchings with finite difference only differ on sets of finite size. There could be infinitely many such sets, however each chain
of reordered edges only affects finitely many points. Note that finite
difference is not a transitive property (we will see this in
\cref{rem:trans}). Finally, we are
ready to state \cref{compwbal}, where we compare the
unbalanced matchings introduced in this paper to the balanced minimal
matchings from \cite{minimal}.

\begin{samepage}
  \begin{theorem}[Comparison with balanced case]\label{compwbal}
  Let $R$ and $B$ be independent Poisson point processes with
  intensity $1$ on $\R$. Let $1\neq a\in\R$ be fixed.
  \begin{enumerate}[(i)]
  \item For $\gamma\in\{-\infty,1^-,1^+,\infty\}\cup(1,\infty)$, the set of balanced
    $\gamma$-minimal matchings and unbalanced $(\gamma,a)$-minimal
    matchings are identical.
  \item For $\gamma\in(-\infty,1)$ the unique balanced
    $\gamma$-minimal matchings and the unique unbalanced
    $(\gamma,a)$-minimal matchings have finite difference.
  \end{enumerate}
\end{theorem}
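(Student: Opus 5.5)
The plan is to treat the two parts separately. Part (i) reduces to comparing finite minimization problems, where convexity does most of the work. Part (ii) is a structural, probabilistic statement about where the two matchings can disagree.

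\textbf{Part (i), $\gamma\in(1,\infty)$ and $\gamma=1^+$.} By \cref{def:mm}(iii), $M$ is minimal iff its restriction to every finite set of edges is the finite minimal matching of those endpoints. So it suffices to show that, for every finite configuration with equally many red and blue points, the balanced and unbalanced finite minimal matchings coincide. For $\gamma>1$ the function $f_{\gamma,a}$ of \cref{cor:unbalcost_onedim} is strictly convex on $\R$: it is $x^\gamma$ for $x\ge 0$, $a|x|^\gamma$ for $x<0$, and both one-sided derivatives vanish at $0$. For any cost $c(y-x)$ with $c$ strictly convex, the standard Monge exchange argument applies. If two edges are not matched in monotone (sorted) order, uncrossing them strictly lowers the cost. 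Hence the unique minimal matching is the monotone one: the $i$-th red point is matched to the $i$-th blue point in sorted order. This matching does not depend on $a$, so the balanced and unbalanced finite minimizers agree. Since the $(1^+,a)$ matching is defined as the limit $\gamma\to 1^+$, the same conclusion passes to it.

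\textbf{Part (i), $\gamma=1^-$.} Here I first use an identity. On any finite set of edges, $S=\sum (b-r)$ is unchanged by rematching. Writing $P$ and $N$ for the total lengths of the red-blue and blue-red edges, we have $P-N=S$. Hence
\[
f_{1,a}[M]=P+aN=S+(1+a)N .
\]
So $f_{1,a}$ is equivalent to $f_{1,1}$, and the two costs have the same minimizers, including the same ties.

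Tie-breaking as $\gamma\to1^-$ is governed by the next-order term in $\gamma-1$, namely the quantity $\sum x\log x$ weighted by $1$ or $a$. I would use the combinatorial description of the $1^-$ matching from \cite{minimal}: among $f_1$-minimal matchings it always prefers the nested edges in configurations like \cref{fig:config}. I would then check that each local tie configuration is decided by edges of a single orientation. In that case $a$ factors out of the comparison, and the tie is broken the same way in both cases.

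\textbf{Part (i), $\gamma=\pm\infty$.} For $\gamma=-\infty$, as noted after \eqref{eq:costofnpairs}, the sum is dominated by the smallest edge irrespective of its coefficient. So both limits give the stable matching, which is unique by \cite[Lemma 8]{minimal}. For $\gamma=\infty$, both limits give the lexicographic minimization of the longest edges (the altruistic matching), which again does not involve $a$.

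\textbf{Part (ii), $\gamma<1$.} First I show that the unbalanced minimal matching is non-crossing, i.e.\ any two edges are either nested or disjoint. The concavity exchange argument used in \cite{minimal} compares two crossing edges with their uncrossed replacements. When both edges have the same orientation the coefficient $a$ is common to all four terms and can be factored out. When orientations differ, I would argue directly from the monotonicity of $|x|^\gamma$ (respectively $\log|x|$, or $-|x|^{-\gamma}$) on each half-line. Consequently, for both matchings every edge joins two points at the same level of the height walk
\[
h(t)=\#R\cap(0,t]-\#B\cap(0,t].
\]

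Next it suffices to find, almost surely, infinitely many cut points $t\to\pm\infty$ that are crossed by no edge of either matching. Between consecutive cut points both matchings then match the points internally, so the union multigraph has only finite components. To find cut points I would use ergodicity of the translation-invariant pair of matchings, which holds because both are unique and hence factors of the Poisson processes. It is then enough to exhibit one local event, of positive probability, that forces a cut in both matchings simultaneously. The candidate event is an isolated adjacent red-blue pair with a huge empty gap on either side, which both costs are forced to match internally. The main obstacle is justifying that this event really forces a cut despite far-away points: a long edge could in principle jump over the pair. I would handle this using the non-crossing structure and the level-set picture of $h$, together with any cut-point or finite-edge-length estimates for the balanced case available from \cite{minimal}, adapted to the bounded distortion by $a$.
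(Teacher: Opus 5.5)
\textbf{Part (i).} Your argument is correct and takes a different route from the paper. The paper appeals to the classification of supercritical matchings (\cref{super}) and of the $1^-$ family (\cref{lem:1minus}), and observes that neither classification involves $a$. You instead compare the two costs on every finite configuration, which by \cref{def:mm}(iii) determines the set of minimal matchings. This is more self-contained: strict convexity of $f_{\gamma,a}$ for $\gamma>1$ settles the supercritical case, and the identity $P+aN=S+(1+a)N$ settles $\gamma=1$.

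The tie-break check you leave open for $1^-$ does go through. Total length is at least $\int|H|$, where $H$ is the height function of the finite configuration, with equality iff every point is crossed only by edges of one orientation. Hence in an $f_1$-minimizer every edge lies inside one excursion of $H$ and has the orientation of its sign. The $(\gamma,a)$-cost then splits over excursions, with constant weight $1$ or $a$ on each. For $\gamma$ near $1$ the minimizer on each excursion is therefore independent of that weight.

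\textbf{Part (ii) has a genuine gap: the cut points you look for do not exist.} The unique $(\gamma,a)$- and $(\gamma,1)$-minimal matchings for $\gamma<1$ are factors, hence translation-invariant perfect matchings. Every such matching of two independent Poisson processes on $\R$ is locally infinite.

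To see this, let $N(t)$ be the number of edges crossing $t$, and $F(t)$ the number of red-blue minus blue-red edges crossing $t$. Counting points in $(s,t)$ gives $F(t)-F(s)=W(t)-W(s)$. Hence on the translation-invariant event $\{N<\infty\}$ you have $N(t)\geq|F(0)+W(t)-W(0)|$. Since $N(t)$ has the same law for all $t$ while $W(t)-W(0)$ spreads out, that event has probability zero.

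So almost surely every $t\in\R$ is crossed by infinitely many edges of both matchings, and no positive-probability local event can force a cut. Moreover, an isolated pair matched to itself only yields a two-point component; it says nothing about the components of other points.

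\textbf{What is missing.} The fix is to localize within a single level rather than on the whole line. Your non-crossing step is correct and shows both matchings preserve levels. What you then need is quasistability (\cref{quasi}) for the unbalanced cost, with the single constant $\kappa=\max\{\kappa(\gamma,a),\kappa(\gamma,1)\}$ serving both matchings. Combine it with \cref{prop:eta} for $\eta=2\kappa+1$.

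The block $\Lambda_0\cap[-Y,Y]$ of the typical point's level has equally many red and blue points and diameter at most $2Y$. It is at distance at least $2\kappa Y$ from the rest of $\Lambda_0$. So both matchings must match it internally, while edges from other levels may cross it freely. This confines the typical point's component in the union multigraph to a finite set, which is the finite-difference statement. Quasistability is the key ingredient your proposal never invokes.
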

\end{samepage}

The proof of \cref{compwbal} can be found in \cref{sec:compwbal}. A
small variation of the above result, which combines it with
\cite[Theorem 3]{minimal} is listed below. It can be used to compare $(\gamma,a)$-minimal matchings
and $(\gamma',a')$-minimal matchings for any
$(\gamma,a)\neq(\gamma',a')$.

\begin{figure}[h]
  \centering
  \includegraphics[width=\textwidth,trim = {4cm 0 4cm 0},clip]{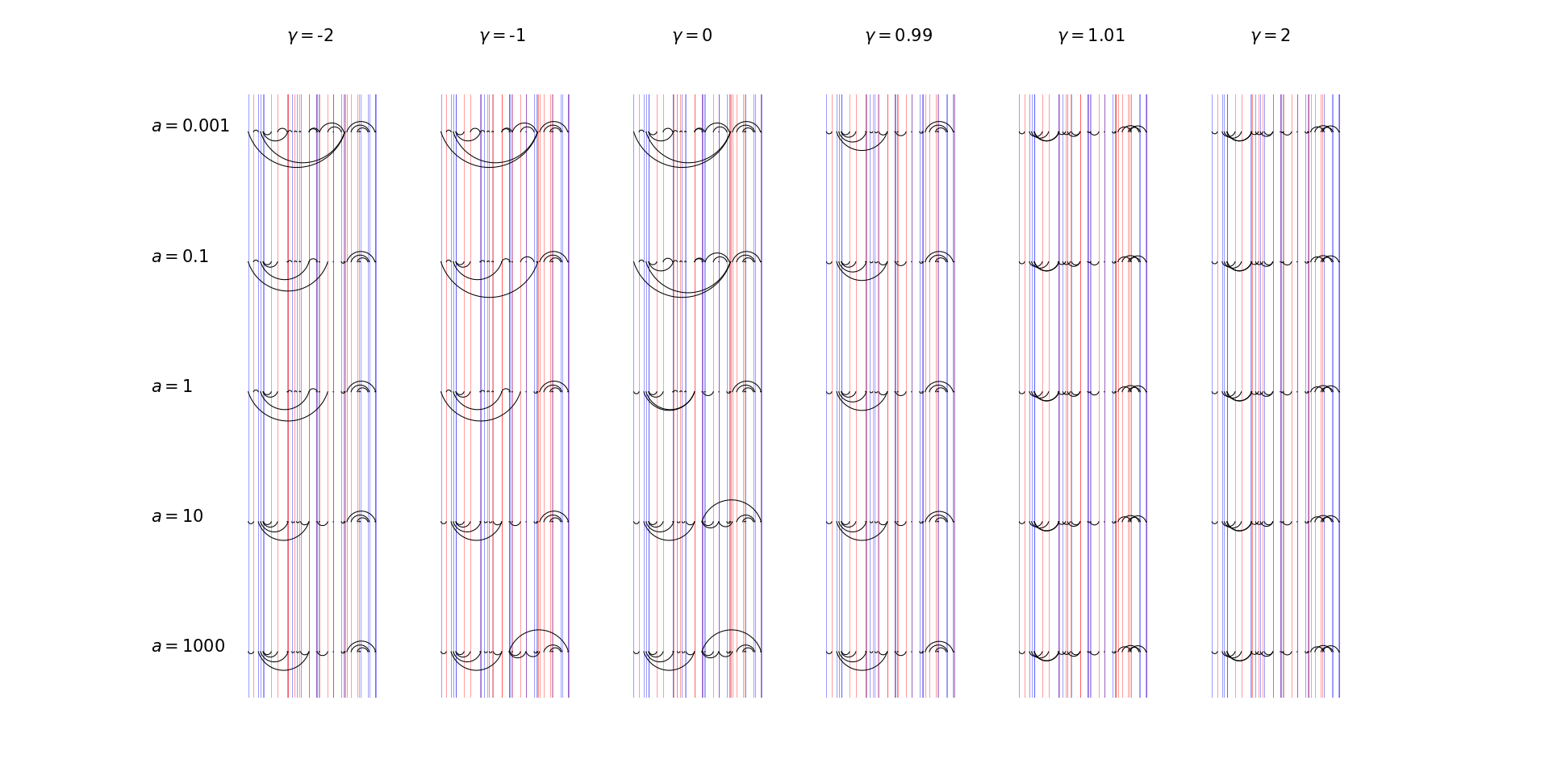}
  \caption{The $(\gamma,a)$-minimal matchings for different values of
  $\gamma$ and $a$ for the same $20$ red points and
  $20$ blue points, represented by the vertical lines. In the graph,
  blue-red edges have been drawn as downward arcs and red-blue edges
  have been drawn as upward arcs. }
  \label{fig:sims}
\end{figure}

\begin{samepage}
\begin{corollary}[Comparing any two different pairs of
  parameters]\label{cor:big}
  Let $R$ and $B$ be independent Poisson point processes with
  intensity $1$ on $\R$.
  \begin{enumerate}[(i)]
    \item If $\gamma,\gamma'\in(1,\infty]\cup\{1^+\}$ and $a,a'\in(0,\infty)$,
    the set of $(\gamma,a)$-minimal matchings is identical to the set
    of $(\gamma',a')$-minimal matchings.
    \item For $\gamma\in\{-\infty,1^-,\infty\}$, the set of $(\gamma,a)$ minimal matchings is identical to the
    set of $(\gamma,a')$-minimal matchings for any $a\neq
    a'\in(0,\infty)$.
    \item If $\gamma\in[-\infty,1)$, $\gamma'\in[-\infty,1)\cup\{1^-\}$ and
    $a,a'\in(0,\infty)$, then  the
    unique $(\gamma,a)$-minimal matching and the unique
    $(\gamma',a')$-minimal matching if $\gamma'\neq 1^-$ or any one of
    the countable $(1^-,a)$-minimal matchings if $\gamma'=1^-$ have finite difference.
  \end{enumerate}
\end{corollary}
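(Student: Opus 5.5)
The corollary follows by chaining \cref{compwbal} with \cite[Theorem 3]{minimal}, using the balanced matchings as a pivot. Recall what \cite[Theorem 3]{minimal} gives in the balanced case $a=1$. For $\gamma\in(1,\infty]\cup\{1^+\}$, all balanced $\gamma$-minimal matchings coincide as sets. For $\gamma,\gamma'\in[-\infty,1)\cup\{1^-\}$, the unique balanced $\gamma$-minimal matching and a balanced $\gamma'$-minimal matching have finite difference; when $\gamma'=1^-$ this holds for each of the countably many $1^-$-minimal matchings.

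For (i), take $\gamma,\gamma'\in(1,\infty]\cup\{1^+\}$. By \cref{compwbal}(i), the set of $(\gamma,a)$-minimal matchings equals the set of balanced $\gamma$-minimal matchings. By the balanced result, that set equals the set of balanced $\gamma'$-minimal matchings. Applying \cref{compwbal}(i) again, this equals the set of $(\gamma',a')$-minimal matchings. When $a=1$ or $a'=1$ the corresponding step is trivial.

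For (ii), \cref{compwbal}(i) covers $\gamma\in\{-\infty,1^-,\infty\}$. Both the $(\gamma,a)$- and $(\gamma,a')$-minimal sets equal the balanced $\gamma$-minimal set, so they equal each other. No transitivity of finite difference is needed, since these are equalities.

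For (iii), finite difference is not transitive, so a naive chain fails. The main work is to avoid composing two finite-difference relations. Equalities can be chained freely. Using \cref{compwbal}(i), I would replace any $(-\infty,a)$ or $(1^-,a)$ matching by the balanced one, so at most one genuine finite-difference step remains in those cases. The hard case is $\gamma,\gamma'\in(-\infty,1)$ with $a,a'\neq 1$. Here the natural chain is
\[
(\gamma,a)\sim\gamma\sim\gamma'\sim(\gamma',a'),
\]
which has three finite-difference steps.

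To handle it I would use a strengthened form of finite difference that is transitive. I would show that each relevant matching $M$ has the following property with respect to the balanced $(-\infty)$ stable matching $S$: almost surely there are infinitely many cut points in both directions, that is, points of $\R$ not crossed by any edge of $M\cup S$. A single such property implies finite difference. It also composes: if $M\cup S$ and $M'\cup S$ both have cut points accumulating at $\pm\infty$, then $M\cup M'$ has no infinite component.

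For the composition I would argue as follows. In $M\cup S$, every component between consecutive cut points is finite. An infinite component of $M\cup M'$ would need edges of arbitrarily large span or infinitely many crossings. Each edge of $M$ or $M'$ crosses only finitely many points, because the matchings are locally finite. I would combine ergodicity of the Poisson processes under translation with the fact that the cut-point sets are translation-covariant. This should yield common cut points for $M$ and $M'$ with positive density.

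I expect this last step to be the main obstacle: showing that the proofs of \cref{compwbal}(ii) and \cite[Theorem 3]{minimal} actually produce such cut points with positive density, rather than merely finite components. I would first try to extract this from those proofs, which likely proceed by exhibiting local configurations where all the matchings agree and no edge crosses. If that fails, I would pivot directly through the $-\infty$ matching. I would compare $(\gamma,a)$ with the stable matching using the proof of \cref{compwbal}(ii), and likewise for $(\gamma',a')$, so that both comparisons share one common reference matching.
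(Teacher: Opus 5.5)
Parts (i) and (ii) are fine. They are chains of equalities of sets, using \cref{compwbal}(i) and the fact that every supercritical $(\gamma,a)$ gives the same family $\{\widetilde M_k\}_{k\in\Z}$ (\cref{super}).

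The gap is in (iii). You rightly note that finite difference is not transitive, but your replacement does not work. Your claim that cut points of $M\cup S$ and of $M'\cup S$, accumulating at $\pm\infty$, force $M\cup M'$ to have finite components is false. Take consecutive blocks $X_n=\{r_n<b_n\}$. Let $S$ pair each $X_n$ internally. Let $M$ match each $X_{2k+1}\cup X_{2k+2}$, and $M'$ each $X_{2k}\cup X_{2k+1}$, by the nested pair of edges. Then the gaps before the odd-indexed blocks are cut points of $M\cup S$, and the gaps before the even-indexed blocks are cut points of $M'\cup S$. Yet $M\cup M'$ is a bi-infinite path.

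Ergodicity does not repair this. Two stationary positive-density sets that are factors of the same process can be disjoint, so the existence of ``common cut points'' is exactly what would need proof. The existing proofs do not supply cut points of $\R$ either. The window argument of \cref{compwbal}(ii) only shows that one level's block inside $[-Y,Y]$ is self-contained; edges of other levels may still cross $\pm Y$. Moreover $M_{\pm\infty}$ are locally infinite and have no cut points at all.

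Your fallback, comparing both matchings with the stable matching, is again a composition of two finite-difference relations through a common reference. That is precisely the non-transitive step. The mixed cases are also miscounted. For $\gamma\in(-\infty,1)$, $a\neq 1$ and $\gamma'\in\{-\infty,1^-\}$, the chain $(\gamma,a)\sim(\gamma,1)\sim(\gamma',1)=(\gamma',a')$ still has two genuine steps.

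The missing observation is that no pivot is needed. The proof of \cref{compwbal}(ii) never uses that the second matching is balanced. It uses only that both matchings respect levels (\cref{lem:levels}) and that both satisfy \eqref{kappa-ineq} with a common constant. So take $\kappa=\max\{\kappa(\gamma,a),\kappa(\gamma',a')\}$, $\eta=2\kappa+1$, and $Y$ from \cref{prop:eta}. The block of the origin's level inside $[-Y,Y]$ is colour-balanced and has diameter at most $2Y$. Every other point of that level lies more than $2\kappa Y$ away. By quasistability, this block is therefore matched internally in \emph{both} matchings. So the component of the origin in $M\cup M'$ is finite, and by invariance all components are finite. This direct comparison is the ``small variation'' of \cref{compwbal} that the paper intends.

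For $\gamma'=1^-$ quasistability is unavailable, but within each level a $1^-$-minimal matching pairs consecutive points. With $W>0$ on the flanks the block reads $b\,r\cdots b\,r$, and it is closed under $m^j_{br}$. For a level carrying $m^j_{rb}$, use the mirror window, with $W<0$ on the flanks (apply \cref{prop:eta} to the reflected walk). There the block reads $r\,b\cdots r\,b$. Since components of the union lie in single levels, the non-invariant $M_k$ then reduce to $M_{\pm\infty}$.
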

\end{samepage}

 \cref{fig:sims} pictures $(\gamma,a)$-minimal
matchings for different values of $\gamma$ and $a$ for the same $20$ red
and $20$ blue points. Observe that all
minimal matchings with
$\gamma>1$ are identical and do not depend on $a$. \cref{cor:big}
suggests that the $(1^-,a)$-minimal matching doesn't
depend on $a$ and in \cref{fig:sims} we see this for $\gamma=0.99$.
Finally, we see that for subcritical values of $\gamma$, we get
different matchings depending on the parameters $a$ and $\gamma$.
Furthermore, we can observe that for these subcritical matchings,
increasing the value of $a$ leads to more red-blue edges since
blue-red edges are now more expensive and vice versa for smaller
values of $a$. In \cref{fig:cluster}, we will see distributions of
$(\gamma,a)$ which result in the same minimal matchings.

So far, we have assumed that $a$, the coefficient for the bias against
blue-red edges, is a finite positive constant. We get a few more interesting cases when we
consider the following limits of $a$, which have not been studied in
\cite{minimal}.

\begin{enumerate}
\item Suppose $a=\infty$ for a fixed finite $\gamma$. We assume that any blue-red edge has infinite cost and only
  consider matchings consisting entirely of red-blue edges. The
  behaviour of $(\gamma,\infty)$-minimal matching for different values of $\gamma$ is discussed in
  \cref{thm:a-infty}.
  \item Suppose $a=0$ for a fixed finite $\gamma$. We assume that every blue-red edge has
    zero cost. We discuss $(\gamma,0)$-minimal matchings in  \cref{thm:a-zero}.
    \item If we jointly let $a\rightarrow\infty$ or $a\rightarrow 0$ while
     $\gamma\rightarrow\infty$ or $\gamma\rightarrow-\infty$. This is the most interesting setup,
    since we get a new kind of matching not discussed before. We
     discuss this in \cref{thm:weirdlim}.
  \end{enumerate}

  Before we discuss any of the above results, we introduce some
  properties of matchings.

\begin{definition}[invariance]
  For $R,B$ countable subsets of $\mathbb{R}$ and $M$ a matching
  between them, if the law of $(R,B,M)$ is invariant under
  translations of $\mathbb{R}$, we say that the matching scheme $M$ is
  invariant.
\end{definition}

The invariance property of matchings is useful because this allows us to use
various ergodicity arguments.

\begin{definition}[factor]
  If a matching scheme $M$ is invariant and can be expressed as a
  deterministic function of $R$ and $B$ we call it a factor.
\end{definition}

In other words, a factor matching doesn't require any additional
randomness in its construction. In \cref{super}, we see examples
of matchings which are not factor matchings and require an arbitrary
edge in their construction.

\begin{definition}[locally infinite] \label{def:locinf}
We say that an edge $\langle x,y \rangle$ crosses a point
$z\in\mathbb{R}$ if $x<z<y$ or $y<z<x$. If every point $z\in\mathbb{R}$ is
crossed by finitely many edges in
a matching, then the matching is said to be locally finite. Otherwise
the matching is locally infinite.
\end{definition}

While \cref{def:locinf}   is specifically for $d=1$, it can be
adapted to higher dimensions as well.

Now we are ready to discuss the limits of $a$ in more detail, starting
with  $a=\infty$. For $\gamma>0$, this is equivalent to minimizing over matchings
which only contain red-blue edges, since setting $a=\infty$ here
implies that all the blue-red edges have infinite cost.

\begin{samepage}
\begin{theorem}[Taking $a=\infty$]\label{thm:a-infty}
  Let $R$ and $B$ be independent Poisson point processes of intensity
  $1$ on $\R$.
  The $(\gamma,\infty)$-minimal matchings are as follows.
  \begin{enumerate}[(i)]
  \item For $\gamma\in (-\infty,0)$, there exist uncountably many
    $(\gamma,\infty)$-minimal matchings.
  \item For $\gamma\in[0,1)\cup\{1^-\}
    $ and $a=\infty$, the cost of any given
    blue-red
    edge is infinite and hence minimizing over matchings with only
    red-blue edges, there exists a unique
    $(\gamma,\infty)$-minimal matching.
  \item For $\gamma=1$, similarly minimizing over matchings with only
  red-blue edges, there are uncountably many $(1,\infty)$-minimal matchings.
  \item For $\gamma\in\{1^+\}\cup(1,\infty)$, there does not exist such a
  $(\gamma,\infty)$-minimal matching consisting only of red-blue edges.
  \end{enumerate}
\end{theorem}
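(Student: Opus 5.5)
We encode configurations by the walk $S(t)=\#(R\cap(0,t])-\#(B\cap(0,t])$ (with the obvious convention for $t<0$). This is a two-sided recurrent random walk with unbounded fluctuations in both directions, and every part of the proof will use that fact.

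\emph{Preliminary observation.} Suppose a perfect matching uses only red-blue edges $\langle x,y\rangle$ with $x<y$. Let $N(z)$ be the number of edges crossing $z$, i.e.\ the number of reds left of $z$ matched to the right of $z$. Then $N(z)-N(z')=S(z)-S(z')$. Since $S$ is unbounded below, $N$ cannot be finite. So every such matching is locally infinite, and any local comparison must be done through finite rearrangements as in \cref{def:mm}(iii), never through global counts.

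\textbf{Case (iv), $\gamma>1$ or $1^+$.} Here $x\mapsto x^\gamma$ is strictly convex. For $r_1<r_2<b_1<b_2$, the nested pair $\{\langle r_1,b_2\rangle,\langle r_2,b_1\rangle\}$ is strictly more expensive than the entwined pair $\{\langle r_1,b_1\rangle,\langle r_2,b_2\rangle\}$; for $1^+$ this is the defining tie-break. Hence a minimal red-blue-only matching contains no nested pair, so it is order preserving. Indexing $R=\{x_i\}$ and $B=\{y_j\}$ increasingly, it must be $M(x_i)=y_{i+k}$ for a fixed $k$. The constraint $x_i<y_{i+k}$ for all $i$ says that $\#B$ minus $\#R$ up to $x_i$ stays bounded above. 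This contradicts the unbounded fluctuations of $S$, so no such matching exists.

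\textbf{Case (ii), $\gamma\in[0,1)\cup\{1^-\}$.} Strict concavity of $x^\gamma$ (or of $\log$) reverses the comparison: entwined red-blue pairs are never minimal. Any red-blue-only matching is therefore non-crossing in the nested sense. For existence, I will define the stack (LIFO) matching: match each blue $y$ to the red $x<y$ with $x=\sup\{x'\in R:\ S(y)-S(x'^-)=0\}$, i.e.\ the first point, scanning leftwards, where the walk returns to the appropriate level. Recurrence of $S$ makes this a.s.\ well defined and a bijection. Checking minimality on any finite subset of edges uses the exchange argument plus the fact that, in a nested family of red-blue edges on a line, the concave optimum is the nested one.

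For uniqueness, suppose $M$ is minimal and $M(y)=x'$ is farther left than the stack partner $x$. Then $x\in(x',y)$, and $x$ must be matched inside $(x',y)$ or beyond $y$. The latter creates an entwined pair. The former forces, by the balance of counts on $(x,y)$, some blue inside $(x,y)$ to be matched across $x$ or across $y$; induction on this yields an entwined pair again. I expect this combinatorial uniqueness step, carried out carefully in the locally infinite setting, to be the main obstacle. I would handle it by restricting to the finite balanced excursion interval $[x,y]$ of the walk.

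\textbf{Case (iii), $\gamma=1$.} For any finite set of red-blue edges the cost is $\sum b-\sum r$, which is independent of how the edges are paired. So every red-blue-only perfect matching is minimal. Starting from the stack matching, locate infinitely many disjoint excursions of the walk with the pattern ``two reds then two blues'', which occur infinitely often a.s. In each one, choose independently the nested or entwined pairing. This gives $2^{\aleph_0}$ distinct minimal matchings.

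\textbf{Case (i), $\gamma<0$.} Now the blue-red coefficient is $-a|x|^\gamma$ with $a=\infty$, so every blue-red edge has cost $-\infty$. A minimal matching must therefore consist entirely of blue-red edges: otherwise a finite rearrangement would create an extra $-\infty$ edge. Among such matchings all finite rearrangements tie. Existence follows from the reflected version of the stack construction. Uncountably many matchings then follow exactly as in (iii), by independently flipping nested versus entwined pairings on infinitely many disjoint ``two blues then two reds'' excursions.
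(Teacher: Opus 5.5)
Your route is the paper's in every part: the level (stack) matching $M_{-\infty}$ for (ii), independent nested/entwined flips on infinitely many disjoint ``two of one colour, then two of the other'' blocks for (i) and (iii), and order preservation plus unboundedness of the walk for (iv). In (iv) you are more self-contained than the paper. You get order preservation directly from strict convexity on the only configuration where both pairings are red-blue, instead of appealing to the finite-$a$ \cref{super}.

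\textbf{The gap.} You skip the step the paper proves first in (iv): a $(\gamma,\infty)$-minimal matching has no unmatched points. Your passage from ``no nested pair'' to $M(x_i)=y_{i+k}$ uses that $M$ is a bijection $R\to B$. But matchings in this paper may leave points unmatched, and convexity alone does not exclude an order-preserving red-blue partial matching. Every finite set of its edges is already optimal among red-blue rematchings, so it passes \cref{def:mm}(iii).

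The fix is a short exchange argument:
\begin{enumerate}[(a)]
\item Suppose a red point at $0$ is unmatched, and choose $x>0$ with $S(x)<0$.
\item The blues in $(0,x]$ then outnumber the reds there, so some blue $y\in(0,x]$ has no partner among the reds of $(0,x]$.
\item $y$ cannot be unmatched alongside the unmatched red at $0$, and only red-blue edges are allowed, so $M(y)=r'<0$.
\item Replacing $\langle r',y\rangle$ by $\langle 0,y\rangle$ strictly lowers the cost and leaves the number of unmatched points unchanged, a contradiction.
\end{enumerate}
An unmatched blue is handled symmetrically. The same tacit perfectness is used in your uniqueness claim in (ii).

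\textbf{Smaller points.}
\begin{itemize}
\item Your uniqueness sketch in (ii) only treats $M(y)<x$. The case $M(y)\in(x,y)$ is the image of that case under reflecting the line and swapping colours, which preserves red-blue edges and the stack matching; say so.
\item In the treated case no induction is needed. The interval $(x,y)$ contains $n$ reds and $n$ blues. If $M(x)\in(x,y)$, the $n$ reds of $(x,y)$ have only $n-1$ blues of $(x,y)$ available, and $y$ is taken by $x'$. So some red $r\in(x,y)$ has $M(r)>y$, and $\langle x',y\rangle,\langle r,M(r)\rangle$ are entwined. As you phrased it, a blue of $(x,y)$ matched across $x$ need not produce an entwined pair.
\item In (i), the assertion that every minimal matching consists only of blue-red edges is not needed for the statement and is not justified as written. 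With $-\infty+c=-\infty$, a perfect matching with a single red-blue edge passes every finite test, since any finite set of edges containing a blue-red edge already costs $-\infty$. Drop it.
\end{itemize}
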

\end{samepage}

Next we discuss the case where $a=0$, where every blue-red
edge has zero cost. Unlike $a=\infty$, where we minimize over matchings
consisting entirely of red-blue edges, in \cref{thm:a-zero}, we minimize over all possible
matchings.

\begin{samepage}
\begin{theorem}[Taking a=0]\label{thm:a-zero}
 Let $R$ and $B$ be Poisson point processes of intensity $1$ on $\R$.
 If we consider the cost function in \cref{cor:unbalcost_onedim} with
 $a=0$, the following holds.
 \begin{enumerate}[(i)]
     \item For $\gamma\in(-\infty,0)$, there is a unique $(\gamma,0)$-minimal matching which is locally   infinite and a factor.
\item For $\gamma=0$ the cost function is no longer scale invariant.
 However uncountably many non-scale invariant minimal matchings may be constructed.
 \item For $\gamma\in(0,\infty)\cup\{1^+\}\cup\{1^-\}$
 there are uncountably many $(\gamma,0)$-minimal matchings.
 \end{enumerate}
\end{theorem}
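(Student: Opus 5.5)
We will treat the three regimes of $\gamma$ separately, starting from the explicit cost at $a=0$. For $\gamma>0$ we have $f_{\gamma,0}(x)=x^\gamma$ for $x\ge 0$ and $0$ for $x<0$. For $\gamma<0$ we have $f_{-|\gamma|,0}(x)=-x^{-|\gamma|}$ for $x\ge 0$ and $0$ for $x<0$. In every case a blue-red edge costs nothing, and the whole question is how red-blue edges are priced.

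\emph{Case (i), $\gamma<0$.} Red-blue edges now have strictly negative cost, and shorter ones are more negative, while blue-red edges cost $0$. So a minimal matching wants as many red-blue edges as possible, and among these it favours short ones. The plan is to compare, in any finite configuration, the cost of swapping a blue-red edge for red-blue edges, and thereby show that the minimal matching is forced to be the following greedy rule. Repeatedly match each red point $x$ to the nearest still-unmatched blue point to its right, taking the globally shortest available red-blue pair first, as in the stable-matching construction of \cite[Lemma 8]{minimal}.

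Since $R$ and $B$ are independent intensity-one Poisson processes, the walk counting (red minus blue) is recurrent. Hence every point is eventually used, but blue points not matched rightward must be matched to red points far to their right through blue-red edges. We expect these blue-red edges to cross every point, which gives local infiniteness. Uniqueness follows because any two minimal matchings must agree on all red-blue edges, by a cyclic-exchange argument on finite subsets using strict negativity. Once the red-blue edges are fixed, the blue-red edges cost $0$, so we must still argue that they are determined. The route is to show that the residual structure leaves exactly one admissible completion. The factor property then follows because the construction is a deterministic, translation-equivariant rule. \textbf{We expect this case to be the main obstacle.} It requires proving existence and uniqueness of a perfect, locally infinite matching together with its factor property, and we will lean on the verbatim analogue of \cite[Theorem 1]{minimal} for existence.

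\emph{Case (ii), $\gamma=0$.} Taking the limit $\gamma\to0^+$ with $a=\alpha^\gamma$ and $\alpha=0$ is degenerate. Concretely, the cost $\log(a|x|)$ for $x<0$ with $a=0$ equals $-\infty$. We will show directly that no rescaling-invariant choice is possible, and then construct minimal matchings by hand. To do this, fix an arbitrary red point and assign it a blue-red edge of cost $-\infty$. Any local reordering that preserves a maximal number of such edges then does not change the cost. Choosing among these freely gives uncountably many matchings. These matchings depend on the arbitrary choice of scale and base edge, so they are not scale invariant.

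\emph{Case (iii), $\gamma>0$ (including $1^\pm$).} Every blue-red edge is free, and red-blue edges have positive cost. A matching consisting entirely of blue-red edges has all local costs $0$, so any such perfect matching is minimal. By Poisson recurrence there are infinitely many disjoint finite windows in which the blue-red-only matching can be chosen in at least two ways; for instance, two blues followed by two reds admits both the nested and the entwined blue-red pairing. Choosing independently in each window produces uncountably many minimal matchings. The point to check is that a perfect all-blue-red matching exists at all. For this we use a stable-type construction in which each blue point is matched to the nearest unmatched red point to its right, with perfectness following again from recurrence of the walk.
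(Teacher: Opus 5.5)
Your part (iii) matches the paper's argument. The all-blue-red matching $M_\infty$ (each blue to the nearest unmatched red on its right, perfect by recurrence) has zero cost, and flipping infinitely many disjoint nested blue-red pairs to entwined ones gives uncountably many minimal matchings. Parts (i) and (ii), however, contain genuine gaps.

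\textbf{Part (i).} Your greedy rule, run as a one-sided stable matching, is exactly the paper's answer $M_{-\infty}=\bigcup_j m^j_{rb}$. Your description of what it produces is wrong, though. By recurrence of $W$ in both directions, every blue point $b$ is matched to the last red point before $b$ at which $W$ steps up from $W(b)$ to $W(b)+1$. So the rule is already perfect: there are no leftover blue points and no blue-red edges at all. Local infiniteness therefore does not come from long blue-red edges. Instead, for any $z$ and every level $k\le W(z)-1$, the point $z$ lies inside an excursion of $W$ above $k$, and the endpoints of that excursion form an edge of $m^k_{rb}$ crossing $z$. More seriously, your uniqueness route (``the residual structure leaves exactly one admissible completion'') cannot work as stated. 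If a minimal matching contained infinitely many zero-cost blue-red edges, swapping nested blue-red pairs into entwined ones would preserve the cost, which is exactly how you obtain non-uniqueness in (iii). What you need is that a $(\gamma,0)$-minimal matching with $\gamma<0$ uses no blue-red edges. For instance, two blue-red edges on $b_1<r_1<b_2<r_2$ are strictly beaten by $\langle r_1,b_2\rangle,\langle r_2,b_1\rangle$. You also need that nested red-blue edges beat entwined ones, which follows from concavity of $\ell\mapsto-\ell^{\gamma}$. This is the content of the paper's brief argument singling out $M_{-\infty}$.

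\textbf{Part (ii).} You switch to reading the blue-red cost as $\log(0\cdot|x|)=-\infty$. This contradicts your own opening sentence and the paper's convention that $a=0$ means every blue-red edge costs $0$. Under that convention the cost at $\gamma=0$ is $0$ for $x<0$ and $\log x$ for $x\ge 0$. Non-scale-invariance is then witnessed by a configuration $b\,r\,b$: at small scales the red-blue edge has negative cost and is chosen, while at large scales it has positive cost and the zero-cost blue-red edge is chosen. You never give such an argument; ``we will show directly'' is not carried out. Under your $-\infty$ reading you could not give one. Blue-red edges would always win, so all competing matchings have the same number $m$ of red-blue edges, and rescaling by $k$ adds the same $m\log k$ to every total, leaving the comparison unchanged. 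Your closing sentence also conflates matchings depending on an arbitrary choice with failure of scale invariance. Scale invariance is a property of the cost function: whether rescaling a configuration changes its minimal matching. Finally, under the correct convention an all-blue-red matching is not minimal, since any red-blue edge shorter than $1$ has negative cost. The paper instead first matches along short red-blue edges, using a rightward greedy rule truncated at length $1$, and then completes the remaining points with blue-red edges as in $M_\infty$.
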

\end{samepage}

\cref{thm:a-zero,thm:a-infty} have been proven in \cref{sec:lims}. In
\cref{rem:lims}, we discuss why these results may seem slightly
counterintuitive.

Below we discuss the most interesting of the limits in $a$, the joint
limits of $a\rightarrow\{0,\infty\}$ and
$\gamma\rightarrow\pm\infty$. Before we do this, we introduce two
definitions below. Let $\alpha\in(0,\infty)$ be fixed.

\begin{definition}[$\alpha$-stable matching]\label{def:alphastab}
  A matching between sets $R$ and $B$ in $\R$ are said to be
  $\alpha$-stable if for any two points $x\in R$ and $y\in B$ not
  matched to each other, we have
  \[
    c(x,M(x)) \wedge c(M(y),y) \leq c(x,y)
  \] where
  \begin{equation}
    c(x,y) = [\mathbbm{1}_{x\leq
  y}+\alpha\mathbbm{1}_{x>y}]|x-y| = \begin{cases} \alpha|x-y|, &
                                                                  x>y, \\ |x-y|, & x\leq y.\end{cases}
  \end{equation}
\end{definition}

Now, if $\alpha=1$, this matching is the same as the stable matching
defined in \cite{galeshapley,minimal}. However for $\alpha\neq 1$ it
is different from every other matching that we have discussed so far.
One can think of this matching to be constructed by the following
procedure. Let us assume that at a given red point at $r$, there is an
interval $I_r^t$ of size $(1+1/\alpha)t$ at time $t$ growing around it given by
$I_r^t=(r-t/\alpha ,r+t)$. This red point at $r$ is matched to the first unmatched
blue point to enter its interval $I_r^t$. This results in a greedy matching where the preferences of the points are
given by $c(x,y)$ which prefers to match in some directions $\alpha$
times more than
others.
In the next definition, we construct the ``altruistic'' equivalent of the
``greedy'' definition above.

\begin{definition}[$\alpha$-altruistic matching] \label{def:alphaalt}
A matching between sets $R$ and $B$ in $\R$ is defined to be
$\alpha$-altruistic if it minimizes in dictionary ordering, the
  sequences of $c(x,y)$ in descending order, where $c(x,y)$ is given
  by
    \[c(x,y) = [\mathbbm{1}_{x\leq
  y}+\alpha\mathbbm{1}_{x>y}]|x-y| = \begin{cases} \alpha|x-y|, &
  y<x, \\ |x-y|, & x\leq y.\end{cases} \]
\end{definition}

Here, we aim to minimize the largest possible value of the cost of the
edge $\langle x,y \rangle$ where the cost of the edge is given by
$\alpha|x-y|$ for a blue-red edge and $|x-y|$ for a red-blue edge.

\begin{theorem}[joint limit]\label{thm:weirdlim} If we consider the
  $f_{\gamma,a}$-minimal matchings defined in
  \cref{cor:unbalcost_onedim}, the following hold for fixed $\alpha>0$.
  \begin{enumerate}[(i)]
  \item The $(\gamma,\alpha^\gamma)$-minimal matching is given by the
  $\alpha$-stable matching as $\gamma\rightarrow-\infty$.
  \item The $(\gamma,\alpha^\gamma)$-minimal matching is given by the
  $\alpha$-altruistic matching as $\gamma\rightarrow\infty$.
  \end{enumerate}
\end{theorem}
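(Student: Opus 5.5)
The key observation is that with $a=\alpha^\gamma$ the cost of an edge becomes a pure power of the weighted length $c$ from \cref{def:alphastab,def:alphaalt}. For a blue-red edge, $a|x-y|^\gamma=(\alpha|x-y|)^\gamma$, and for a red-blue edge it is $|x-y|^\gamma$. Hence
\[
f_{\gamma,\alpha^\gamma}(y-x)=c(x,y)^\gamma \quad (\gamma>0), \qquad f_{-|\gamma|,\cdot}(y-x)=-c(x,y)^{\gamma}\quad(\gamma<0),
\]
up to the sign convention of \cref{cor:unbalcost_onedim}. So the $(\gamma,\alpha^\gamma)$-minimal matching is exactly the $\ell^\gamma$-type minimal matching for the modified distance $c$. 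The plan is to rerun the arguments behind the balanced limits $\gamma\to\pm\infty$ from \cite{minimal} (in particular \cite[Lemma 8]{minimal} for the stable case and the altruistic characterization for $\gamma\to\infty$), with $|x-y|$ replaced by $c(x,y)$. I would interpret ``as $\gamma\to\pm\infty$'' in the same sense as there: on any finite configuration, and then via \cref{def:mm}(iii) on every finite subset of edges, the minimal matching for all sufficiently large $|\gamma|$ coincides with the limiting matching. I would also assume that a.s. all values $c(x,y)$ are distinct, which holds for Poisson points since $\alpha$ is fixed.

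For (ii), I fix a finite set of $n$ red and $n$ blue points and order the finitely many distinct values $c(x,y)$. Take two matchings $M\neq N$ whose descending sorted cost sequences first differ at position $k$, with $M$ smaller there. Dividing the total costs by the largest differing value raised to the power $\gamma$, the other terms contribute at most $n\,(\rho)^\gamma\to0$, where $\rho<1$ is the largest ratio between distinct values. Hence $f[M]<f[N]$ for $\gamma$ large. The minimal matching therefore converges to the lexicographic minimizer, which is the $\alpha$-altruistic matching. Passing to countable configurations is the same local argument: \cref{def:mm}(iii) only ever involves finite edge subsets.

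For (i), with $\gamma\to-\infty$, the cost is $-\sum c^\gamma$, which is dominated by the smallest edges. The same ratio argument shows that on finite sets the limiting minimizer maximizes, in lexicographic order, the ascending sequence of $c$-values. First I would show this is $\alpha$-stable. Suppose $x\in R$ and $y\in B$ are unmatched to each other with $c(x,y)<c(x,M(x))\wedge c(M(y),y)$. Swapping to include $\langle x,y\rangle$ and $\langle M(y),M(x)\rangle$ creates a strictly smaller $c$-value than both removed edges, which contradicts dominance by the smallest term. The converse is that any $\alpha$-stable matching of finitely many points is unique, and equals the greedy matching that repeatedly matches the globally $c$-closest available red-blue pair. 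This follows by induction on the closest pair, which stability forces to be matched, exactly as in the classical Gale--Shapley argument with the directional preference $c$.

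The main obstacle is the infinite-volume step in (i): showing that the $\alpha$-stable matching of the Poisson processes exists, is perfect and is unique, and that it is the limit of the finite minimal matchings in the sense of \cref{def:mm}(iii). For this I would first try the growing-interval construction described after \cref{def:alphastab}, with intervals $I_r^t=(r-t/\alpha,r+t)$. I would adapt the existence and uniqueness proof for stable matchings of Poisson processes from \cite{holroyd2003trees,minimal}. The mass-transport or ergodicity argument goes through unchanged, because the preferences are still translation invariant and generic.
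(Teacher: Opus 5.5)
Your proposal is correct and follows essentially the paper's route. With $a=\alpha^\gamma$ the edge cost is $\pm c(x,y)^\gamma$, the extreme term dominates, and this recovers the $\alpha$-stable (resp.\ $\alpha$-altruistic) matching. The infinite-volume existence step is deferred to the balanced theory; the paper does this by appealing to quasistability of the $\alpha$-stable matching (which holds with $\kappa=\max(\alpha,1/\alpha)$) and \cite[Proposition 26]{minimal}. Your general finite-configuration ratio argument and closest-pair uniqueness step are more complete than the paper's four-point sketch.

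One slip: for $\gamma\to-\infty$ the limiting minimizer lexicographically \emph{minimizes}, not maximizes, the ascending sequence of $c$-values. That is the direction your swap and greedy arguments actually use.
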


\begin{remark}
Note that in one dimension the $\alpha$-altruistic matching is the
same as the altruistic matching. This need not be the case in higher
dimensions. On the other hand, the $\alpha$-stable matching for
$\alpha\neq 1$ behaves
differently from the greedy matchings in all dimensions, including
$d=1$. An intuitive interpretation of the $\alpha$-stable matching in
higher dimensions is
given in \cref{rem:alphastab}.
\end{remark}

In the above theorems, we have listed all the possibilities for limits
of  $\gamma$ and $a$. Note that if $a$ increases faster than
$\alpha^\gamma$, the minimal matching will resemble the situation in
\cref{thm:a-infty} where $a=\infty$ and if $a$ grows slower than every $\alpha^\gamma$
then the minimal matching will resemble the situation in
\cref{thm:a-zero} where $a=0$.

\subsection{Discussion} \label{ssec:discussion}

\begin{figure}[!htb]
  \centering
  \begin{subfigure}{\textwidth}
    \includegraphics[width=0.81\textwidth,trim={1cm 0.2cm 2cm 1cm},clip]{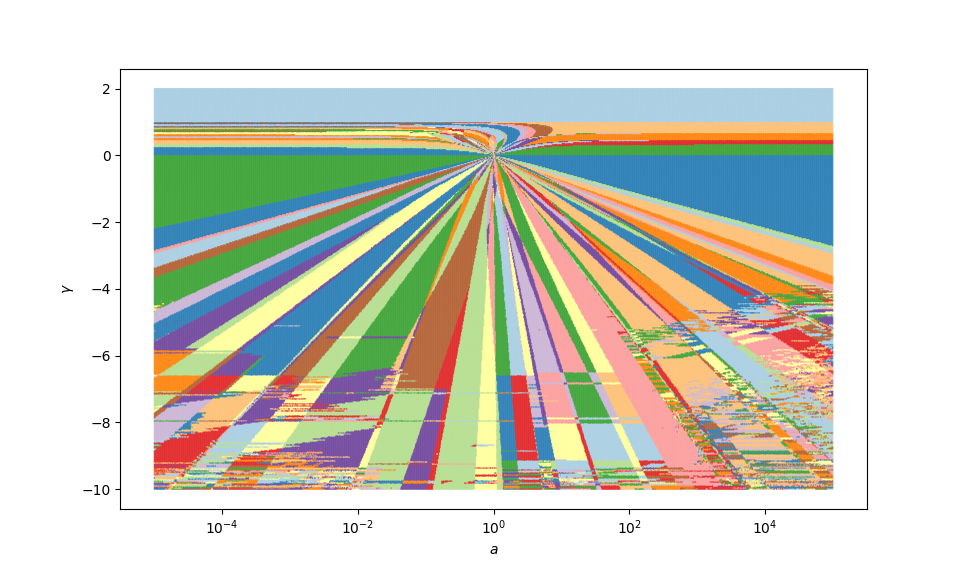}
    \caption{Scatterplot of $(\gamma,a)$-minimal
    matchings.}\label{fig:scatter}
  \end{subfigure}
    \begin{subfigure}{\textwidth}
    \includegraphics[width=\textwidth,height=6.3cm,trim={1.8cm 0.2cm 2.4cm
      1cm},clip]{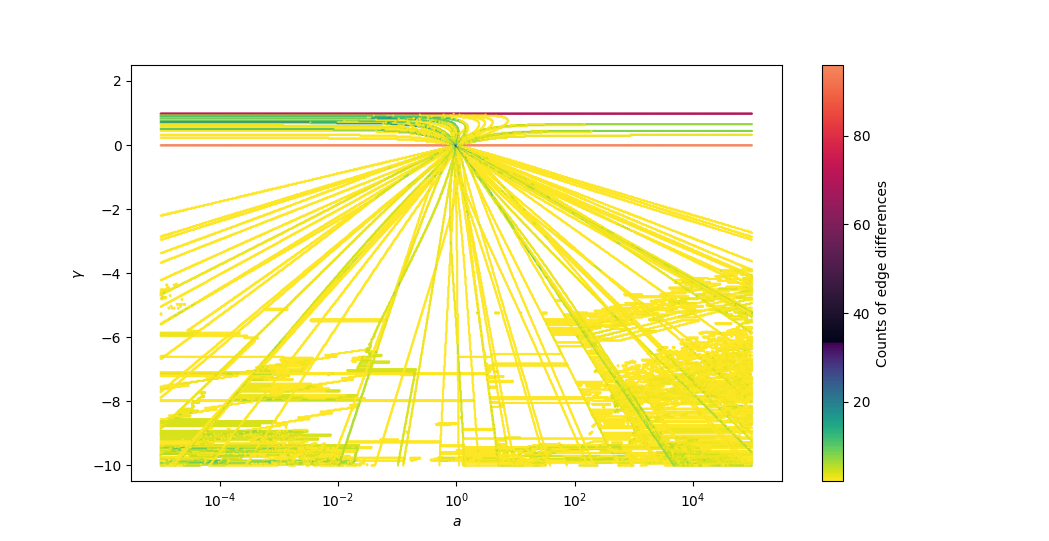}
    \caption{Counts of edge differences between minimal matchings.}\label{fig:diffs}
  \end{subfigure}
  \caption{ For the same $100$ red and $100$ blue points, we plot the
  distributions of the different $(\gamma,a)$-minimal matchings. In
 \cref{fig:scatter}, different squares of the same color denote the
  same matching for those values of $(\gamma,a)$. Note that $a$ has
  been plotted on the $x$-axis with a log scale.
  In \cref{fig:diffs}, we plot the counts of edge differences in the
  $(\gamma,a)$-minimal matchings for adjacent values of $\gamma$ and
  $a$.
  }\label{fig:cluster}
  \end{figure}

  \begin{figure}[!htb]
    \centering
    \begin{subfigure}{0.9\textwidth}
      \centering
      \includegraphics[width=\textwidth,trim={5cm 2cm 5cm 2.8cm},clip]{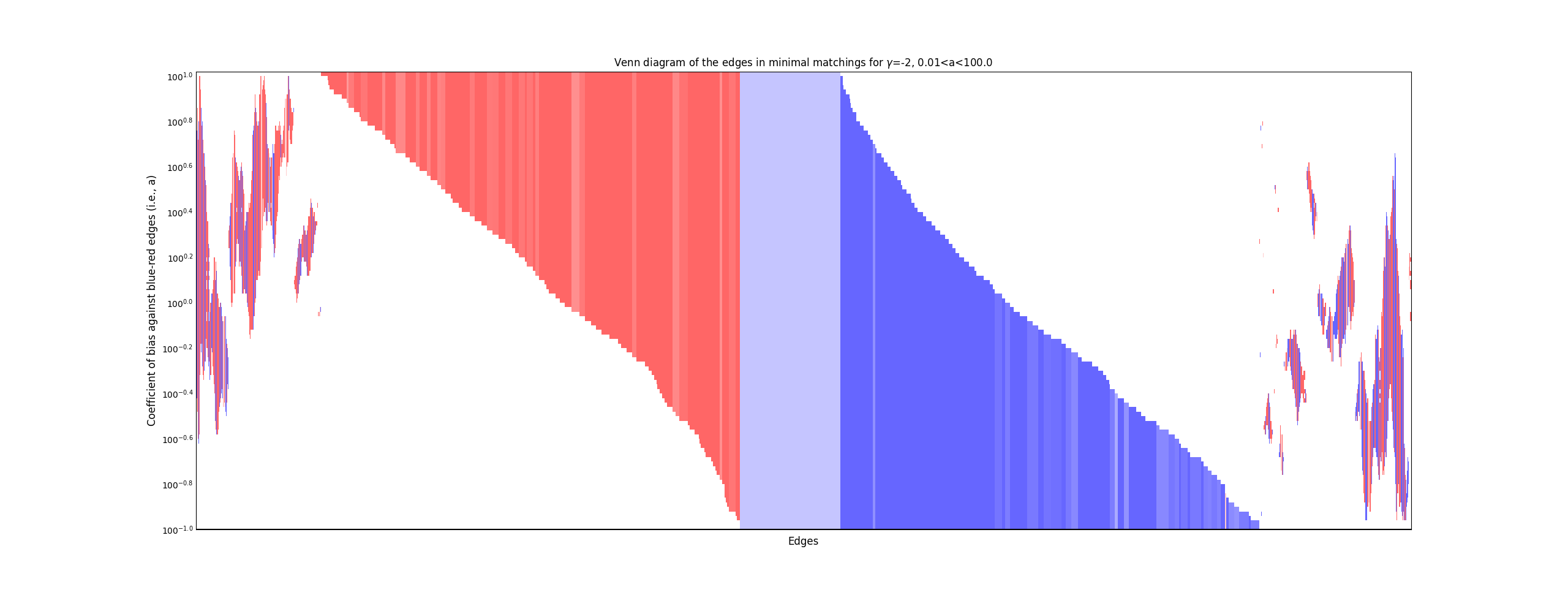}
      \caption{Venn diagram of edges for $\gamma=-2$ and $0.01<a<100$.}
\end{subfigure}

\begin{subfigure}{0.9\textwidth}
\includegraphics[width=\textwidth,trim={5cm 2cm 5cm 2.9cm},clip]{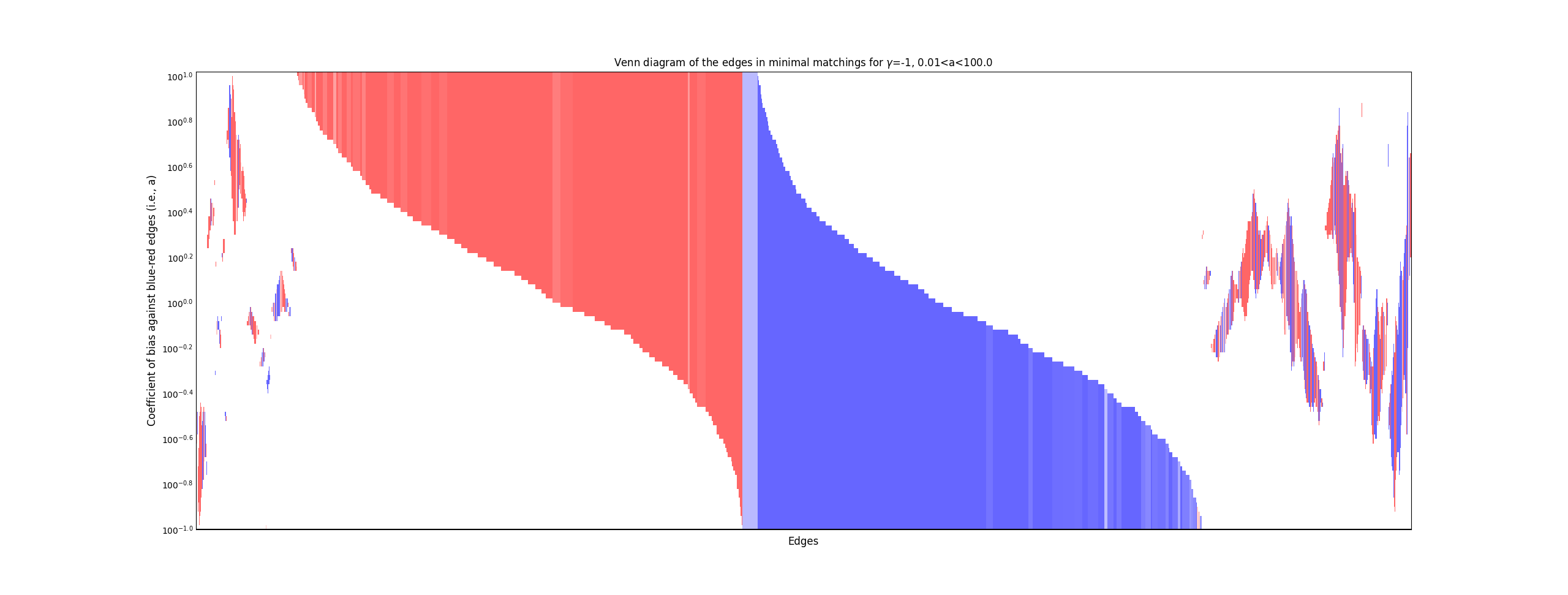}
    \caption{Venn diagram for $\gamma=-1$ and $0.01<a<100$.}
\end{subfigure}

\begin{subfigure}{0.9\textwidth}
\includegraphics[width=\textwidth,trim={5cm 2cm 5cm 2.8cm},clip]{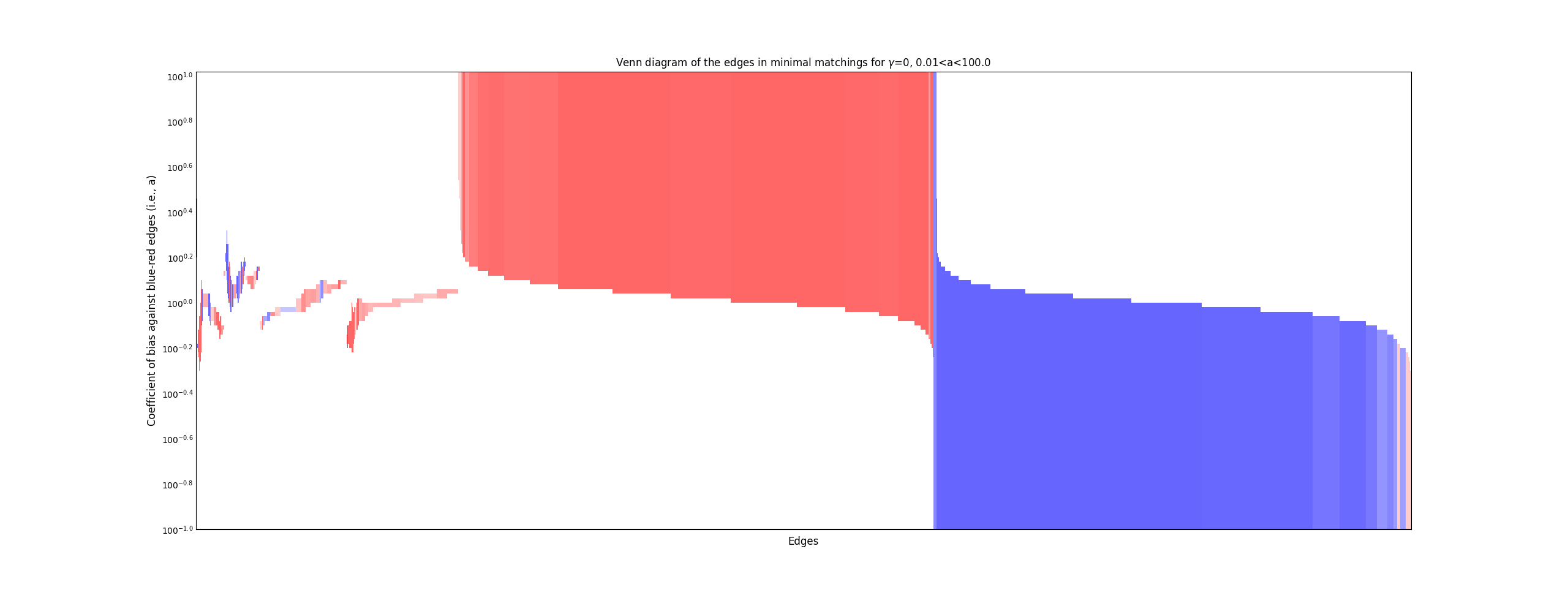}
    \caption{Venn diagram for $\gamma=0$ and $0.01<a<100$.}
\end{subfigure}
\caption{The above plots contain Venn diagrams of edges used in the $(\gamma,a)$-minimal
  matchings for the same $1000$ fixed red and blue points. Each column
  above represents a collection of the edges and the rows represent
  the minimal matchings for the different values of $a$. The columns
  have been colored based on the orientation of most of the edges in
  the columns. So the darker red columns consist entirely of red-blue
  edges and the lighter blue columns consist of slghtly more blue-red
  edges, but still many red-blue edges. It is easy to see that as we
  increase the value of $a$, for all values of $\gamma$, the matchings
  start to contain more red-blue edges. }\label{fig:vennbig}
\end{figure}

In this section, we briefly discuss some observations from our
results.

In \cref{fig:cluster}, we plot the $(\gamma,a)$-minimal matchings for
different values of the parameters of the cost function for the same $100$
red and $100$ blue points. For several values of
$(\gamma,a)$ with $\gamma\in(-10,2)$ and $a\in(10^{-5},10^{5})$. We
compute the $(\gamma,a)$-minimal matching and color the pixels to
ensure that each matchings is represented by a single color\footnote{Since
there are more than $900$ different minimal matchings, the color
palette we have used repeats colors for different matchings.}. It
can be seen that for supercritical $\gamma$, the minimal matching is
identical and it doesn't depend on $a$ or $\gamma$ as long as
$\gamma>1$. Even for subcritical values of $\gamma$, we
 observe that for several adjacent values
of $(\gamma,a)$, the minimal matchings are the same. When $a$ (on the
$x$-axis) is plotted on the $\log$ scale, matchings appear to be
distributed somewhat linearly in the parameters. This agrees with our
observation in \cref{thm:weirdlim}, where the $(\gamma,\alpha^\gamma)$-minimal
matching converges as $\gamma\rightarrow\infty$, since the lines in
\cref{fig:scatter}
correspond to values of $a$ such that $a=\alpha^\gamma$ for some
$\alpha>0$.

Since \cref{fig:scatter}, identifies different matchings but doesn't
classify how different these matchings are, in \cref{fig:diffs}, we
plot the number of edges differing between adjacent matchings in
\cref{fig:scatter}. Notice that most of the matchings we see with
$\gamma<1$, differ only by a few edges. Similarly for
$\gamma\in(0,1)$. This agrees with the construction of the $\alpha$
stable matchings in \cref{def:alphastab}, where we don't expect
slightly changing $\alpha$ to significantly alter the matching. The minimal
matchings significantly change at $\gamma=1$, which is
predicted in \cite[Theorem 2,3]{minimal}. The minimal matchings change
completely at $\gamma=0$ for values of $a$ away from $1$. This is
because of the argument in
\cref{sec:modelmainresults}, which tells us that it is not the
$(\gamma,a)$-minimal matchings, but instead the $(\gamma,a^\gamma)$-minimal
matchings which converge to the $(0,a)$-minimal matching as $\gamma\rightarrow
0$. Observe that the lines in \cref{fig:diffs} correspond to the
graphs of $u^\gamma + v^\gamma = (1+u+v)^\gamma + a$ for different
values of $u$ and $v$. These graphs represent the values of
$(\gamma,a)$ where the minimal matching changes exactly one pair of
edges (from nested edges to separate edges).

In \cref{fig:vennbig}, we consider the same $1000$ red and $1000$ blue points
and plot a venn diagram of the $(\gamma,a)$-minimal matchings. We
consider $\gamma\in\{-2,-1,0\}$ with a range of values of $a$ from 0.01 to 100 plotted on a
$\log$ scale. We see that for small values of $a$, the matchings
contain mostly blue-red edges and for larger values of a, they contain
mostly red-blue edges. We see that for the different values of
$\gamma$, the rates at which the blue-red edges are replaced on
increasing $a$ are different, and the fraction of edges used in all
the matchings are also different.
Furthermore, it appears that for most edges,
as we increase the value of $a$, once the edge stop being used, it is
not typically used again for a larger value of $a$. Similarly when we
reduce the value of $a$. For every edge, there is a set of the values
of $(\gamma,a)$ for which it is used in the minimal matching. We
believe that for most edge, this set is monotone in $a$ and in
$\gamma$.

\subsection{Outline of the paper}

The outline for the rest of the paper is as follows.

\begin{itemize}
\item In
\cref{sec:cost}, we provide a proof for \cref{thm:unbalcost}. The
proof is very analysis heavy and technical, the ideas used in this
proof are very different from the rest of the paper. In this section,
we show that any measurable
scale invariant cost function must be monotone (\cref{thm:monotone}),
continuous (\cref{thm:continuous}) and then introduce a multiplicative convolution
which is useful to show that these cost functions will be $C^1$. Then
we use some existing results to prove \cref{thm:unbalcost}, which
classifies all possible measurable scale invariant cost functions.

\item In \cref{sec:compwbal}, we provide a proof of \cref{compwbal}.
  The proof somewhat resembles the proof of
                        \cite[Theorem 3]{minimal}, which compares
                        balanced $\gamma$-minimal  matchings for two
                        different values of $\gamma.$
  A significant
  difference come from proving that the unbalanced
  $(\gamma,a)$-minimal matchings also satisfy  quasistability
  (\cref{quasi}).
 In \cref{rem:canonical}, we discuss some results from \cite{minimal}
  which also hold for unbalanced matchings.

\item
Finally in \cref{sec:lims}, we provide the proofs of
\cref{thm:a-infty,thm:a-zero,thm:weirdlim} which deal with the
limits of $a$ approaching $0$ or $\infty$.
\end{itemize}

\section{Cost Functions (Proof of \cref{thm:unbalcost}) }\label{sec:cost}

In \cite{minimal}, the authors consider matchings on $\mathbb{R}$,
assume a scale invariant cost function $f$ is symmetric,
continuously differentiable, non-decreasing and non-constant and then
show that there exist $a,b,\gamma\in\mathbb{R}$ such that
\[
  f(x) =
  \begin{cases}
    ax^\gamma +b, & \gamma\neq 0, \qquad \mbox{or} \\
    a\log x + b, & \gamma = 0.\\
  \end{cases}
\]
We extend this result in \cref{thm:unbalcost} to find all measurable  scale invariant cost
functions on $\mathbb{R}^d$.

The proof is split into several propositions. In \cref{thm:monotone},
we show that any scale invariant cost
function must be monotone and in \cref{thm:continuous}, we show it must
be continuous. Then, %we use a result from \cite{minimal} to conclude
%what the function should look like.
to use \cref{lem:sc-inv} (analogous to  \cite[Proposition
18]{minimal} but for the unbalanced cost), we also need to
show that this function is $C^1$, which we do by introducing a multiplicative
convolution and an approximate identity to recover the cost functions (in
\cref{convolution} and
\cref{approxid}).  Finally, we prove \cref{thm:unbalcost}.

We consider a red point at the origin
and  try
to deduce what a scale invariant cost function can look like along the positive $x$-axis.
A similar analysis can be used for any other ray $\{k \mathbf{v}:k>0\}$ along a given
vector $\mathbf{v}$.
Let $f:(0,\infty)\rightarrow\mathbb{R}$ be a
restriction of the cost function along the positive $x$-axis. In
\cref{thm:monotone}
 below, we show that the cost function must be monotone along rays.

\begin{proposition}[monotone]\label{thm:monotone}
  A measurable scale invariant cost function defined along a ray $f:(0,\infty)\rightarrow \mathbb{R}$ must be
  monotone.
\end{proposition}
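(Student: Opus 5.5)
We read scale invariance as a statement about all finite configurations. If $f$ is scale invariant, then for every finite configuration of red and blue points, the identity ``$M$ is $f$-minimal'' is preserved under every dilation $x\mapsto kx$, $k>0$. Equivalently, the functions $f$ and $f_k(r):=f(kr)$ induce the same preference between any two matchings that differ on finitely many edges. Since $f$ is restricted to a ray, we place all points on the positive $x$-axis, oriented so that every edge under consideration is a red-to-blue displacement pointing along the ray. The simplest comparison uses two red points at $0$ and $u$ and two blue points at $v$ and $w$, with $0<u<v<w$. The two perfect matchings are the nested one, with cost $f(w)+f(v-u)$, and the entwined one, with cost $f(v)+f(w-u)$. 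Scale invariance says that the sign of
\[
D(u,v,w)=f(w)+f(v-u)-f(v)-f(w-u)
\]
is the same as the sign of $D(ku,kv,kw)$ for every $k>0$. It is cleaner to write this in terms of gap lengths: with $x=u$, $y=v-u$, $z=w-v$, the sign of $f(x+y+z)+f(y)-f(x+y)-f(y+z)$ is invariant under dilation.

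The plan is to argue by contradiction. Suppose $f$ is not monotone. Then there are $s<t<s'$ with either $f(t)>\max(f(s),f(s'))$ or $f(t)<\min(f(s),f(s'))$. Measurability enters through Lusin's theorem (or Steinhaus' theorem applied to level sets): on a set of positive measure, $f$ is close to continuous. This gives intervals on which these strict inequalities persist for a positive-measure set of points.

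Next we design configurations in which non-monotonicity forces a sign of $D$ that changes under scaling. In a configuration with a single red point and two blue points at distances $p<q$, matching the red point to the nearer blue point costs $f(p)$ and to the farther one costs $f(q)$. Together with an extra red point, this forces an ordering of the increments $f(p)-f(q)$. The key tool is the following. If $D$ has constant sign on a whole dilation orbit, then the function $\phi(\lambda)=f(e^\lambda)$ satisfies a sign condition on its second-difference-like expressions that is invariant under additive shifts of $\lambda$. Using the ergodicity of shifts on $\R$, by Steinhaus, we can find $\lambda$ where the sign is positive and a shift where it is negative, unless $\phi$ has a consistent sign of increments. Concretely, choose $y$ tiny relative to $x$ and $z$. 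Then $D\approx f(x+z)-f(x)-f(z)+f(y)$, and varying the ratio along with the scale shows that the sign of $f(\lambda r)-f(r)$ for fixed $\lambda>1$ cannot change with $r$. Otherwise we rescale so that the increment sign differs between two scales and build a configuration whose preferred matching flips, contradicting scale invariance. Having a fixed sign of $f(\lambda r)-f(r)$ for every $\lambda>1$, uniformly in $r$, is monotonicity. Finally we check that the sign is the same for all $\lambda$ by taking products of ratios, since $f(\lambda\mu r)-f(r)$ telescopes.

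The main obstacle is the middle step. We need configurations in which the sign of a single increment $f(\lambda r)-f(r)$ is isolated, rather than only a four-term combination, and we need to handle the measure-zero exceptional sets coming from measurability. We would first try limiting configurations in which one gap tends to $0$ or $\infty$. Since no continuity is available yet, those limits must be replaced by choosing points in density-one sets supplied by Lusin/Steinhaus, and this is where the technical care lies.
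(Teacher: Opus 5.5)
\textbf{There is a genuine gap.} The step you flag as the main obstacle is actually free, while the step you treat as a formality carries the whole proof.

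\emph{The "main obstacle" is free.} You already mention the configuration that isolates a single increment: one red point and two blue points at distances $p<q$ along the ray. By Definition~\ref{def:mm}(ii), its minimal matching is the single cheaper edge. Scale invariance therefore gives directly that $f(p)<f(q)$ if and only if $f(kp)<f(kq)$, for every $k>0$. For this you need neither the four-point configuration, nor the limit $y\to 0$ (which would require continuity you do not yet have, and $f(0^+)$ may be $-\infty$ anyway), nor any Lusin-type regularisation. This is exactly how the paper starts. Assuming non-monotonicity, it takes $r<1<s$ with $f(1)<f(r)$ and $f(1)<f(s)$, and obtains $f(k)<f(kr)$ and $f(k)<f(ks)$ for all $k>0$.

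\emph{The gap is your last step.} Suppose the sign $\sigma(\lambda)$ of $f(\lambda x)-f(x)$ is known to be independent of $x$. Telescoping then shows only that $P=\{\lambda>0:\sigma(\lambda)>0\}$ is closed under multiplication and that $\sigma(1/\lambda)=-\sigma(\lambda)$. This does not make $\sigma$ constant on $(1,\infty)$. If $\sigma(2)>0>\sigma(3)$, then $P$ contains the dense set $\{2^a3^{-b}\}$ and $P^{-1}$ contains $\{2^{-a}3^{b}\}$, with no algebraic contradiction. The non-measurable Hamel-basis examples mentioned in the remark after this proposition satisfy all such constraints without being monotone. So this is precisely where measurability must be used. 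Your Steinhaus/ergodicity paragraph never says which set is shown to be null or conull, or why.

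\emph{How the paper supplies it.} If $\log_s r=-m/n$ is rational, the chain $f(1)<f(r)<\cdots<f(r^n)=f(s^{-m})<\cdots<f(s^{-1})<f(1)$ is already contradictory. Otherwise $\{r^ns^m\}_{n,m\in\mathbb{N}}$ is dense. The level sets $\Sigma_+=\{f>f(1)\}$ and $\Sigma_-=\{f<f(1)\}$ satisfy $r\Sigma_+\cup s\Sigma_+\subseteq\Sigma_+$ and $r^{-1}\Sigma_-\cup s^{-1}\Sigma_-\subseteq\Sigma_-$, and a Lebesgue density argument shows each is null or conull. Conull is impossible, because $x\mapsto 1/x$ maps $\Sigma_+$ into $\Sigma_-$ and vice versa, and it preserves null sets. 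If both are null, then $f=f(1)$ a.e., whereas $r\Sigma\subseteq\Sigma_+$ would have positive measure. In logarithmic coordinates, the fact you need is this: a measurable $L\subseteq\mathbb{R}$ with $L+t\subseteq L$ for all $t$ in a dense additive semigroup is null or conull. That follows from continuity of translation in $L^1_{\mathrm{loc}}$.
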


\begin{figure}%\label{fig:monotone}
  \centering
  \begin{tikzpicture}
%    \filldraw [red] (0,0) circle (2pt);
          \draw[mark=x, color=red,thick,mark options={scale=1.5}] plot coordinates {(0,0)};
    \filldraw [blue] (2,0) circle (2pt);
    \filldraw [blue] (3,0) circle (2pt);
    \filldraw [blue] (4.5,0) circle (2pt);
    \draw[<->] (0,-1/4+0.07) -- (2,-1/4+0.07) node[midway,below]{$r$} ;
    \draw[<->] (0,-1/2-0.02) -- (3,-1/2-0.02) node[midway,below]{$1$} ;
    \draw[<->] (0,1/4) -- (4.5,1/4) node[midway,above]{$s$} ;
  \end{tikzpicture}
  \caption{A configuration of red and blue points used to show
  monotonicity.}\label{fig:monotone}
\end{figure}

\begin{proof}[Proof]
  Let the set $R$ include one red point and the set $B$ include three
  blue points, distances $r<1<s$ from the red point along the same
  direction as seen in \cref{fig:monotone}.
  The minimal matching will have
  the edge with the smallest cost and two unmatched blue points. Let's
  assume our cost function isn't monotone and that the smallest out of
  the three costs
  $f(r), f(1)$ and $f(s)$ is $f(1)$. A similar, mirrored argument can
  be used to ensure we cannot have a local maxima at $f(1)$. Scale invariance of the cost
  function implies  \begin{equation*}
  \forall k\in(0,\infty),\qquad   f(k)<f(kr) \mbox{ and } f(k)<f(ks).
\end{equation*}

  Now, if $\log_s r<0$ is rational, say $-m/n$ where $m,n\in\N$, then we have
  $s^{-m} = r^n$. Due to scale invariance of $f$ we  have
  \begin{align*}
    f(1)<f(r)<\cdots<f(r^{n-1})<f(r^n)=f(s^{-m})
   % <f(s^{-m+1})
    <\cdots<f(s^{-1})<f(1),
  \end{align*}
  leading to a contradiction.

  Therefore $\log_s r$ must be irrational. This means that numbers of the
  form $\{r^n s^m\}_{n,m\in\mathbb{N}}$ are dense in $(0,\infty)$. We
  can show this by considering an interval $(k,l)\subset\mathbb{R}$
  and finding $m,n\in\mathbb{N}$ such that $$ \log_s k < m + n \log_s
  r < \log_s l. $$ This is always possible for positive $m$ and $n$
  because $n\log_s r$ is negative and the fractional part $\{n\log_s
  r\}_{n\in\mathbb{N}}$ is dense in $(0,1)$ since $\log_s  r$ is
  irrational. The above equation ensures that $k<r^n s^m<l$, so
  numbers of the form $\{r^m s^n\}_{m,n\in\mathbb{R}}$ are dense in
  $\mathbb{R}$.

  Now, we define the following three sets, which form a partition of
  $(0,\infty)$,
  \begin{align*}
    \Sigma_+ &= \{x\in(0,\infty):f(x)>f(1)\},\\
    \Sigma_-& = \{x\in(0,\infty):f(x)<f(1)\}, \\
    \Sigma &= \{x\in(0,\infty):f(x)=f(1)\}.
  \end{align*}
  These three sets must be measurable
  because $f$ is measurable.

  Let $m$ denote the Lebesgue measure. For a measurable set $A$, the
  Lebesgue density theorem \cite{realanalysis_steinshakarchi} defines
  the Lebesgue density of a set $A$ around a point $x$, given by
  $D_A(x)$ in the following way
  \begin{equation}
    \label{eq:ldt}
    D_{A }(x) := \lim_{\epsilon\rightarrow 0} \frac{m(A\cap
    B(x,\epsilon))}{m(B(x,\epsilon))} =  0 \mbox{ or } 1 \mbox{ a.e.
    }x\in\mathbb{R},
  \end{equation}
  and states that $m(A\Delta D(A))=0$, where $D(A) = \{x:D_A(x) =1\} $ denotes the
  set of density points for the set $A$. Intuitively, \cref{eq:ldt} tells us that
  for a measurable set, almost all the points are either  ``inside''
  the set, i.e., $D_A(x)=1$, or
  ``outside'' the set, i.e., $D_A(x)=0$. Furthermore, the set of points ``inside'' the set almost
  coincide with the set itself.
  To proceed with the proof, we consider the following three cases
  separately. Either the set ${\Sigma_+}$ has a density point, or
  the set
  ${\Sigma_-}$ has a density point, or neither of them has any
  density points.

  {\bf Case 1:} Let the set ${\Sigma_+}$ have a density point given
  by $x_0$, i.e., $D_{\Sigma_+}
  (x_0)=1$.
  We show that the point $rx_0$ must also be a density
  point of ${\Sigma_+}$ below.
\begin{leftbar}
  \begin{claim}\label{cl1}
    If $D_{\Sigma_+}(x_0)=1$ then $D_{\Sigma_+}(rx_0)=1$.
  \end{claim}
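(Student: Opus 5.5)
The plan is to use the scale-invariance inequalities established in the proof so far. Since $f(1)$ is the smallest of $f(r),f(1),f(s)$, scale invariance gives $f(k)<f(kr)$ for every $k>0$. Taking $k=x$ shows that whenever $f(x)>f(1)$ we get $f(rx)>f(x)>f(1)$. This yields the inclusion $r\Sigma_+\subseteq\Sigma_+$, that is, $x\in\Sigma_+\Rightarrow rx\in\Sigma_+$. Equivalently,
\[
\Sigma_+\subseteq r^{-1}\Sigma_+ := \{y: ry\in\Sigma_+\}.
\]

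Next I transfer density through the linear map $\phi(x)=rx$. This map sends the ball $B(x_0,\epsilon)$ bijectively onto $B(rx_0,r\epsilon)$ and scales Lebesgue measure by exactly $r$ (we are in one dimension along the ray). It follows that
\[
\frac{m(\Sigma_+\cap B(rx_0,r\epsilon))}{m(B(rx_0,r\epsilon))}\;\ge\;\frac{m(r\Sigma_+\cap B(rx_0,r\epsilon))}{m(B(rx_0,r\epsilon))}=\frac{r\,m(\Sigma_+\cap B(x_0,\epsilon))}{r\,m(B(x_0,\epsilon))}.
\]
The inequality uses $r\Sigma_+\subseteq\Sigma_+$, and the equality is the change of variables $y=rx$. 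Note that $r\Sigma_+$ is measurable, being the image of a measurable set under a linear bijection.

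Finally I let $\epsilon\to0$. The right-hand side tends to $D_{\Sigma_+}(x_0)=1$, and the left-hand side is at most $1$. Since $\epsilon\mapsto r\epsilon$ is a bijection of small radii, the limit of the left-hand side is exactly $D_{\Sigma_+}(rx_0)$. This gives $D_{\Sigma_+}(rx_0)=1$.

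There is no real obstacle here. The only points needing care are checking the inclusion direction ($r\Sigma_+\subseteq\Sigma_+$, not its reverse) and noting that dilation preserves density ratios with balls centered at the image point.
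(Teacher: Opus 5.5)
Your proof is correct and takes essentially the same route as the paper. The paper also uses $f(rx)>f(x)>f(1)$ to get $r(\Sigma_+\cap B(x_0,\epsilon))\subseteq \Sigma_+\cap B(rx_0,r\epsilon)$, compares measures using the dilation factor $r$, and lets $\epsilon\to 0$ with the ratio squeezed between the density ratio at $x_0$ and $1$; your remarks on the measurability of $r\Sigma_+$ and the reparametrization of radii $\rho=r\epsilon$ only make explicit details the paper leaves implicit.
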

  \begin{proof}
    If $x\in\Sigma_+\cap B(x_0,\epsilon)$ then $rx\in
    B(rx_0,r\epsilon)$ and $rx\in\Sigma_+$ since $f(rx)>f(x)>f(1)$.
    This implies
    \begin{align*}
      r \left( \Sigma_+ \cap B(x_0,\epsilon) \right) & \subseteq \Sigma_+\cap
      B(rx_0,r\epsilon)  \\
      \implies r m(\Sigma_+ \cap B(x_0,\epsilon)) & \leq m (\Sigma_+\cap
      B(rx_0,r\epsilon)).
    \end{align*}
    Therefore,
    \begin{equation*}
      \label{eq:1}
     1\geq \lim_{\epsilon\rightarrow 0} \frac{m( \Sigma_+\cap
      B(rx_0,r\epsilon))}{m(B(rx_0,r\epsilon))} \geq
      \lim_{\epsilon\rightarrow 0} \frac{rm(\Sigma_+\cap
      B(x_0,\epsilon)}{rm(B(x_0,\epsilon)} = 1,
    \end{equation*}
   proving \cref{cl1}.\end{proof}
\end{leftbar}
  Similarly we can show that $sx_0$ is also a density point for
  ${\Sigma_+}$, i.e., $D_{\Sigma_+}(sx_0)=1$. Furthermore, the above  argument can be applied iteratively to show that
 for any $m,n\in\N$, we have $D_{\Sigma_+}(r^n s^m x_0)=1$ implying that
 the set of points given by
 $\{r^n s^m x_0:n,m\in\N\}$  are all density points of $\Sigma_+$.
  This gives us a dense collection of density points of the set
  $\Sigma_+$.
  In \cref{cl2} below, we show that given a dense collection of density points of the
  set ${\Sigma_+}$, it must be true that every point in $(0,\infty)$ must be
  a density point of $\Sigma_+$.
 \begin{leftbar}
 \begin{claim}\label{cl2}
    Consider any $y\in(0,\infty)$, we have $D_{\Sigma_+}(y)=1$.
  \end{claim}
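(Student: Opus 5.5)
Every $y\in(0,\infty)$ is a density point of $\Sigma_+$; I would prove this by turning the scaling property behind \cref{cl1} into a small-scale statement and combining it with the density of the set $S=\{r^n s^m x_0:n,m\in\N\}$. The key observation, as in \cref{cl1}, is that $\Sigma_+$ is closed under multiplication by $r$ and by $s$. Indeed, if $f(x)>f(1)$, then scale invariance gives $f(rx)>f(x)>f(1)$ and $f(sx)>f(x)>f(1)$. Hence $\Sigma_+\supseteq r^n s^m\Sigma_+$ for all $n,m\in\N$.

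Fix $y>0$ and $\delta>0$. I would use the density point $x_0$ directly rather than iterating \cref{cl1}. Choose $\epsilon>0$ with $m(\Sigma_+\cap B(x_0,\epsilon))\geq(1-\delta)\,2\epsilon$. For every $n,m$ the scaled set $r^ns^m(\Sigma_+\cap B(x_0,\epsilon))$ lies in $\Sigma_+\cap B(r^ns^mx_0,r^ns^m\epsilon)$. So around every point $z=\lambda x_0$ with $\lambda=r^ns^m$, the ball $B(z,\lambda\epsilon)=\lambda B(x_0,\epsilon)$ has relative $\Sigma_+$-measure at least $1-\delta$. These balls have radius $\epsilon z/x_0$, which is proportional to their centre. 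This is the useful uniformity: near $y$, every ball $B(z,\epsilon z/x_0)$ with $z\in S$ is at least $(1-\delta)$-full.

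Using the density of $S$ (established in the proof via irrationality of $\log_s r$), I would pick points $z\in S$ arbitrarily close to $y$, say $|z-y|<\eta$. Then $B(y,\rho)\subseteq B(z,\epsilon z/x_0)$ as soon as $\rho+\eta<\epsilon z/x_0$. This gives a fixed neighbourhood of $y$, of radius about $\epsilon y/x_0$, in which $\Sigma_+$ has relative measure at least $1-\delta$. It does not yet give density tending to $1$ as the radius shrinks, because the balls coming from a fixed $\epsilon$ have radius bounded below.

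That is the main obstacle: the scaled balls near $y$ have radius comparable to $y\epsilon/x_0$, so a single $\epsilon$ never controls small balls around $y$. To handle it, I would let $\epsilon\to0$ together with $\delta\to0$. For each small $\epsilon$, by density of $S$, choose $z\in S$ with $|z-y|<\epsilon z/(2x_0)$. Then $B(y,\epsilon z/(2x_0))\subseteq B(z,\epsilon z/x_0)$, and the smaller ball has at least half the radius of the larger one. Therefore
\[
m\bigl(B(y,\rho)\setminus\Sigma_+\bigr)\leq 2\delta(\epsilon)\cdot\epsilon z/x_0 \quad\text{with } \rho=\epsilon z/(2x_0),
\]
so the relative complement in $B(y,\rho)$ is at most $2\delta(\epsilon)\to0$. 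This controls a sequence of radii $\rho\to0$. Since these radii can be chosen to fill all small scales (any $\rho$ close to $\epsilon y/(2x_0)$ works), the density along every sequence of radii is bounded below by $1-O(\delta)$. Hence $D_{\Sigma_+}(y)=1$.

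With \cref{cl2} in hand, every point of $(0,\infty)$, including $1$ itself, is a density point of $\Sigma_+$. By the Lebesgue density theorem, $\Sigma_+$ then has full measure in $(0,\infty)$. In the same way, the contradiction in Case 1 should come from $r\Sigma_-\subseteq\Sigma_-$: one argues that $\Sigma_-$ and $\Sigma$ are null while $\Sigma_+$ is conull, and then contradicts this by constructing points like $x$ with $f(x)<f(1)$ near values forced by scaling. The present claim itself only needs the uniform-scale argument above.
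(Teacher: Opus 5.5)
Your proof is correct, but it takes a different route from the paper's. The paper argues qualitatively. It uses \cref{cl1} only to know that every $r^ns^mx_0$ is a density point of $\Sigma_+$. To each such point $x_k$ near $y$ it attaches its own radius $\epsilon_k$, below which $\Sigma_+$ has relative measure above $1-\delta$. It then bounds $m(\Sigma_+^C\cap B(y,\tilde\epsilon(1-\delta)))$ using a finite subcover with overlap at most two.

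That step tacitly assumes the balls $B(x_k,\epsilon_k)$ cover $\overline{B(y,\tilde\epsilon(1-\delta))}$. Density of the centres alone does not give this, since the $\epsilon_k$ could shrink to $0$. Indeed, a dense set of density points does not force full measure: think of an open dense set of small measure.

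Your observation that $\lambda\Sigma_+\subseteq\Sigma_+$ for $\lambda=r^ns^m$ transports the defect of $\Sigma_+$ in $B(x_0,\epsilon)$ exactly to $B(\lambda x_0,\lambda\epsilon)$. So the good radius at a centre $z\in S$ is $\epsilon z/x_0\approx\epsilon y/x_0$, uniformly for $z$ near $y$. That is precisely the uniformity the paper's cover needs but never states. With it, a single ball around one nearby $z$ suffices, so you need neither the covering argument nor the statement of \cref{cl1}, only the inclusion behind it.

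Your passage from a sequence of radii to all radii is loosely phrased but is easily made exact. For small $\rho$, put $\epsilon=2x_0\rho/y$ and pick $z\in S$ with $|z-y|<\rho/3$. Then $B(y,\rho)\subseteq B(z,\epsilon z/x_0)$ and the ratio of the radii is below $7/3$. Hence the relative measure of $B(y,\rho)\setminus\Sigma_+$ is at most $7/3$ times that of $B(x_0,\epsilon)\setminus\Sigma_+$, which tends to $0$ as $\rho\to 0$.

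One correction to your closing aside, which is not needed for the claim: $\Sigma_-$ is invariant under multiplication by $r^{-1}$ and $s^{-1}$, not by $r$. For instance, $1/r\in\Sigma_-$ while $1\notin\Sigma_-$. The paper instead finishes Case 1 via $x\mapsto 1/x$, which maps $\Sigma_+$ into $\Sigma_-$.
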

  \begin{proof}
    Arbitrarily choose $\tilde\epsilon>0$ and $\delta>0$ small.

    Consider the ball $B(y,\tilde\epsilon)$. Let
    $\{ r^{n_k}s^{m_k}x_0\}_{k\in\N} $ be dense in $B(y,\te)$. For
    simplicity, let $x_k = r^{n_k} s^{m_k}x_0$.    For each $k\in\N$, let
    \begin{align}
      \label{eq:epsilons}
      \e_k = \sup\{\e: & \quad B(x_k,\e)\subseteq B(y,\te) \qquad \mbox{ and
      }\nonumber \\
     & \quad 1-\delta < \frac{m(\Sigma_+\cap
      B(x_k,\e'))}{m(B(x_k,\e'))} <1,  \qquad   \forall \e'<\e \}.
    \end{align}
In \eqref{eq:epsilons}, for each $x_k$, we choose an $\e_k$ so that the ball
    $B(x_k,\e_k)$ lies in $B(y,\tilde \e)$ and ``most'' points in
    $B(x_k,\e_k)$ are in $\Sigma_+$.
    Notice that $\{ B(x_k,\e_k)\}_{k\in\N} \subseteq B(y,\te)$ is a cover of
    $\overline{B(y,\te (1-\delta))}$ which is a compact set. Consider
    a finite subcover that is also minimal (so that each point is
    covered by at most two balls)\footnote{ we can do this because any ray $\{k
    \mathbf{v}:k>0\}$ is in $\R$.}, given by
    $\{B(x_{k_i},\epsilon_{k_i})\}_{i=1}^n$. For simplicity, let $B_i
    = B(x_{k_i},\epsilon_{k_i}) .$
    From our choice of $\e_k$, we know that for all $i$, most of the
    points in $B_i$ are in $\Sigma_+$ so there must be few in its
    complement. That is,
    \begin{align*}
      m(\Sigma_+\cap B_i)&\geq (1-\delta) m(B_i)\\
    \implies  m(\Sigma_+^C\cap B_i)&\leq \delta m(B_i).
    \end{align*}
This along with union bound and the minimality of the cover implies
     $$ m(\Sigma_+^C \cap (\cup_{i=1}^n B_i))\leq
    \delta\sum_{i=1}^n m(B_i)\leq \delta 2 m(B(y,\te (1-\delta))).$$
Therefore, we have \[\frac{m(\Sigma_+^C \cap B(y,\te
        (1-\delta)))}{m(B(y,\te (1-\delta)))} \leq 2\delta.\]    Since $\delta$ was arbitrarily chosen, we can let $\e \rightarrow
        0$ to get $D_{\Sigma_+^C}(y) =0$ implying
    that $y$ must be a density point of $\Sigma_+$, proving the claim.\end{proof}

\end{leftbar}

Now we have shown that if there is one density point $x_0$ for the set
    $\Sigma_+$,  every point  $y\in(0,\infty)$ must also be a density point of
    $\Sigma_+$. % Therefore, $(0,\infty)\in D(\Sigma_+)$.
    From Lebesgue Density Theorem we also know that the
    symmetric difference of $\Sigma_+$ and
    $D(\Sigma_+)$, must have measure zero, so we must have
    $y\in\Sigma_+$ for a.e. $y\in(0,\infty)$.
    This gives us a contradiction because if a.e. $x\in(1,2)$ had
    $x\in\Sigma_+$, we would have a.e. $x\in(1/2,1)$ have
    $x\in\Sigma_-$. Since $f(x)>f(1)\implies f(1)>f(1/x)$ due to
    scale invariance and because locally Lipschitz functions (like
    $x\rightarrow 1/x$ on $(0,\infty)$) map null sets to null sets. %continuous and differentiable
    %functions like $1/x$ preserve sets of measure zero.

    {\bf Case 2:} There exists $x_0\in\R$ such that
    $D_{\Sigma_-}(x_0)=1$.

    We follow a similar proof as above and show $\{r^{-n}
    s^{-m}x_0\}_{m,n\in\N}$ are dense in $(0,\infty)$ and consequently
    $y\in\Sigma_-$ for a.e. $y\in(0,\infty)$ resulting in a contradiction.

    {\bf Case 3:} Neither $\Sigma_+$ nor $\Sigma_-$ have any density
    points. Due to Lebesgue density theorem, these two sets must have
    measure zero.  So a.e. $y\in(0,\infty)$ must be
    contained in the set $\Sigma=\{x\in(0,\infty):f(x)=f(1)\}$,
    implying that $f$ is a constant almost everywhere.
     Now, $r\Sigma = \{rx:x\in\Sigma\} $ is a subset of $ \Sigma_+$ since
    $f(rx)>f(x)=f(1)$ which means $m(\Sigma_+)\geq r m(\Sigma)>0$
    giving us a contradiction.
\end{proof}

%%%%%%%%%%%%%%%%%%%%%%%%%%%%%%%%%%%%%%%%%%%%%

%The following remark contains an example of a non-measurable function
%which is scale invariant and non-monotone.

\begin{remark}
  It is possible to construct a cost function that is not monotone and
  not measureable, there is a nice example in \cite[Lemma
  2.2]{toure2021scaling} using a non-linear solution of Cauchy's functional
  equation, using the Hamel basis \cite{hamel1905basis}.
\end{remark}

Above in \cref{thm:monotone}, we established that the function must be monotone along all
rays. If the function is increasing along one ray and decreasing
along another, we can use a simple scale invariance argument
comparing two edges with the same cost. Upon rescaling, one of these would
increase and the other would decrease, contradicting scale invariance.
This ensures that the cost functions along different rays must either
all be monotonically
increasing or monotonically decreasing as we move away from zero.
Now, if our cost function was a decreasing function of the distance,
i.e., longer edges are cheaper than shorter edges, the minimal
matching would be well defined for finite collections of points, but
not for infinitely many points such as Poisson point processes on $\R^d$. In the proof
of \cite[Theorem 1]{minimal}, which proves that these $\gamma$-minimal
matchings exist and are perfect, we see that a crucial property of Poisson point
processes that allows us to have local minima for increasing cost functions is
that a Poisson point process on $\R^d$, cannot have an infinite
decreasing sequence of edges. However, a Poisson point process does in fact
have an infinite sequence of increasing edges, this means there can
never be a minimal matching for a cost function which is decreasing
with the distance, since by flipping along any infinite chain of increasing
edges, we could always be improving the cost.
Due to this, we consider cost functions which are increasing functions
of the distance along each ray from now on.

%%%%%%%%%%%%%%%%%%%%%%%%%%%%%%%%%%%%%%%%%%%%%

We go back to our analysis in one dimension. We have shown that that our cost
function is monotone and increasing along any given ray, in the following
proposition, we will show that it must be continuous.

\begin{proposition}\label{thm:continuous}
  A monotone, measurable scale invariant cost function on
  $\mathbb{R}^d$ can't have a discontinuity of size $k$ for any $k>
  0$.
\end{proposition}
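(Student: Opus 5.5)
We work along a single ray, as in the proof of \cref{thm:monotone}, so that $f:(0,\infty)\to\R$ is monotone and, by the discussion following that proposition, non-decreasing. Suppose for contradiction that $f$ has a jump at some $x_0>0$, that is, $f(x_0^+)-f(x_0^-)=k>0$. The aim is to use scale invariance to move this jump onto a two-edge configuration whose optimal choice changes as the scale passes $x_0$, and then to derive a contradiction.

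We use a configuration with two red and two blue points placed collinearly along the ray: a red point at $0$, a blue point at $t$, a red point at $p$ and a blue point at $p+u$. The two competing matchings have costs
\[
f(t)+f(u) \quad\text{and}\quad f(p+u)+f(t-p),
\]
and we may arrange $t>p$ so that the second matching uses two edges in the same direction. Because $f$ is monotone, it has only countably many jumps and is continuous elsewhere. We choose the parameters so that, when the configuration is scaled by $\lambda$, exactly one of the four edge lengths $\lambda t,\lambda u,\lambda(p+u),\lambda(t-p)$ crosses $x_0$ at $\lambda=\lambda_0$. The other three lengths should stay at continuity points of $f$ on a neighbourhood of $\lambda_0$; this is possible by a countability argument.

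With the parameters tuned so that the two costs are equal at $\lambda_0^-$, crossing $x_0$ adds the jump $k$ to one side only. The preferred matching therefore changes from one to the other as $\lambda$ passes $\lambda_0$, which contradicts scale invariance. Concretely, we aim to pick the configuration so that $\Phi(\lambda)=f(\lambda t)+f(\lambda u)-f(\lambda(p+u))-f(\lambda(t-p))$ changes sign near $\lambda_0$. The definition of scale invariance forces the sign of $\Phi$ to be constant in $\lambda$.

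The main obstacle is the tuning step: with monotone $f$, we must exhibit parameters for which $\Phi$ is $\le 0$ just before $\lambda_0$ and $>0$ just after, or vice versa. The plan is to use the intermediate behaviour of the continuous part of $\Phi$ in a free parameter, say $u$. As $u$ varies, the continuous contributions move continuously, while the jump at $x_0$ contributes a fixed $\pm k$. So we can choose $u$ to make $\Phi(\lambda_0^-)$ lie in $(-k,0]$, and then $\Phi(\lambda_0^+)=\Phi(\lambda_0^-)+k>0$.

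If the continuous part does not cover an interval of length $k$, we fall back on the iteration used in \cref{thm:monotone}. The jump at $x_0$ gives jumps at every $\lambda x_0$ for the rescaled comparisons. Summing the $k$-jumps over a dense set of scales inside a bounded interval would then contradict the finiteness of $f$ there, since a monotone function has total jump at most $f(b)-f(a)<\infty$ on $[a,b]$.
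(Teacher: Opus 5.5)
\textbf{The gap is the tuning step.} Your configuration is the right one, and it is the paper's: two red points followed by two blue points along one ray. One matching uses the shortest and longest edges, the other uses the two middle edges, and the lengths satisfy $x_1+x_4=x_2+x_3$ with $x_1<x_2\le x_3<x_4$. The whole proof depends on the tuning step, and your argument for it does not hold.

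You want an intermediate-value argument on ``the continuous part of $\Phi$'' as $u$ varies, to place $\Phi(\lambda_0^-)$ in $(-k,0]$. This fails for two reasons:
\begin{itemize}
\item $f$ is not known to be continuous anywhere. Its discontinuity set may a priori be dense, so you also cannot keep the other three lengths at continuity points on a whole neighbourhood of $\lambda_0$.
\item Even for continuous $f$, you would need the residual to take values on both sides of $0$, and nothing guarantees this.
\end{itemize}
The sign of the residual is set by the local shape of $f$, not by your free parameter. At the crossing of the longest edge $\lambda x_4$, the jump adds $k$ to the nested cost. Just before that crossing, the nested-minus-entwined difference $f(\lambda_0x_1)+f(x_0^-)-f(\lambda_0x_2)-f(\lambda_0x_3)$ is already $\ge 0$ whenever $f$ is convex on $(0,x_0)$. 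This follows from Karamata's inequality, since $(x_0,\lambda_0x_1)$ majorizes $(\lambda_0x_3,\lambda_0x_2)$. So for such $f$ the preferred matching never changes at that crossing, whatever the parameters.

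Your fallback does not repair this. The jumps of $\Phi$ at different scales are all the single jump of $f$ at $x_0$, seen through different $\lambda$. They are not jumps of $f$ at the points $\lambda x_0$. Scale invariance of the matchings does not by itself transport a discontinuity of $f$ to other points; that would essentially require knowing $f(\lambda\,\cdot)$ is affinely related to $f$, which is the conclusion of the whole section. So there is no accumulation of $k$-jumps to contradict monotonicity.

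\textbf{The missing idea.} You never need to control the residual if you compare two separated scales instead of the two sides of one crossing.
\begin{itemize}
\item Set-up: take $x_2=x_3$ (in your notation, $t=u$ with $p<t<p+u$). Cluster all four lengths near $x_0$, with $\epsilon$ chosen from the one-sided limits so that $f(x_0-\epsilon)>f(x_0^-)-k$ and $f(x_0+\epsilon)<f(x_0^+)+k$.
\item First scale: only $x_4$ lies beyond $x_0$, and $x_1\ge x_0-\epsilon$. The nested cost is at least $f(x_0-\epsilon)+f(x_0^+)>2f(x_0^-)$, while the entwined cost is at most $2f(x_0^-)$.
\item Second scale: a scale $s$ where only $sx_1$ lies below $x_0$, and $sx_4\le x_0+\epsilon$. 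The nested cost is at most $f(x_0^-)+f(x_0+\epsilon)<2f(x_0^+)$, while the entwined cost is at least $2f(x_0^+)$.
\end{itemize}
The jump counts once in the nested matching at both scales, but zero times and then twice in the entwined one. Both comparisons are therefore strict with margin coming from $k$, and no continuity of $f$ away from $x_0$ is used. This is exactly the paper's argument.
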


\begin{proof}
  Assume that the cost function has a discontinuity of size $k$ at  $\eek$.
  Consider a configuration of two red points followed by two blue
  points with distances as shown in \cref{fig:pts4cont}.
    There are two possible minimal matchings here,
  the nested edges and the entwined edges
 (see \cref{fig:twopossible}), with costs $f(x_1)
  +f(x_4)$ and $f(x_2)+f(x_3)$ respectively.
  Now we will choose the four distances $x_1,x_2,x_3,x_4$ and the
  rescaling $s$ carefully around the discontinuity of the cost
  function to contradict scale invariance of the matching (See \cref{fig:cont}).
Since $f$ is monotone, it must have left limits and right limits
  at $\eek$. Therefore, there must exist  $\epsilon>0$ such that $f(\eek-\epsilon) >
  f(\eek^-)-k$ and $f(\eek+\epsilon)<f(\eek^+)+k$.

  \begin{figure}
    \centering
    \begin{subfigure}{0.9\textwidth}
      \centering
    \begin{tikzpicture}
   %   \filldraw [red] (0,0) circle (2pt);
   %   \filldraw [red] (1,0) circle (2pt);
            \draw[mark=x, color=red,thick,mark options={scale=1.5}]
            plot coordinates {(0,0)};
            \draw[mark=x, color=red,thick,mark
            options={scale=1.5}] plot coordinates {(1,0)};
      \filldraw [blue] (3,0) circle (2pt);
      \filldraw [blue] (4,0) circle (2pt);
      \draw[<->] (0,1/2) -- (3,1/2) node[midway,above]{$x_2$};
      \draw[<->] (1,1) -- (4,1) node[midway,above] {$x_3$};
      \draw[<->] (1,-1/2)--(3,-1/2) node[midway,below]{$x_1$};
      \draw[<->] (0,-1)--(4,-1) node[midway,below] {$x_4$};
    \end{tikzpicture}
  \caption{The configuration of points and their distances.} \label{fig:pts4cont}
\end{subfigure}

\begin{subfigure}{0.9\textwidth}
      \begin{tikzpicture}
       \draw [->] (0,0.1) [out=30,in=150] to (4,0.1);
       \draw[->] (1,0.1) [out=30,in=150] to (3,0.1);
            \draw [->] (6,0.1) [out=30,in=150] to (9,0.1);
      \draw[->] (7,0.1) [out=30,in=150] to (10,0.1);
     % \filldraw [red] (0,0) circle (2pt);
            \draw[mark=x, color=red,thick,mark options={scale=1.5}]
       plot coordinates {(0,0)};
       % \filldraw [red] (1,0) circle (2pt);
             \draw[mark=x, color=red,thick,mark options={scale=1.5}]
       plot coordinates {(1,0)};
      \filldraw [blue] (3,0) circle (2pt);
      \filldraw [blue] (4,0) circle (2pt);
   %   \filldraw [red] (6,0) circle (2pt);
            \draw[mark=x, color=red,thick,mark options={scale=1.5}]
       plot coordinates {(6,0)};
       % \filldraw [red] (7,0) circle (2pt);
             \draw[mark=x, color=red,thick,mark options={scale=1.5}]
       plot coordinates {(7,0)};
      \filldraw [blue] (9,0) circle (2pt);
      \filldraw [blue] (10,0) circle (2pt);
      \node [rectangle,fill=none,draw=none] at (2,-0.5) {nested
        edges};
       \node [rectangle,fill=none,draw=none] at (8,-0.5) {entwined
        edges};
    \end{tikzpicture}
    \caption{The two possible minimal matchings.}\label{fig:twopossible}
\end{subfigure}
\caption{The configuration of points we use to prove continuity
  of the cost function, along with the two possible minimal matchings.
  The nested edges have cost $f(x_1)+f(x_4)$ and the entwined edges
  have cost $f(x_2)+f(x_3)$.}
\end{figure}

  \begin{figure}
   % \centering
    \begin{tikzpicture}[domain=0:4]
      \draw[<->] (0,4)--(0,0)--(4,0);
      %\draw[<->] (5,4)--(5,0)--(9,0);
      \draw (2,-0.05)--(2,0.05) node[midway,below]{$\eek$};
      \draw (0.5,-0.05)--(0.5,0.05) node[anchor=north east,xshift=0.3cm]{$\eek-\epsilon$};
      \draw (3.5,-0.05)--(3.5,0.05) node[anchor=north west,xshift=-0.3cm]{$\eek+\epsilon$};
      %\draw (1,-0.05)--(1,0.05);
      %\draw (1.7,-0.05)--(1.7,0.05);
      %\ancdraw (2.8,-0.05)--(2.8,0.05);
      \draw[densely dashed,purple] (1,1)--(1,0) node
      [anchor=north]{$x_1$};
      \draw[densely dashed, purple] (1.7,1.3)--(1.7,0) node
      [anchor=north]{$x_2$};
      \draw[densely dashed, purple](2.4,2.605)--(2.4,0) node
      [left,anchor=north]{$x_4$};
      \draw[domain=0.25:2,smooth] plot ({\x},{sqrt(\x)});
      \draw[domain=2:3.75,smooth] plot ({\x},{0.25*(\x-1.75)^2+2.5});
      \draw[<->] (-0.2,1.414)--(-0.2,2.5156)
      node[midway,left]{$k$};
      \node[anchor=east] at (0,4){$f(x)$};
     % \node[anchor=west] at (5,0){$x$};
\end{tikzpicture}
\qquad
\begin{tikzpicture}
      \draw[<->] (0,4)--(0,0)--(4,0);
      %\draw[<->] (5,4)--(5,0)--(9,0);
      \draw (2,-0.05)--(2,0.05) node[midway,below]{$\eek$};
      \draw (0.5,-0.05)--(0.5,0.05) node[anchor=north east,xshift=0.3cm]{$\eek-\epsilon$};
      \draw (3.5,-0.05)--(3.5,0.05) node[anchor=north west,xshift=-0.2cm ]{$\eek+\epsilon$};
      %\draw (1,-0.05)--(1,0.05);
      %\draw (1.7,-0.05)--(1.7,0.05);
      %\ancdraw (2.8,-0.05)--(2.8,0.05);
      \draw[densely dashed,red!60] (1.3,1.14)--(1.3,0) node
      [anchor=north]{$sx_1$};
      \draw[densely dashed,red!60] (1.7*1.3,2.55)--(1.7*1.3,0) node
      [anchor=north west,xshift=-.17cm]{$sx_2$};
      \draw[densely dashed, red!60](2.4*1.3,2.969)--(2.4*1.3,0) node
      [left,anchor=north]{$sx_4$};
      \draw[domain=0.25:2,smooth] plot ({\x},{sqrt(\x)});
      \draw[domain=2:3.75,smooth] plot ({\x},{0.25*(\x-1.75)^2+2.5});
      \draw[<->] (-0.2,1.414)--(-0.2,2.5156)
      node[midway,left]{$k$};
      \node[anchor=east] at (0,4){$f(x)$};
     % \node[anchor=west] at (4.5,0){$x$};
\end{tikzpicture}
    \caption[test]{The cost function $f$ with a discontinuity at $\eek$.
    The diagram on the left displays the costs of the two matchings
    with the points unscaled. The diagram on the right shows the costs
    after rescaling by $s$. Before we rescale, the minimal matching
    will be given by the entwined edges since $f(x_1)+f(x_4)>2f(x_2)$ and after the rescaling it
    will be given by the nested edges since
    $f(sx_1)+f(sx_4)<2f(sx_2)$. This proves that a scale invariant
    cost function must be continuous along each ray.
    }
    \label{fig:cont}
  \end{figure}

First, we choose $x_1\in(\eek-\frac{\epsilon}{2},\eek)$ and $x_4\in \left( \eek,\left(
    \frac{\eek+\epsilon}{\eek}x_1 \right) \wedge (2\eek-x_1) \right)$. We can
    do this because $\frac{\eek+\epsilon}{\eek}x_1 >
    \frac{(\eek+\epsilon)\left(\eek-\tfrac{\epsilon}{2}\right)}{\eek} =
    \eek+\frac{\epsilon(\eek-\epsilon)}{2\eek} > \eek$ and $2\eek-x_1> \eek$.
    Now, we choose $x_2=x_3=\frac{x_1+x_4}{2} \in (x_1,\eek)$. This is
    true since $x_4< 2\eek-x_1$ ensures that $\frac{x_1+x_4}{2}
    = x_2=x_3 <\eek$.
    For this configuration, we have
    \begin{align*}
      f(x_2)+f(x_3) &< 2f(\eek^-) \qquad \mbox{ and }\\
        f(x_1)+f(x_4) &> f(\eek-\epsilon)+ f(\eek^-)+k > f(\eek^-)+k+f(\eek^-)-k =
                      2f(\eek^-),
    \end{align*}
    implying that the entwined edges form the minimal matching (see
    \cref{fig:cont}).

    However, if we consider the rescaling given by any $s\in\left(
    \frac{\eek}{x_2},\frac{\eek}{x_1} \right)$, we have $sx_1\in
  (\eek-\epsilon,\eek)$ and $sx_2,sx_3,sx_4\in (\eek,\eek+\epsilon)$ resulting
    in
    \begin{align*}
      f(sx_2)+f(sx_3)&> f(\eek^+)+f(\eek^+) = 2f(\eek^-)+2k\qquad \mbox{ and } \\
            f(sx_1)+f(sx_4) &< f(\eek^-)+f(\eek+\epsilon)<f(\eek^-)+f(\eek^+)+k =
                        2f(\eek^-)+2k,
    \end{align*}
    implying that the nested edges form the minimal matching (see
    \cref{fig:cont}), giving
    us a contradiction.
  \end{proof}

 So far, we have established that the cost function defined along any
ray given by $f:(0,\infty)\rightarrow\mathbb{R}$ will be monotone (\cref{thm:monotone}) and
continuous (\cref{thm:continuous}), implying that the cost function
will be locally $L^1$. Now, we define a multiplicative convolution as follows.

Choose $\phi:(0,\infty)\rightarrow (0,\infty)$ such that $\phi$ is
compactly supported and smooth (done explicitly in
\cref{approxid}). Define the multiplicative convolution of $f$ and
$\phi$ to be given by $f\star\phi:(0,\infty)\rightarrow
\mathbb{R}$ where
\begin{equation*}
  f\star \phi(x) = \int_0^\infty f(sx)\phi(s)ds.
\end{equation*}

\begin{claim}[{multiplicative }convolution]\label{convolution}
  If $f:(0,\infty)\rightarrow \mathbb{R}$ is a scale invariant cost
  function, and $\phi$ is as defined above, $f\star\phi$ must also be a
  scale invariant cost function.
\end{claim}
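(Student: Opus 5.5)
Since $f\star\phi(x)=\int_0^\infty f(sx)\phi(s)\,ds$ is an average of dilated copies $x\mapsto f(sx)$ with nonnegative weights, the plan is to show that scale invariance passes to each dilate and then survives averaging. Scale invariance is a statement about the family of strict inequalities and ties between sums of edge costs, so I will first restate it in that form. For a finite configuration of $n$ red and $n$ blue points on the ray, two candidate matchings $M,N$ have cost vectors $(x_i)$ and $(y_i)$. Scale invariance of $f$ means that the comparison $\sum_i f(x_i)\le \sum_i f(y_i)$ is preserved by every rescaling $x_i\mapsto kx_i$, $y_i\mapsto ky_i$. Indeed, if the inequality held for some configuration and failed after rescaling by $k$, the minimal matching would be reordered. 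So the working form is: for all $k>0$,
\begin{equation*}
\sum_i f(x_i)\le \sum_i f(y_i)\iff \sum_i f(kx_i)\le \sum_i f(ky_i).
\end{equation*}
The partial-matching case of \cref{def:mm}(ii) reduces to the same comparisons.

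The key steps are as follows. First, for fixed $s>0$, the dilate $f_s(x)=f(sx)$ satisfies the same equivalence, since it is just the case $k\mapsto ks$. Second, suppose $M$ is $f$-minimal for the configuration, so that $\sum_i f(x_i)\le\sum_i f(y_i)$ for every competitor $N$. Applying the equivalence with $k=s$ gives $\sum_i f(sx_i)\le \sum_i f(sy_i)$ for every $s>0$. Multiplying by $\phi(s)>0$ and integrating yields $\sum_i f\star\phi(x_i)\le\sum_i f\star\phi(y_i)$. The integrals are finite because $f$ is continuous (\cref{thm:continuous}), hence locally bounded, and $\phi$ has compact support in $(0,\infty)$. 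Third, the same computation applies to the rescaled configuration: $\sum_i f\star\phi(kx_i)=\int \sum_i f(skx_i)\phi(s)\,ds$, and $M_k$ is $f$-minimal there, so $M_k$ is also $f\star\phi$-minimal. Finally, $f\star\phi$ is measurable (indeed continuous, by dominated convergence) and increasing, since $f$ is increasing along rays and $\phi>0$. It is therefore an admissible cost function.

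The main obstacle is that integrating weak inequalities only shows that $M$ remains \emph{a} minimal matching for $f\star\phi$. The statement also needs that no new ties or reversals appear, that is, that the $f\star\phi$-minimal matching is exactly the $f$-minimal one. I would handle this via strictness. Suppose $\sum_i f(x_i)<\sum_i f(y_i)$. Then the equivalence applied to the reversed inequality shows that $\sum_i f(sx_i)<\sum_i f(sy_i)$ for all $s$. Integrating a strictly positive continuous integrand against $\phi>0$ gives strict inequality for $f\star\phi$. Ties are likewise preserved for all $s$, since $\le$ in both directions is preserved, and so they integrate to ties. Hence the order relation between any two matchings under $f\star\phi$ coincides with that under $f$ at every scale. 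The set of minimal matchings is therefore identical, and $f\star\phi$ is scale invariant. For countable configurations, \cref{def:mm}(iii) only involves finite subconfigurations, so the finite argument suffices.
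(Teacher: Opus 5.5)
Your argument is correct and is essentially the paper's. Both proofs use that an $f$-minimal $M$ stays $f$-minimal at every scale $s$, so $\sum_i f(sx_i)\le\sum_i f(sy_i)$ for every competitor, and then add these inequalities with nonnegative weights; the paper does this via $f(x)+f(kx)$, then positive combinations and Riemann sums, while you integrate against $\phi$ directly.

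Your strictness step, which the paper omits, is a real improvement, since a Riemann-sum limit alone would only preserve weak inequalities. However, your ``equivalence'' for arbitrary pairs of matchings is more than the definition gives you. All you actually need is that a non-minimal $N$ stays non-minimal at every scale, and this follows by applying the definition to $(sR,sB)$ with rescaling factor $1/s$.
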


\begin{proof}
  We will show that for $f$ scale invariant, the function $g(x) = f(x) +
  f(kx) $ will also be scale invariant for any $k>0$. A similar argument can be used
  to show that any positive linear combination of $f(k_i x)$s  will be
  scale invariant and then defining the integral as a Riemann
  integral, we can conclude that $f\star\phi$ must be scale invariant.

  Fix $k>0$.
  Consider a collection of red points $X=\{x_1,\ldots,x_n\}$, a
  collection of blue points $Y=\{y_1,\ldots, y_n\}$ and a minimal
  matching $M$ induced by
  the cost function $f(x)$. This means that for any other matching $N$, we have
  \begin{align}
    \label{eq:temp1}
    \sum_{i=1}^n f(M(x_i)-x_i )\leq  \sum_{i=1}^n f(N(x_i)-x_i).
  \end{align}
  Now, scale invariance of the minimal matching implies
  \begin{equation}
    \label{eq:temp25}
    \sum_{i=1}^n f(kM(x_i)-kx_i) \leq  \sum_{i=1}^n f(k N(x_i)-kx_i).
  \end{equation}

  If we consider the cost function $g(x) = f(x)+f(kx)$, adding
  \cref{eq:temp1,eq:temp25}, we can conclude that the $g$-minimal
  matching will also
  be $M$ since for any other matching $N$,
  \begin{align*}
    \sum_{i=1}^n f(M(x_i)-x_i) + f(k M(x_i)-kx_i) & \leq
    \sum_{i=1}^n f(N(x_i)-x_i) +f(kN(x_i) -kx_i)\nonumber\\
    \sum_{i=1}^n g(M(x_i)-x_i) &\leq  \sum_{i=1}^n g(N(x_i)-x_i).
  \end{align*}
  Finally we can rescale equations \cref{eq:temp1} and \cref{eq:temp25}
  by $k>0$ due to the scale invariance of $f$, add them and conclude
  that $g(x)$ is also scale invariant.
\end{proof}

Now we want to show that $f\star\phi$ for $\phi\in
C^\infty_c(0,\infty)$ is smooth. We can do this by performing a change
of variables ($\log s=b$)
to write it as a convolution in the usual sense, which we know to be
smooth. Let
\begin{align*}
  %\label{eq:conv}
  g(x) & = \int_0^\infty f(sx) \phi(s)ds\nonumber \\
       & =  \int_0^\infty f(exp(\log s+\log x)) \phi(s)ds\nonumber \\
\Rightarrow \quad g(e^y) & =   \int_0^\infty f(exp(\log s+y)) \phi(s)ds\nonumber \\
  & = \int_{-\infty}^\infty f(exp(b + y))\phi(e^b) e^b db. \nonumber
\end{align*}
Since $ \phi(e^b) e^b$ is smooth and compactly supported and
$f(exp(y+b))$ is locally $L^1$, we conclude that $g=f\star\phi$ is smooth.
Now, we use the following result adapted
from  \cite[Proposition 18]{minimal} to classify what a smooth,
monotone, non-constant, non-decreasing cost function could look like.

\begin{proposition}\label{scinvfrommin}
  Let $g:(0,\infty)\rightarrow\R$ be a scale invariant cost function that is smooth,
  non-constant and non-decreasing. Then,
   there exist
$a,b,\gamma\in\mathbb{R}$, such that
\begin{equation}\label{eq:cost1dim}
  g(x) =
  \begin{cases}
    ax^\gamma +b, & \gamma\neq 0, \qquad \mbox{or} \\
    a\log x + b, & \gamma=0.
  \end{cases}
\end{equation}
\end{proposition}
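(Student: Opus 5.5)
We would follow the strategy of \cite[Proposition 18]{minimal}: use the four-point configuration of \cref{fig:pts4cont} to extract a functional equation for $g'$, then solve it. Two red points followed by two blue points, with gaps $x,y,z$ as in \cref{fig:config}, have two candidate matchings, nested with cost $g(y)+g(x+y+z)$ and entwined with cost $g(x+y)+g(y+z)$. Let
\[
  F(x,y,z)=g(y)+g(x+y+z)-g(x+y)-g(y+z)
\]
be the difference of these costs. Scale invariance says the sign of $F$ is unchanged when $(x,y,z)$ is replaced by $(kx,ky,kz)$ for every $k>0$. So the zero set $\{F=0\}$, which separates the region where nested is minimal from the region where entwined is minimal, must be a union of rays through the origin, i.e.\ a cone.

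The plan is to restrict to the symmetric slice $x=z$, where $F(x,y,x)=g(y)+g(2x+y)-2g(x+y)$. Writing $u=x+y$ and $h=x$, this becomes the second difference $\Delta(u,h)=g(u-h)+g(u+h)-2g(u)$ with $0<h<u$. Its sign depends only on $h/u$ after scaling.

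Two cases arise. If $g$ is affine on some interval, monotonicity together with scale invariance of sign patterns forces $\gamma=1$, giving the affine solution. Otherwise, we differentiate along the boundary of the cone. The cleanest route is to consider, for small $h$, $\Delta(u,h)=h^2 g''(u)+O(h^3)$. Then the sign of $g''$ cannot change on $(0,\infty)$: if it did at some point, scaling would transport the change everywhere along the dilation orbit and produce a contradiction with monotonicity of signs in $h/u$. This gives convexity or concavity.

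For the functional equation, a nonsymmetric configuration with $y$ fixed and $x,z$ varying is better. Say nested and entwined are equal at $(x,y,z)$, so $F=0$. Then $F(kx,ky,kz)=0$ for all $k$, and differentiating in $k$ at $k=1$ gives
\[
  y g'(y)+(x+y+z)g'(x+y+z)=(x+y)g'(x+y)+(y+z)g'(y+z).
\]
The function $\psi(t)=t g'(t)$ therefore satisfies the same additive relation as $g$ on the cone $\{F=0\}$. Using this cone's richness (a two-parameter family of points, from the intermediate value theorem applied to $F$ on a suitable curve) we would derive $g(t)=c\,\psi(t)+d$ locally, an ODE $t g'(t)=\lambda(g(t)-b)$. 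This ODE yields $g=ax^\gamma+b$ for $\gamma=\lambda\neq0$, or $g=a\log x+b$ if $\psi$ is constant ($\gamma=0$).

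The main obstacle is the step from ``$\psi$ and $g$ both annihilate the same cone'' to a linear relation $\psi=\lambda g+\mu$. The approach we would try first is to show that the map $t\mapsto(g(t),\psi(t))$ has image lying on a line. Suppose the image contained three affinely independent points. Then we could choose points on $\{F=0\}$ at which the $g$-relation holds but the $\psi$-relation fails. If this is messy, the fallback is to impose that $\{F=0\}$ is exactly invariant under dilation, compute its tangent directions via the implicit function theorem where $\partial_z F\neq0$ (nonzero by strict monotonicity away from trivial cases), and match these with the radial direction. This should directly produce the Euler-type ODE above.
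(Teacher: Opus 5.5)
\textbf{There is a genuine gap, and it comes before the step you flag.} The two-red/two-blue configuration only uses edges pointing one way along the ray. Its tie set $\{F=0\}$ is empty, away from degenerate faces, for every target function except the affine one.

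Your own intermediate step gives this. Outside the affine case, $g''$ has constant sign and $g$ is affine on no interval, so $g$ is strictly convex or strictly concave. Put $\lambda=z/(x+z)$. Then $x+y=\lambda y+(1-\lambda)(x+y+z)$ and $y+z=(1-\lambda)y+\lambda(x+y+z)$. Adding the two strict convexity inequalities gives $F(x,y,z)>0$ for all $x,z>0$, and $F<0$ in the concave case.

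So $\{F=0\}$ reduces to the faces $x=0$ or $z=0$, where $F$ vanishes trivially. There is no two-parameter family for the intermediate value theorem to produce. The identity you derive for $\psi(t)=tg'(t)$ is therefore vacuous, and the implicit-function fallback has nothing to act on.

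No refinement of this configuration can help. For $g(t)=e^t$ one has $F(kx,ky,kz)=e^{ky}(e^{kx}-1)(e^{kz}-1)>0$ for every $k>0$. So $e^t$ passes every test your argument imposes, yet it is not of the form \eqref{eq:cost1dim}. More generally, take strictly convex (resp.\ concave) $g$ and perfect matchings all of whose edges point the same way. The minimal one is the unique one with no nested (resp.\ entwined) pair, and that is determined by the order of the points alone. The information has to come from edges of opposite orientation.

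\textbf{The missing ingredient is the opposite branch of the cost.} The paper proceeds as follows.
\begin{enumerate}[(i)]
\item It writes $f=g_1$ on $[0,\infty)$ and $f=g_2$ on $(-\infty,0)$, shows the ranges of $g_1$ and $g_2$ overlap (\cref{cl:ranges}), and fixes $t_1>0>t_2$ with $f(t_1)=f(t_2)$.
\item It uses five alternating points with gaps $x,w,y,z$. The two competing matchings cost $g_1(x)+g_1(y)$ and $g_2(w)+g_2(z)$, with four independent lengths.
\item It localizes to $[t_1,t_1+\delta]$ and $[t_2-\delta,t_2]$ so that no other matching wins (\cref{lem:sc-inv}). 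The tie set is then a genuine hypersurface preserved by dilations.
\item Routing through $g_2$-ties, it shows that $g_1(x)+g_1(y)=g_1(z)+g_1(w)$ is preserved under scaling for nearby quadruples.
\item Differentiating along the tie set gives $g_1'(sx)/g_1'(x)=g_1'(sz)/g_1'(z)$ (\cref{lem:t1delta}), a multiplicative Cauchy equation for $g_1'$.
\end{enumerate}

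Your $k$-differentiation would also finish the argument on this tie set, and it removes the obstacle you identified. It gives $\psi(x)+\psi(y)=\psi(z)+\psi(w)$ whenever $g_1(x)+g_1(y)=g_1(z)+g_1(w)$. Hence $\psi\circ g_1^{-1}$ satisfies Jensen's equation on an interval and is affine. That is exactly the missing relation $tg_1'(t)=\lambda g_1(t)+\mu$, holding on an interval; scaling then propagates it to all of $(0,\infty)$.
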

The above proposition differs from \cite[Proposition 18]{minimal}
because it discusses cost functions along the positive $x$ axis instead of a symmetric
cost function on $\R$. We prove this modified result at the end of
this section.
\cref{scinvfrommin} tells us that any function
$f\star \phi$, where $f$ is a scale invariant cost function and $\phi$
is smooth, must be of the form in \eqref{eq:cost1dim}.
In order to show that $f$ must also be one of the above two
possibilities in \eqref{eq:cost1dim}, we choosing an
approximate identity as follows.

\begin{definition}[approximate identity]\label{approxid}
  An approximate identity is a family of functions
  $\{\phi_\epsilon\}_{0<\epsilon<1}\in C^\infty_c(0,\infty)$ such that
  \begin{enumerate}[(i)]
  \item $\int_0^\infty \phi_\epsilon(a) da =1$ for all $\epsilon>0$,
  \item $\phi_\epsilon(x)\geq 0$ for all $x,\epsilon>0$,
    \item $\mbox{supp}(\phi_\epsilon) = (1-\epsilon,1+\epsilon)$.
  \end{enumerate}
\end{definition}
Now, it is a simple exercise in real analysis to show that $f\star\phi_\e$
converges pointwise to $f$ as $\e$ approaches $0$. We leave this as an exercise to the reader.
\iffalse
\begin{claim}[pointwise convergence]\label{ptwiseconvergence}
  For an approximate identity from \cref{approxid} and
  $f:(0,\infty)\rightarrow \R$, any continuous function, we have $f\star\phi_\epsilon\rightarrow f$
  pointwise.
\end{claim}

\begin{proof}
  Fix $x>0$ and $\epsilon_0>0$. Since $f$ is continuous at $x$, there
  exists $\delta>0$ such that for $|x-y|<\delta$, we have
  $|f(x)-f(y)|<\epsilon_0$.

  If we choose $\epsilon<\epsilon(x)=\delta/x$, we have

  \begin{align*}
    %\label{eq:ptw}
    |f\star\phi_\epsilon(x)-f(x)|& = \left|\int f(ax)\phi_\epsilon(a)da -
                                   f(x)\right|\nonumber \\
                                 &= \left|\int \left(f(ax)-f(x)\right) \phi_\epsilon(a)da\right|\nonumber\\
    &\leq \int_{|a-1|<\epsilon}  |f(ax)-f(x)| |\phi_\epsilon(a)|da
      \nonumber\\
    &\leq \int_{|a-1|<\epsilon} \epsilon_0\phi_\epsilon(a)da \leq
    \epsilon_0.
  \end{align*}

  The last inequality here holds because if $|a-1|<\epsilon<\delta/x$,
  we have $|ax-x|< \delta$ implying $|f(ax)-f(x)|<\epsilon_0$.

  Since $\epsilon_0>0$ was chosen arbitrarily, we have
  $f\star\phi_\epsilon$ converges to $f$ pointwise for all $x>0$ or
  equivalently
  \[
\forall x>0, \qquad \lim_{\epsilon\rightarrow 0} f\star\phi_\epsilon(x) =
f(x).
  \]
\end{proof}
\fi
This in addition to \cref{scinvfrommin}, tells us that $f$ is the
pointwise limit of a sequence of functions with the form described in
\cref{eq:cost1dim}. In
\cref{recoverf} below we show that $f$ must also have this form.

\begin{proposition}[Recovering $f$ from the convolution]\label{recoverf}
  Assume $f:(0,\infty)\rightarrow\mathbb{R}$ is a measurable scale
  invariant cost function along a ray. Then, there exist
  $a,b,\gamma\in\mathbb{R}$ such that
  \[
f(x) =
\begin{cases}
  ax^\gamma +b,& \gamma\neq 0, \qquad \mbox{or} \\
  a\log x +b,& \gamma =0.
\end{cases}
\]
\end{proposition}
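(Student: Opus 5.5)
By \cref{thm:monotone} and \cref{thm:continuous}, $f$ is monotone and continuous on $(0,\infty)$; as discussed after \cref{thm:monotone}, we may take it non-decreasing. If $f$ is constant, it is of the stated form with $a=0$, so assume it is non-constant. Fix an approximate identity $\{\phi_\e\}$ as in \cref{approxid} and set $g_\e=\fpe$. By \cref{convolution}, $g_\e$ is scale invariant. By the change of variables $\log s=b$ it is smooth. It is non-decreasing, since $f(sx)$ is non-decreasing in $x$ and $\phi_\e\geq 0$. For $\e$ small it is non-constant: $g_\e\to f$ pointwise and $f(x_1)<f(x_2)$ for some $x_1<x_2$. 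So \cref{scinvfrommin} applies, and $g_\e(x)=a_\e x^{\gamma_\e}+b_\e$ with $\gamma_\e\neq0$, or $g_\e(x)=a_\e\log x+b_\e$.

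The key step is to pass the parametric form to the limit. I would work in logarithmic coordinates: put $h(y)=f(e^y)$ and $h_\e(y)=g_\e(e^y)$. Each $h_\e$ is then an affine image of $e^{\gamma_\e y}$ or of $y$. In every case $h_\e$ solves the linear ODE $h''=\gamma h'$ on $\R$, so it lies in the three-dimensional family of functions $\{A e^{\gamma y}+B\}\cup\{Ay+B\}$.

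To extract a limit, use three points $y_0<y_1<y_2$ with $h(y_0)<h(y_2)$. Use the ratio
\[
\rho_\e=\frac{h_\e(y_1)-h_\e(y_0)}{h_\e(y_2)-h_\e(y_0)},
\]
choosing $y_1$ as the midpoint. This ratio is a continuous strictly monotone function of $\gamma_\e$ alone, namely $1/(1+e^{\gamma_\e\delta})$ where $\delta=y_1-y_0$, with the value $1/2$ corresponding to the log case. It converges to $\rho=\frac{h(y_1)-h(y_0)}{h(y_2)-h(y_0)}$.

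If $\rho\in(0,1)$, then $\gamma_\e$ converges to a finite $\gamma$. After that, $a_\e$ and $b_\e$ are determined linearly by the values at $y_0$ and $y_2$, so they converge too. Taking pointwise limits then gives $f(x)=ax^\gamma+b$ or $a\log x+b$; the case $\gamma_\e\to0$ with $\gamma\neq0$ for the $\e$ is handled by the standard renormalisation $(x^\gamma-1)/\gamma\to\log x$.

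The main obstacle is excluding the degenerate case $\rho\in\{0,1\}$, where $\gamma_\e\to\pm\infty$. In that case the limit profile of $h$ would be a step: constant on one side of the chosen points and jumping at an endpoint. Since $h$ is continuous (\cref{thm:continuous}), I would vary $y_0,y_1,y_2$ (still equally spaced) over a range where $h$ is non-constant. If $\gamma_\e\to+\infty$, the normalised functions $(h_\e-h_\e(y_0))/(h_\e(y_2)-h_\e(y_0))$ converge to $0$ on $[y_0,y_2)$. Then $h$ is constant on $[y_0,y_2)$ but differs at $y_2$, contradicting continuity; the case $-\infty$ is symmetric. Note that $\gamma_\e$ does not depend on the chosen points, so a single subsequence works throughout.

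Finally, uniqueness of the limit shows the parameters do not depend on the subsequence, which completes the identification of $f$ along the ray.
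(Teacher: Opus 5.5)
Your proof is correct and follows the paper's route: convolve with the approximate identity, use \cref{convolution} and \cref{scinvfrommin} to get the parametric form of $f\star\phi_\e$, then pass to the pointwise limit. Your handling of the limit is more careful than the paper's, which treats the power-law case only as ``a similar argument''. The ratio $\rho_\e=1/(1+e^{\gamma_\e\delta})$ pins down $\gamma_\e$, the renormalisation $(x^\gamma-1)/\gamma$ covers $\gamma_\e\to0$ (where $a_\e,b_\e$ need not converge), and continuity rules out $\gamma_\e\to\pm\infty$; the paper addresses none of these.
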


\begin{proof}
  We established earlier that $f$ must be monotone
  (\cref{thm:monotone}), continuous (\cref{thm:continuous}), and
  the pointwise limit of the convolution above
  (\cref{approxid}).
  We know from \cref{scinvfrommin} that for each $\epsilon>0$, there exist
  $a_\e,b_\e,\gamma_\e\in\mathbb{R}$ such that
  \[
\fpe(x) =
\begin{cases}
  a_\e
  x^{\gamma_\e} +b_\e,& \gamma_e\neq 0, \qquad \mbox{or} \\
  a_\e\log x +b_\e, & \gamma_\e =0,
\end{cases}
\]
and $f\star\phi_\epsilon(x)\rightarrow f$ pointwise. We consider the
following two cases.

{\bf Case 1:} There are infinitely many $\e>0$ , such that
$\fpe(x) =a_\e \log x + b_\e$.
For simplicity, we index this sequence by $\e$. Due to pointwise
convergence of this sequence $\fpe$ to $f$ for all $x>0$, we have
\begin{align}
  f(1)& = \lim_{\e\rightarrow 0} \fpe(1) =  \lim_{\e\rightarrow 0} a_\e
  \log 1 +b_\e =  \lim_{\e\rightarrow 0} b_\epsilon, \label{eq:f1} \\
  f(e)& = \lim_{\e\rightarrow 0} \fpe(e) =  \lim_{\e\rightarrow 0} a_\e
  \log e +b_\e =  \lim_{\e\rightarrow 0}a_\e + b_\epsilon. \label{eq:fe}
\end{align}
 Now, \cref{eq:f1} implies $ \lim_{\e\rightarrow 0} b_\epsilon $ exists and equals
 $f(1)$. This, in addition to \cref{eq:fe} implies
 $\lim_{\e\rightarrow 0} a_\e $ also exists and equals $f(e)-f(1)$.
 Since the subsequence $\fpe(x) = \aep \log x +\be$ converges to
 $f(x)$ pointwise and both $\aep$ and $\be$ converge, we conclude that
 for $a = f(e)-f(1)$ and $b = f(1)$, we have
 \[f(x) = a\log x +b. \]

 {\bf Case 2:} There are infinitely many $\epsilon>0$ such that
 $\fpe=a_\e x^{\gamma_e} + b_\e$. A similar argument to the one above
 can be used to show that $\lim_{\e\rightarrow 0}a_\e$,
 $\lim_{\e\rightarrow 0}b_\e$ and $\lim_{\e\rightarrow 0}\gamma_\e$ all exist and that $f$ must equal
 $a x^\gamma +b$ where $a$, $b$ and $\gamma$ are the respective limits.
\end{proof}

This concludes that any measurable scale invariant cost function along
a given ray must
be of the form as in \cref{eq:cost1dim}.
We are finally ready to finish the proof of \cref{thm:unbalcost}, which discusses how different the cost function can be along different
rays.

\begin{proof}[Proof of \cref{thm:unbalcost}]
  Consider a configuration of three points in $\mathbb{R}^n$, with a
  red point and two blue points such that the two possible edges are
  along different rays as in \cref{fig:unbalcostlast}.
  \begin{figure}[h]
    \centering

  \begin{tikzpicture}
    %\filldraw [red] (0,0) circle (2pt);
          \draw[mark=x, color=red,thick,mark options={scale=1.5}] plot
          coordinates {(0,0)};

    \filldraw [blue] (3,1) circle (2pt);
    \filldraw [blue] (1,2) circle (2pt);
    \draw[->] (0+.15,0.05) -- (3-0.15,1-0.05) node[midway,below] {$x$};
    \draw[->] (0+0.05,0+0.1) -- (1-0.05,2-0.1) node[midway,right] {$y$};
  \end{tikzpicture}
\caption{A configuration of points used in the proof of
  \cref{thm:unbalcost}.} \label{fig:unbalcostlast}
  \end{figure}

  Let the cost functions along the two rays be
  given by $f_{\ti}$ and $f_{\tii}$ respectively.
From \cref{recoverf}, we know that $f_{\ti} $ and $f_{\tii}$ must each
have the form of  \eqref{eq:cost1dim}.
  We choose the distances
  between the points to be given by $x,y\in(0,\infty)$
  such that the two edges have the same cost. This, along with the
  scale invariance of the cost function gives us the following two
  equations,
  \begin{align}
    \label{eq:sc2dir}
    f_{\ti}(x) &= f_{\tii}(y),\\
     f_{\theta_1}(kx) &= f_{\theta_2}(ky). \label{eq:sc2dir2}
  \end{align}
  In this proof, we vary $k>0$ and show that for $\gamma\neq 0$, the parameters $\gamma$
  and $b$ in \eqref{eq:cost1dim}
  must be equal in all directions, and in case the cost
  function resembles a  logarithm, the parameter $a$ of \eqref{eq:cost1dim} should be the
  same in all directions.

  {\bf Case 1:} If $f_{\theta_1}(x) = a x^{\gamma} +b$ and $f_{\theta_2}(y) =
  A y^{\Gamma} +B$.\\
  From \cref{eq:sc2dir,eq:sc2dir2}, we
  know
  \begin{align}
    a  x^{\gamma} +b &= A  y^{\Gamma} +B, \nonumber\\
    a k^{\gamma} x^{\gamma} +b &= A k^{\Gamma} y^{\Gamma} +B, \quad
    \mbox{for all } k>0. \nonumber
  \end{align}
  Comparing the LHS and RHS of the second equation as
  $k\rightarrow\infty$, we get $\Gamma=\gamma$. Then,
  taking the limit as $k\rightarrow 0$ for $\gamma>0$ or the limit as
  $k\rightarrow\infty$ for $\gamma<0$, we get $b=B$. So the only parameter which can be different along
  different rays is the coefficient $a$.

  {\bf Case 2:} If $f_{\theta_1}(x) = a \log(x) +b$ and
  $f_{\theta_2}(y) = A\log(y)+B$.\\
  From \cref{eq:sc2dir,eq:sc2dir2}, we
  know
  \begin{align}
    a\log(x) +b &= A\log(y) +B, \nonumber\\
    a(\log(k)+ \log(x)) +b &= A(\log(k)+\log(y)) +B, \qquad \mbox{for
    all } k>0.\nonumber
 \end{align}
  Taking the difference, we get
  $A\log(k)=a\log(k)$ for all $k>0$ implying that $A=a$.

  {\bf Case 3:} If $f_\ti(x) =ax^\gamma+b$ and $f_\tii(y) =
  A\log(y)+B$.\\
  From \cref{eq:sc2dir,eq:sc2dir2}, we
  know
  \begin{align}\label{eq:cros}
    ax^\gamma +b &= A\log y +B\nonumber\\
    ak^\gamma x^\gamma +b &= A\log k+A\log y +B, \qquad \mbox{for all
    }k>0
  \end{align}
Note that the LHS of \cref{eq:cros} is $O(k^\gamma)$ and the RHS of
\cref{eq:cros} is $O(\log k)$, these cannot be equal unless we had
$A=0$ which results in $f_\ti$ and $f_\tii$ both being the same
constant function.
\end{proof}

Now we present the proof of \cref{scinvfrommin} which we split into
several lemmas. This is very similar to the proof in \cite[Proposition
18]{minimal}, but done carefully so that we may avoid the cost
function being symmetric. We denote the function $f$ in the following way
  \begin{equation}\label{eq:phantom}
    f(x) =
    \begin{cases}
      g_2(x) & \text{ for } x < 0,\\
      g_1(x) & \text{ for } x\geq 0,
    \end{cases}
  \end{equation}
  for $g_1:[0,\infty)\rightarrow \mathbb{R}$ increasing and
  $g_2:(-\infty,0)\rightarrow \mathbb{R}$ decreasing.
{
 Note that the cost function cannot be constant on an interval
 without being constant along the entire ray. This can be seen easily
 using a configuration of one red point and two blue points with
 distances that can be rescaled to be in the interval where the cost
 function is constant and then outside it where there is a clear
 minimal matching.
}

  \begin{claim} \label{cl:ranges}
    The range of $g_1$ and the range of $g_2$ must have some
    intersection.
  \end{claim}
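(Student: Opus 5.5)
Suppose, for contradiction, that the ranges of $g_1$ and $g_2$ are disjoint. Both functions are continuous and monotone by \cref{thm:monotone} and \cref{thm:continuous}, and they are non-constant by the remark just before the claim, so each range is an interval. Since the ranges are disjoint, one interval lies entirely on one side of the other; without loss of generality $g_1(x) < g_2(y)$ for all $x\ge 0$ and $y<0$. (If the order is reversed, swap the roles of the two colours or reflect the line.) So every red-blue edge is strictly cheaper than every blue-red edge, whatever the lengths.

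The plan is to build a finite configuration whose minimal matching must contain a blue-red edge in one scaling but cannot in another, contradicting scale invariance. With red at $0$ and blue at a single point on either side, the minimal matching always takes the red-blue edge; that is consistent with scale invariance, so such configurations give nothing. Instead I use configurations where the number of blue-red edges is forced. Take points on the line ordered blue, red, red, blue, at positions $-u<0<v<v+w$. The perfect matchings are:
\begin{itemize}
\item $\{\langle 0,-u\rangle,\langle v,v+w\rangle\}$, with one blue-red and one red-blue edge;
\item $\{\langle 0,v+w\rangle,\langle v,-u\rangle\}$, also with one of each.
\end{itemize}
The comparison is between $g_2(-u)+g_1(w)$ and $g_1(v+w)+g_2(-u-v)$. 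Writing $g_1(x)=ax^\gamma+b$ and $g_2(-y)=a'y^{\gamma'}+b'$ along each ray, as allowed by \cref{recoverf}, scale invariance of the ordering in $k$ forces relations between the exponents, as in the proof of \cref{thm:unbalcost}. That proof only compares \emph{equal} costs, and here equality fails because the ranges are disjoint.

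The key contradiction instead comes from equal-cost configurations that mix directions. Choose a configuration with two red points and two blue points in which every perfect matching uses exactly one blue-red edge. By monotonicity along rays and continuity, I can tune one distance continuously so that the two matchings have equal total cost. Equality is achievable because each total is a sum of one $g_1$-value and one $g_2$-value, so disjointness of the ranges no longer separates them. Scale invariance then forces
\[
g_2(-k u)-g_2(-k(u+v)) = g_1(k(v+w))-g_1(kw)
\]
for all $k>0$. Letting $k\to 0$ and $k\to\infty$ and using the power or logarithmic forms, this forces $\gamma=\gamma'$. The same equal-cost argument as in the proof of \cref{thm:unbalcost} then shows the additive constants agree when $\gamma\neq0$. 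In that case both ranges accumulate at the common value $b$ (as $x\to0$ if $\gamma>0$, as $x\to\infty$ if $\gamma<0$), so for large or small arguments the values interleave, contradicting disjointness. When $\gamma=0$ each range is all of $\R$, so the ranges cannot be disjoint.

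The main obstacle is the middle step: producing a configuration with exactly one edge of each orientation in every matching and a continuous parameter that achieves equal cost. I would first attempt the blue, red, red, blue layout above with $u$ varied, checking via the intermediate value theorem that the cost difference changes sign. If it does not change sign, I would fall back to six points, which allow more freedom.
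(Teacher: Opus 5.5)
Your proposal has two genuine gaps. First, it is circular. You take the forms $g_1(x)=ax^\gamma+b$ and $g_2(-y)=a'y^{\gamma'}+b'$ from \cref{recoverf}, but \cref{recoverf} is derived from \cref{scinvfrommin}, and this claim is the first step of the paper's proof of \cref{scinvfrommin}. It is what produces $t_1>0>t_2$ with $f(t_1)=f(t_2)$, on which \cref{lem:sc-inv} and the rest of that proof depend. Borrowing the argument from the proof of \cref{thm:unbalcost} is circular in the same way: it uses \cref{recoverf}, and it starts from single-edge ties $f_{\theta_1}(x)=f_{\theta_2}(y)$, which presuppose that the ranges meet. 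Moreover, additive constants cancel in any comparison between two matchings with the same number of edges of each orientation, so your equal-count configurations could never give $b=b'$. (You would not need $b=b'$ once you had $\gamma=\gamma'$, since both ranges would then be half-lines unbounded in the same direction, but that step still presupposes the forms.)

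Second, the central step is missing, and the configuration you propose cannot work. With two red and two blue points, the only orders in which both perfect matchings contain exactly one blue-red edge are blue, red, red, blue and red, blue, blue, red. In both, the separate pair is strictly cheaper than the nested pair for every choice of gaps, by monotonicity alone. In your layout, $g_2(-u)<g_2(-u-v)$ and $g_1(w)<g_1(v+w)$. So your displayed identity has a negative left side and a positive right side and never holds. There is no sign change for the intermediate value theorem, and the six-point replacement is not given.

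The idea you are missing is to compare matchings with \emph{different} numbers of edges of each orientation and then rescale. As $k\to0$ or $k\to\infty$, each edge cost tends to the infimum or supremum of its branch, so under disjoint ranges the number of edges of the expensive orientation becomes decisive. This is the mechanism the paper uses, taking WLOG every blue-red edge cheaper than every red-blue edge. For instance, for $r<b<r'<b_2$ it compares $\{\langle kr,kb\rangle,\langle kr',kb_2\rangle\}$, whose cost tends to $2\inf g_1$, with $\{\langle kr,kb_2\rangle,\langle kr',kb\rangle\}$, whose cost tends to $\inf g_1+\inf g_2$. Combining such cases with the recurrence of the walk $W$, it concludes that a minimal matching of the Poisson processes cannot contain a red-blue edge.
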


\begin{proof}
    Suppose not. Say the cost function is continuous along rays
    (\cref{thm:continuous}) and the two branches have disjoint ranges, WLOG
    i.e.,
    $\inf\{g_2\}<\sup\{g_2\}<\inf\{g_1\}<\sup\{g_1\}$, and
    $\sup\{g_2\},\inf\{g_1\}\in\R$. This means that the shortest
    possible red-blue edge is more expensive than the longest possible
    blue-red edge. Suppose the minimal matching contains a red-blue
    edge $\langle r,b \rangle$ with $r<b$. We consider the following
    cases, pictured in \cref{fig:ranges2},
    regarding any other red point at $r'$ where $r'>r$.

    \begin{figure}[h]
      \centering
      \begin{tikzpicture}
        \foreach \i in {0,4}{
         % \filldraw[red] (\i,0) circle (2pt);
          \draw[mark=x, color=red,thick,mark options={scale=1.5}] plot coordinates {(\i,0)};
          }
          \filldraw[blue] (1.5,0) circle (2pt);
          \foreach \i in {-1.2,0.7,3.2,5.6}
          {\filldraw[blue,fill opacity = 0.2] (\i,0) circle (2pt);}
          \draw[->] (0,0.1) to [out = 45, in=135] (1.5,0.1);
          \draw[<-] (3.2,0.1) to [out = 45,in=135] (4,0.1);
          \foreach \i in {-1.2,0.7}
          {\draw[<-,dashed] (\i,0.1) to [out=45,in=135] (4,0.1);}
          \draw[->,dashed] (4,0.1) to [out=45,in=135] (5.6,0.1);
          \node[below] at (0,0) {$r$};
          \node[below] at (1.5,0) {$b$};
          \node[below] at (4,0) {$r'$};
          \node[below] at (3.2,0){$b'$};
          \node[below] at (5.6,0){$b_2$};
          \node[below] at (-1.2,0){$b_3$};
          \node[below] at (0.7,0){$b_4$};
      \end{tikzpicture}
      \caption{In \cref{cl:ranges}, we show that a minimal matching
        for a cost function with disjoint ranges of $g_1$ and $g_2$
        cannot have a single red-blue edge in general. If there was a
        red-blue edge $\langle r,b \rangle$, then any red point to the
        right of this edge $r'$ could only be matched to a blue point
        $b'$ such that $r<b<b'<r'$. This can only occur for specific
        point configurations, and occurs with probability zero for
        Poisson point processes.   }
      \label{fig:ranges2}
    \end{figure}

    First, the red point $r'$ cannot be unmatched since
    matching it to the blue point $b$ would reduce the cost of the
    matching.
    Second, the point $r'$ cannot be matched to a blue point at $b_2$
    such that  $r<b<r'<b_2$ since
    if we consider the scaling where $k\rightarrow 0$, the cost of
    this matching $\{\langle kr,kb \rangle,\langle kr',kb_2 \rangle\}$ approaches $2\inf\{g_1\}$ and the cost of the matching $\{\langle kr,kb_2 \rangle,\langle kb,kr' \rangle\}$ approaches
    $\inf\{g_1\}+\inf\{g_2\}$ which is smaller given our assumption.
    Third, if the red point at $r'$ was matched to a blue point at
    $b_3$ where $b_3<r<b<r'$, then on taking the rescaling so
    $k\rightarrow\infty$, the cost of the matching $\{\langle kr,kb
    \rangle,\langle kb_3,kr' \rangle\}$ is given by $2\sup\{g_2\}$
    which is smaller than the cost of the matching $\{\langle kr,kb_3
    \rangle,\langle kr',kb \rangle\}$ which approaches
    $\sup\{g_2\}+\sup\{g_1\}$.
    Finally, if the red point $r'$ is matched to a blue point at $b_4$
    such that $r<b_4<b<r'$, simply by monotonicity of the cost function
    (\cref{thm:monotone}), we see that the matching $\{\langle r',b
    \rangle,\langle r,b_4 \rangle\}$ is minimal.
    Therefore, if there is a red-blue edge $\langle r,b \rangle$, then any red point to the
    right of the edge $r'$ can only be matched to a blue point $b'$
    such that $r<b<b'<r'$. This imposes a specific condition on the
    points where the number of red points can never exceed the number
    of blue points in the interval $(b,x)$ for any $x\in(0,\infty)$.
    This is not satisfied if we consider matchings between two Poisson
    point processes of equal intensity on $\R$.

    We could still have matchings which do not contain any red-blue
    edges. These are studied in \cref{thm:a-infty}.
  \end{proof}

  Since the range of
  $g_1$ and the range of $g_2$ have some intersection,
  there exists $t_1>0$ and $t_2<0$ such that $f(t_1)=f(t_2)$. We can replace $f(x)$ with
  $\frac{f(x)}{f(t_1)}$ so that we have  $f(t_1)=f(t_2)=1$.
  Since $g_2$ is decreasing and $0>t_2>2t_2-t_1$, we have $g_1(t_1) =
  g_2(t_2) < g_2(2t_2-t_1)$. Consequently, there exists $\delta_2>0$
  such that $2g_2(t_2-\delta_2) < g_2(t_2) + g_1(2t_1-t_2)$.
  Similarly, because $g_1$ is increasing and $t_1<2t_1-t_2$, we
  have $g_2(t_2)=g_1(t_1)< g_1(2t_1-t_2)$. So, there exists
  $\delta_1>0$ such that $2g_1(t_1+\delta_1)< g_1(t_1) +
  g_2(2t_2-t_1)$.
  Fix $\delta=\min{\{\delta_1,\delta_2}\}$ for the rest of this argument.

  \begin{lemma}[Analogous to Lemma 19 in \cite{minimal}] \label{lem:sc-inv}
    For $x,y\in[t_1,t_1+\delta]$ and $w,z\in[t_2-\delta,t_2]$, and any
    $s>0$, $f(x)+f(y)\leq f(w)+f(z)$ implies
    \begin{equation}
      f(sx)+f(sy)\leq f(sw)+f(sz).
    \end{equation}
  \end{lemma}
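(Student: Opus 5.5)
The idea is to build a finite configuration of red and blue points on $\R$ in which the $f$-minimal matching is decided by comparing $f(x)+f(y)$ with $f(w)+f(z)$. Scale invariance then moves the comparison to every scale $s$. The key simplification is that the comparison only has to be controlled at scale $1$. If $M$ is the $f$-minimal matching of the configuration, scale invariance says $sM$ is $f$-minimal for the rescaled configuration. So $f[sM]$ is at most the cost of any other matching of the rescaled points, and in particular at most that of the rescaled competitor. That is exactly the inequality we want.

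A two-red, two-blue configuration cannot realise both pairs $\{x,y\}$ and $\{w,z\}$ at once. It would force $x+y=w+z$, which is impossible because $x+y>0>w+z$. Instead I would use two red points and three blue points, with one blue left unmatched as in \cref{def:mm}(ii). Take red points $r_1=0$ and $r_2=x-z$, and blue points
\[
b_1=x=r_2+z,\qquad b_2=r_2+y,\qquad b_3=w.
\]
The matching $\{\langle r_1,b_1\rangle,\langle r_2,b_2\rangle\}$ has cost $f(x)+f(y)$, and $\{\langle r_1,b_3\rangle,\langle r_2,b_1\rangle\}$ has cost $f(w)+f(z)$. Every other two-edge matching uses either a long cross edge or a mixed pair:
\begin{itemize}
\item the cross edge $r_1b_2$ has length $x-z+y\approx 2t_1-t_2$;
\item the cross edge $r_2b_3$ has displacement $w-x+z\approx 2t_2-t_1$;
\item the mixed pair $\{\langle r_1,b_3\rangle,\langle r_2,b_2\rangle\}$ has cost $f(w)+f(y)$.
\end{itemize}
Here I use that on these windows $f$ is close to $1$, since $f(t_1)=f(t_2)=1$ and $f$ is continuous and monotone by \cref{thm:monotone,thm:continuous}.

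First, the inequalities defining $\delta_1,\delta_2$ should show that every matching using a cross edge is strictly worse than one of the two target matchings. For instance,
\[
f(x)+f(y)\le 2g_1(t_1+\delta)<g_1(t_1)+g_2(2t_2-t_1)\le f(\text{edge }r_1b_1\text{ or }r_2b_2)+f(\text{cross edge }r_2b_3),
\]
and the symmetric bound with $\delta_2$ handles $r_1b_2$. The monotonicity of $g_1$ (increasing) and $g_2$ (decreasing) lets me compare the actual cross lengths with $2t_1-t_2$ and $2t_2-t_1$. I would check this enumeration carefully, possibly moving $r_2$ so that the cross lengths lie beyond these thresholds.

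The mixed matching cannot be ruled out this way, and it is the step I expect to be the main obstacle. My plan is a case split. If the mixed matching is never strictly better than both targets, the argument above applies directly. Otherwise, compare edges pairwise using the one-red, two-blue configuration from the proof of \cref{thm:unbalcost}: a red point with blue points at displacements $u$ and $v$. Scale invariance of that configuration shows that each single-edge comparison $f(u)\le f(v)$ persists at all scales, at least when it is strict; ties are handled by perturbing $u$ or $v$ and using continuity. Since $f(x)+f(y)\le f(w)+f(z)$ forces $f(x)\le f(w)$ or $f(y)\le f(z)$, I would combine the persistent single-edge comparisons, together with the five-point configuration in which the mixed pair is excluded, to recover $f(sx)+f(sy)\le f(sw)+f(sz)$.

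Boundary equality cases would be finished by continuity of $f$ along rays (\cref{thm:continuous}), perturbing $x,y,w,z$ slightly inside the windows to make the inequality strict and then passing to the limit.
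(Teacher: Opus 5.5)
Your overall plan matches the paper's. The paper uses red--blue--red--blue--red with gaps $x,|w|,y,|z|$; you use blue--red--blue--red--blue with gaps $|w|,x,|z|,y$. In both, the $\delta_1,\delta_2$ inequalities dispose of the three matchings containing a cross edge, and scale invariance of the resulting minimal matching finishes. Your bounds for the cross edges are right, since the cross displacements are exactly $\ge 2t_1-t_2$ and $\le 2t_2-t_1$, so $r_2$ never needs moving. (The edge paired with $r_2b_3$ is $r_1b_1$ or $r_1b_2$, not $r_2b_2$, but the bound is unaffected.)

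The gap is at the step you flag: the mixed matching $\{\langle r_1,b_3\rangle,\langle r_2,b_2\rangle\}$, of cost $f(w)+f(y)$. Your fallback does not close it. In the case you single out, the mixed pair beats both targets, i.e.\ $f(w)<f(x)$ and $f(y)<f(z)$. The one-red/two-blue configuration propagates these to $f(sw)\le f(sx)$ and $f(sy)\le f(sz)$ for all $s$. These two inequalities point in opposite directions relative to the target $f(sx)+f(sy)\le f(sw)+f(sz)$, so no combination of them yields the sum inequality. Only a configuration that actually encodes the comparison of sums can do that, and you never say which five-point configuration ``excludes the mixed pair''.

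The missing idea is just a relabelling. Both hypothesis and conclusion are invariant under $x\leftrightarrow y$ and under $w\leftrightarrow z$. As you yourself observe, the hypothesis forces $f(x)\le f(w)$ or $f(y)\le f(z)$. So before placing the points, choose the labels so that $f(x)\le f(w)$. Then the mixed matching costs $f(w)+f(y)\ge f(x)+f(y)$, so $\{\langle r_1,b_1\rangle,\langle r_2,b_2\rangle\}$ is $f$-minimal outright. Scale invariance then gives the claim directly, with no case split and no single-edge comparisons.

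This is exactly what the paper's ``without loss of generality $x\ge y$ and $-z\ge -w$'' accomplishes. It forces $f(y)\le f(z)$, which is precisely what makes the paper's mixed matching, of cost $f(x)+f(z)$, no cheaper than $f(x)+f(y)$.
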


  \begin{proof}
    We construct a line with alternating blue and red points on it
    with distances $x,w,y,z$ between them respectively as shown in
    Figure \ref{img:lem.sc-inv}.
  It suffices to show that under the hypothesis of the Lemma, the
    matching whose cost is given by $f(x)+f(y)$ is the minimal
    matching. This in addition to scale invariance proves the lemma.

\begin{figure}
    \begin{tikzpicture}%\label{img:lem.sc-inv}
      \foreach \i in {1,...,6}
      {
      %  \draw (-1.5/2,-\i) -- (12.1/2,-\i);
      %\draw [->,green] (0,0) to [out = 315,in=225] (5.5,0);
%      %  \filldraw[red] (0,-\i) circle (2pt);
        \draw[mark=x, color=red,thick,mark options={scale=1.5}] plot coordinates {(0,-\i)};
        \filldraw[blue] (3.1/2,-\i) circle (2pt);
       % \filldraw[red] (5.3/2,-\i) circle (2pt);
          \draw[mark=x, color=red,thick,mark options={scale=1.5}] plot
      coordinates {(5.3/2,-\i)};
        \filldraw[blue] (7.4/2,-\i) circle (2pt);
  %      \filldraw[red] (10.6/2,-\i) circle (2pt);
          \draw[mark=x, color=red,thick,mark options={scale=1.5}] plot
      coordinates {(10.6/2,-\i)};
      }
      \foreach \j in {1,...,6}{\node at (-1,-\j) {($\j$)};}
      \draw[decoration={brace,raise=5pt},decorate]
      (0.02,-0.15) -- node[above=6pt] {$x$} (3.1/2-0.02,-0.15);
      \draw[decoration={brace,raise=5pt},decorate]
      (3.1/2+0.02,-0.15) -- node[above=6pt] {$w$} (5.3/2-.02,-0.15);
      \draw[decoration={brace,raise=5pt},decorate]
      (5.3/2+0.02,-0.15) -- node[above=6pt] {$y$} (7.4/2-0.02,-0.15);
      \draw[decoration={brace,raise=5pt},decorate]
  (7.4/2+0.02,-0.15) -- node[above=6pt] {$z$} (10.6/2-0.02,-0.15);
     % \draw [<->] (0,0.15) -- (3.1/2,0.15) node[midway,above]{$x$};
     % \draw [<->] (3.1/2,0.15) -- (5.3/2,0.15) node[midway,above]{$w$};
     % \draw [<->] (5.3/2,0.15) -- (7.4/2,0.15) node[midway,above]{$y$};
     % \draw [<->] (7.4/2,0.15) -- (10.6/2,0.15) node[midway,above]{$z$};
      \draw [->] (0,-1+0.1) [out = 30, in = 150] to (3.1/2,-1+0.1);
      \draw [->] (5.3/2,-1+0.1) [out=30,in=150] to (7.4/2,-1+0.1);
      \draw [<-] (3.1/2,-2+0.1) [out=30,in=150] to (5.3/2,-2+0.1);
      \draw [<-] (7.4/2,-2+0.1) [out=30,in=150] to (10.6/2,-2+0.1);
      \draw [->] (0,-3+0.1) [out=30,in=150] to (3.1/2,-3+0.1);
      \draw [<-] (7.4/2,-3+0.1) [out=30,in=150] to (10.6/2,-3+0.1);
      \draw [->] (0,-4+0.1) [out=30,in=150] to (7.4/2,-4+0.1);
      \draw [<-] (3.1/2,-4+0.1) [out=30,in=150] to (5.3/2,-4+0.1);
      \draw [<-] (3.1/2,-5+0.1) [out=30,in=150] to (10.6/2,-5+0.1);
      \draw [->] (5.3/2,-5+0.1) [out=30,in=150] to (7.4/2,-5+0.1);
      \draw [<-] (3.1/2,-6+0.1) [out=30,in=150] to (10.6/2,-6+0.1);
      \draw [->] (0,-6+0.1) [out = 30, in =150] to (7.4/2,-6+0.1);
      \node[anchor=north west] at (6,-1) [align=left] {$f(x)+f(y)=g_1(x)+g_1(y)$};
      \node[anchor=north west] at (6,-2) [align=left] {$f(w)+f(z)=g_2(z)+g_2(w)$};
      \node[anchor=north west] at (6,-3) [align=left] {$f(x)+f(z)= g_1(x)+g_2(z)$};
      \node[anchor=north west] at (6,-4) [align=left] {$f(x-w+y)+f(w) =$\\\qquad\qquad$ g_1(x-w+y)+g_2(w)$ };
      \node[anchor=north west] at (6,-5) [align=left] {$f(y)+f(w-y+z) =$\\\qquad$
        g_1(y)+g_2(w-y+z)$};
      \node[anchor= north west] at (6,-6) [align=left] {$f(x-w+y)+f(w-y+z) = $\\\qquad$
      g_1(x-w+y)+g_2(w-y+z)$};
    \end{tikzpicture}
    \caption{Possible matchings and costs for the configuration
    of the points used in the proof of
    \cref{lem:sc-inv}. } \label{img:lem.sc-inv}
    \end{figure}

   From the hypothesis we know that the cost of Matching (1) is
   at most the cost of Matching (2).
   Without loss of generality, assume $x\geq y$ and $-z\geq -w$ this implies $f(x)\geq f(y)$ and
   $f(z)\geq f(w)$. Now if we have $f(y)>f(z)$, this leads to  a contradiction
   since we must have $f(x)+f(y)\leq f(z)+f(w)$. So we must have
   $f(y)\leq f(z)$ which implies $f(y)+f(x)\leq f(z)+f(x)$ which means
   the cost of Matching (1) is at most the cost of Matching (3),
   implying Matching (3) cannot be the minimal matching. In addition
   to this, since Matching (6) has edges with costs strictly larger
   than those in Matching (3), it cannot be minimal either.

   Now, the maximum value of $f(z)+f(w)$ is
   $2g_2(t_2-\delta)$ since $z$ and $w$ were chosen to be in
   $[t_2-\delta,t_2]$ and $f$ is decreasing in this interval. And the
   minimum value of $f(x-w+y)+f(w)$ is $g_2(t_2)+g_1(2t_1-t_2)$ by a
   similar argument. Due to our choice of $\delta$,
   $2g_2(t_2-\delta)\leq g_2(t_2)+g_1(2t_1-t_2)$ which implies that
   the cost of Matching (2) is at most the cost of Matching (4),
   implying Matching (4) cannot be the minimal matching.

   Finally, the maximum value of $f(x)+f(y)$ is $2g_1(t_1+\delta)$ and
   the minimum value of $f(y)+f(w-y+z)$ is $g_1(t_1)+g_2(2t_2-t_1)$.
   Due to our choice of $\delta$ we have that $2g_1(t_1+\delta)\leq
   g_1(t_1)+g_2(2t_2-t_1)$ which implies that the cost of Matching (1)
   is at most the cost of Matching (5), implying that Matching (5)
   cannot be the minimal matching.

   Therefore, we know that for these points and distances, Matching (1) must be the minimal
   matching.
  \end{proof}

  Note that the proof above only works when $x,y\geq 0$ and $z,w\leq
  0$.

  \begin{lemma}[Analogous to Lemma 20 in \cite{minimal}]\label{lem:t1delta}
    For $z,x\in[t_1,t_1+\delta]$ and $s>0$ \[\frac{g_1'(sx)}{g_1'(x)} =
      \frac{g_1'(sz)}{g_1'(z)} ,\] where
    \[
      f(x) =
      \begin{cases}
        g_1(x), & x\geq 0, \\
        g_2(x), & x<0.
      \end{cases}
    \]
\end{lemma}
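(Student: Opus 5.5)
The plan follows the proof of Lemma 20 in \cite{minimal}: turn the two-sided comparison of \cref{lem:sc-inv} into an equality between level sets, and then differentiate. Here $f$ is assumed smooth, as in the setting of \cref{scinvfrommin}, and $g_1$ is non-decreasing and non-constant. Fix $x,z\in[t_1,t_1+\delta]$; by symmetry we may assume $x<z$, and then shrink the interval slightly so that nearby points still lie in $[t_1,t_1+\delta]$. Pick $w,w'\in[t_2-\delta,t_2]$ close to $t_2$. The first step is to apply \cref{lem:sc-inv} in both directions to pairs with equal cost. Suppose $f(x)+f(y)=f(w)+f(w')$. Then $\le$ holds both ways, and \cref{lem:sc-inv} gives
\[ g_1(sx)+g_1(sy)=g_2(sw)+g_2(sw') \quad\text{for all } s>0. \]

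To make this usable, we want the one-parameter family of pairs $(x+h,\,y(h))$ with $g_1(x+h)+g_1(y(h))=C$ for a fixed constant $C$. For such a family, \cref{lem:sc-inv} gives $g_1(s(x+h))+g_1(sy(h))=C_s$, where $C_s$ is independent of $h$. A cleaner route avoids the $g_2$ side altogether: compare the pair $(x+h,\,y-k)$ against the pair $(x,y)$ through a common $g_2$-pair. Both sums equal $g_2(w)+g_2(w')$, so both rescale to the same value $g_2(sw)+g_2(sw')$. We can therefore conclude that equal $g_1$-pair sums in $[t_1,t_1+\delta]$ remain equal after scaling.

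This needs the range of $g_1(x)+g_1(y)$ on $[t_1,t_1+\delta]^2$ to overlap the range of $g_2(w)+g_2(w')$ on $[t_2-\delta,t_2]^2$. Both ranges contain values near $2$, and since $f(t_1)=f(t_2)=1$ they share the endpoint $2$. By continuity and non-constancy, a neighbourhood of pair-sums with value $\ge 2$ is achieved on both sides (after possibly choosing the $\delta$-intervals appropriately). The main obstacle is ensuring this overlap is nondegenerate. If it is not, I would instead pick base points $x_0,y_0$ slightly inside the interval with $g_1(x_0)+g_1(y_0)=g_2(w_0)+g_2(w_0')$, using the intermediate value theorem.

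Having established the claim that $g_1(x)+g_1(y)=g_1(x')+g_1(y')$ implies $g_1(sx)+g_1(sy)=g_1(sx')+g_1(sy')$ for all such pairs, we differentiate. Take the curve $y(h)$ defined by $g_1(x+h)+g_1(y(h))=\mathrm{const}$, with $y(0)=z$. If $g_1'(z)\neq0$, the implicit function theorem gives $y'(0)=-g_1'(x)/g_1'(z)$. The scaled identity $g_1(s(x+h))+g_1(sy(h))=\mathrm{const}$, differentiated at $h=0$, gives
\[ s\,g_1'(sx)+s\,g_1'(sz)\,y'(0)=0, \]
that is, $g_1'(sx)/g_1'(x)=g_1'(sz)/g_1'(z)$. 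This is the claimed identity.

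The case where a derivative vanishes must be handled separately, just as in \cite{minimal}. I would argue that the set where $g_1'\ne0$ is dense, since $g_1$ is not constant on any interval (see the note after \eqref{eq:phantom}). The identity then extends by continuity, interpreted as cross-multiplication $g_1'(sx)g_1'(z)=g_1'(sz)g_1'(x)$.
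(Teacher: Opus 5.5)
Your proposal is correct and follows essentially the paper's proof. Like the paper, you link two $g_1$-pairs with equal sum through a common $g_2$-pair, apply \cref{lem:sc-inv} in both directions to get equal rescaled sums, and differentiate along the level set; you parametrize by the displacement $h$ via the implicit function theorem where the paper parametrizes by the cost increment $\epsilon$ via the inverse $h_1$, and your treatment of the overlap of the pair-sum ranges and of points where $g_1'$ vanishes supplies details the paper passes over.
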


\begin{proof}
Let $\delta$ be as defined earlier.
Now, we choose $\epsilon$ small enough so that $f(x)+\epsilon <
f(t_1+\delta)$ and $f(z)+\epsilon < f(t_1+\delta)$.
Let $w=h_1(f(x)+\epsilon)$ and $y=h_1(f(z)+\epsilon)$.
Now choose $u,v\in[t_2-\delta,t_2]$ such that $u=v=h_2 \left(
  \frac{f(x)+f(y)}{2} \right)$. Now we have
$f(z)+f(w) = f(x)+\epsilon+f(z)=f(x)+f(y) = f(u)+f(v)$.

We can use Lemma \ref{lem:sc-inv} twice  to get $f(su)+f(sv) =
f(sx)+f(sy)$ and then twice again to get $f(su)+f(sv) = f(sz)+f(sw)$.
Note that to use Lemma \ref{lem:sc-inv}, we need four distances, two
positive and two negative.
Therefore we have,
\begin{align*}
  f(sx)+ f(sy) &= f(sz)+f(sw)\\
  g_1(sx)+g_1(sh_1(g_1(z) +\epsilon)) & =
                                        g_1(sz)+g_1(sh_1(g_1(x)+\epsilon)) \\
  \frac{g_1(sh_1(g_1(z) +\epsilon)) - g_1(sz)}{\epsilon} &
  =\frac{g_1(sh_1(g_1(x)+\epsilon)) - g_1(sx) }{\epsilon}.
\end{align*}
Now since this is true for all $\epsilon$ small enough, we take the
limit as $\epsilon\rightarrow 0$ to get
\begin{align*}
  (g_1 \circ sh_1)'(g_1(z)) & =  (g_1 \circ sh_1)'(g_1(x)) \\
  g_1'(sh_1(g_1(z))) sh_1'(g_1(z)) & = g_1'(sh_1(g_1(x)))
                                     sh_1'(g_1(x)) \\
  g_1'(sz) s h_1'(g_1(z)) & = g_1'(sx) s h_1'(g_1(x))\\
  \frac{g_1'(sz)}{g_1'(z)} &= \frac{g_1'(sx)}{g_1'(x)}.
\end{align*}
\end{proof}

Now, we can follow the proofs almost identically as in
\cite[Lemma 21, Lemma 22, Proposition 18]{minimal}, to
conclude that $g_1$ must have the same form as in \eqref{eq:cost1dim}.

\section{Comparison with Balanced matchings (Proof of
  \cref{compwbal})} \label{sec:compwbal}

In this section, we
introduce several lemmas, some of which have been proven in
\cite{minimal} for balanced matchings, and then prove \cref{compwbal}.
Assume $R$ and $B$ are drawn from Poisson point processes of intensity
$1$ on $\R$.

\begin{figure}
  \begin{tabular}{ccccc}
\begin{tikzpicture}
  \node[circle,inner sep=2pt, minimum size = 2pt,fill=gray] (a) at (0,0){};
  \node[circle, inner sep=2pt, minimum size = 2pt, fill=gray] (b) at (1,0){};
  \node[circle , inner sep=2pt, minimum size = 2pt ,fill=gray] (c) at (2,0){};
  \node[circle, inner sep=2pt, minimum size = 2pt , fill=gray] (d) at (3,0){};
  \draw (a) to [bend left=60] (b);
  \draw (c) to [bend left=60] (d);
  \node[rectangle,fill=none,draw=none] at (1.5,1.2){separate edges};
\end{tikzpicture}& \hfill &
    \begin{tikzpicture}
  \node[circle, inner sep=2pt, minimum size = 2pt ,fill=gray] (a) at
  (0,0){};

  \node[circle,inner sep=2pt, minimum size = 2pt,fill=gray] (b) at (1,0){};
  \node[circle,inner sep=2pt, minimum size = 2pt,fill=gray] (c) at (2,0){};
  \node[circle,inner sep=2pt, minimum size = 2pt, fill=gray] (d) at (3,0){};
  \draw (a) to [bend left=60] (c);
  \draw (b) to [bend left=60] (d);
  \node[rectangle,fill=none,draw=none] at (1.5,1.2){entwined edges};
\end{tikzpicture}
    & \hfill &
      \begin{tikzpicture}
  \node[circle,inner sep=2pt, minimum size = 2pt,fill=gray] (a) at (0,0){};
  \node[circle,inner sep=2pt, minimum size = 2pt,fill=gray] (b) at (1,0){};
  \node[circle,inner sep=2pt, minimum size = 2pt,fill=gray] (c) at (2,0){};
  \node[circle,inner sep=2pt, minimum size = 2pt, fill=gray] (d) at (3,0){};
  \draw (a) to [bend left=60] (d);
  \draw (b) to [bend left=60] (c);
  \node[rectangle,fill=none,draw=none] at (1.5,1.2){nested edges};
\end{tikzpicture}\\
    \end{tabular}
\caption{Any two edges in a matching can be classified as either separate, entwined or nested edges}\label{fig:edges}
\end{figure}

\newcommand{\f}[1]
{\begin{tikzpicture}[scale=.75,thick]
  \tikzstyle{every node}=[circle,
                        inner sep=0pt, minimum width=2mm]
  \useasboundingbox (-.5,-1) rectangle (3.5,1);
  #1
  \end{tikzpicture} }
\newcommand{\ff}[1]
{\begin{tikzpicture}[scale=.75,thick]
    \tikzstyle{every node}=[circle,
    inner sep=0pt, minimum width=2mm]
  \useasboundingbox (-.5,-1.3) rectangle (3.5,1);
  #1
  \end{tikzpicture} }
\newcommand{\rrrr}{
  \draw[mark=x, color=red,thick] plot coordinates {(0,0)};
  \draw[mark=x, color=red,thick] plot coordinates {(.8,0)};
  \draw[mark=x, color=red,thick] plot coordinates {(1.8,0)};
    \draw[mark=x, color=red,thick] plot coordinates {(3,0)};
}
\newcommand{\rbbr}{
  \draw[mark=x, color=red,thick, mark options={scale=1.7}] plot coordinates {(0,0)};
  \node[](a) at (0,0){};
\node[fill=blue,minimum size = 0.15cm](b) at (.8,0){};
\node[fill=blue,minimum size = 0.15cm](c) at (1.8,0){};
\draw[mark=x, color=red,thick ,mark options={scale=1.7}] plot coordinates {(3,0)};
\node[](d) at (3,0){};
}
\newcommand{\brbr}{
\node[fill=blue,minimum size = 0.15cm](a) at (0,0){};
\node[](b) at (.8,0){};
\draw[mark=x, color=red,thick ,mark options={scale=1.7}] plot
coordinates {(0.8,0)};
\node[fill=blue,minimum size = 0.15cm](c) at (1.8,0){};
\node[](d) at (3,0){};
\draw[mark=x, color=red,thick ,mark options={scale=1.7}] plot coordinates {(3,0)};
}
\newcommand{\bbrr}{
\node[fill=blue,minimum size = 0.15cm](a) at (0,0){};
\node[fill=blue,minimum size = 0.15cm](b) at (.8,0){};
\node[](c) at (1.8,0){};
\draw[mark=x, color=red,thick ,mark options={scale=1.7}] plot
coordinates {(1.8,0)};
\node[](d) at (3,0){};
\draw[mark=x, color=red,thick ,mark options={scale=1.7}] plot coordinates {(3,0)};
}
\newcommand{\brrb}{
\node[fill=blue,minimum size = 0.15cm](a) at (0,0){};
\node[](b) at (.8,0){};
\draw[mark=x, color=red,thick ,mark options={scale=1.7}] plot
coordinates {(0.8,0)};
\node[](c) at (1.8,0){};
\draw[mark=x, color=red,thick ,mark options={scale=1.7}] plot
coordinates {(1.8,0)};
\node[fill=blue,minimum size = 0.15cm](d) at (3,0){};
}
\newcommand{\rbrb}{
  \node[](a) at (0,0){};
  \draw[mark=x, color=red,thick ,mark options={scale=1.7}] plot coordinates {(0,0)};
\node[fill=blue,minimum size = 0.15cm](b) at (.8,0){};
\node[](c) at (1.8,0){};
\draw[mark=x, color=red,thick ,mark options={scale=1.7}] plot
coordinates {(1.8,0)};
\node[fill=blue ,minimum size = 0.15cm](d) at (3,0){};
}
\newcommand{\rrbb}{
  \node[](a) at (0,0){};
  \draw[mark=x, color=red,thick ,mark options={scale=1.7}] plot
  coordinates {(0,0)};
  \draw[mark=x, color=red,thick ,mark options={scale=1.7}] plot
  coordinates {(0.8,0)};
\node[](b) at (.8,0){};
\node[fill=blue,minimum size = 0.15cm](c) at (1.8,0){};
\node[fill=blue,minimum size = 0.15cm](d) at (3,0){};
}
\newcommand{\sep}{
\draw (a) to [bend left=60] (b);
\draw (c) to [bend left=60] (d);
}
\newcommand{\ent}{
\draw (a) to [bend left=60] (c);
\draw (b) to [bend left=60] (d);
}
\newcommand{\str}{
\draw[densely dashed] (b) to [bend right=60] (c);
\draw[densely dashed] (a) to [bend right=60] (d);
}
\newcommand{\sstr}{
\draw (b) to [bend right=60] (c);
\draw (a) to [bend right=60] (d);
}
\newcommand{\rightab}{\draw[->] (a) to [bend left=60](b);}
\newcommand{\rightac}{\draw[->] (a) to [bend left = 60](c);}
\newcommand{\rightad}{\draw[->] (a) to [bend right=60](d);}
\newcommand{\rightbc}{\draw[->] (b) to [bend right=60](c);}
\newcommand{\rightbd}{\draw[->] (b) to [bend left=60](d);}
\newcommand{\rightcd}{\draw[->] (c) to [bend left=60](d);}

\newcommand{\leftab}{\draw[<-] (a) to [bend left=60](b);}
\newcommand{\leftac}{\draw[<-] (a) to [bend left=60](c);}
\newcommand{\leftad}{\draw[<-] (a) to [bend right=60](d);}
\newcommand{\leftbc}{\draw[<-] (b) to [bend right=60](c);}
\newcommand{\leftbd}{\draw[<-] (b) to [bend left=60](d);}
\newcommand{\leftcd}{\draw[<-] (c) to [bend left=60](d);}

\newcommand{\dleftad}{\draw[densely dashed, <-] (a) to [bend
  right=60](d);}
\newcommand{\dleftbc}{\draw[densely dashed, <-] (b) to [bend right =
  60](c);}
\newcommand{\drightad}{\draw[densely dashed,->] (a) to [bend right =
  60](d);}
\newcommand{\drightbc}{\draw[densely dashed,->] (b) to [bend
  right=60](c);}

\begin{figure}\label{fig:1dim}
\centering
\begin{tabular}{|>{\centering\arraybackslash} m{.01cm}
>{\centering\arraybackslash} m{1cm}|
>{\centering\arraybackslash} m{3.2cm}>{\centering\arraybackslash} m{3.2cm}>{\centering\arraybackslash} m{3.2cm}|}
  \cline{3-5}

  \multicolumn{2}{c|}{}& $\gamma<1$ & $\gamma=1$ & $\gamma>1$ \\
  \multicolumn{2}{c|}{}& (Subcritical $\gamma$) & (Critical $\gamma$) & (Supercritical $\gamma$) \\

  \hline
 & \textcolor{red}{r}\textcolor{blue}{bb}\textcolor{red}{r} & \f{\rbbr\rightab\leftcd} & \f{\rbbr\rightab\leftcd} & \f{\rbbr\rightab\leftcd} \\
   & \textcolor{blue}{b}\textcolor{red}{rr}\textcolor{blue}{b} & \f{\brrb\leftab\rightcd} & \f{\brrb\leftab\rightcd} & \f{\brrb\leftab\rightcd} \\
                        & \textcolor{red}{r}\textcolor{blue}{b}\textcolor{red}{r}\textcolor{blue}{b} &
   \f{\rbrb\rightab\rightcd\drightad\dleftbc} & \f{\rbrb\rightab\rightcd} & \f{\rbrb\rightab\rightcd} \\

 & \textcolor{blue}{b}\textcolor{red}{r}\textcolor{blue}{b}\textcolor{red}{r} &
 \f{\brbr\leftab\leftcd\dleftad\drightbc} & \f{\brbr\leftab\leftcd} & \f{\brbr\leftab\leftcd} \\

 & \textcolor{red}{rr}\textcolor{blue}{bb} & \f{\rrbb\rightad\rightbc} &
                                                              \ff{\rrbb\rightac\rightbd\drightad\drightbc
    \node[rectangle,fill=none,draw=none] at (1.0,.95){$(\gamma=1+)$};
    \node[rectangle,fill=none,draw=none] at (1.0,-1.15){$(\gamma=1-)$};
 }
 &\f{\rrbb\rightac\rightbd} \\

 & \textcolor{blue}{bb}\textcolor{red}{rr} & \f{\bbrr\leftad\leftbc} &
 \ff{\bbrr\leftac\leftbd\dleftad\dleftbc
 \node[rectangle,fill=none,draw=none] at (2,.95){$(\gamma=1+)$};
 \node[rectangle,fill=none,draw=none] at (2,-1.15){$(\gamma=1-)$};
 }
 & \f{\bbrr\leftac\leftbd} \\
 \hline
\end{tabular}
  \caption{Possible arrangements (separate, nested, or entwined) of
  two edges in a $\gamma$-minimal matching on $\R$.  The figures
  indicate only the order of the points, not their distances.  Solid
  lines and dashed lines indicate two different possible matchings;
  in the bottom row with $\gamma=1$ both possibilities are $1$-minimal
  (with the tie broken as indicated for $\gamma=1\pm$), while in the
  other two cases the minimal choice depends on the distances between
  points, and the values of the two parameters $a$ and $\gamma$.} \label{possibilities}
\end{figure}

Any two edges of a matching on $\R$ must be either
entwined, nested, or separate, as shown in \cref{fig:edges}. In
\cref{possibilities}, we list possibilities for the minimal matching
for the different configurations of points.

For supercritical values
of $\gamma$ (i.e., $\gamma\in\{1^+\}\cup(1,\infty]$), given the
relative positions of the points,  there is only
one possibility for the minimal matching. This matching does not depend on
$\gamma$, $a$ or the distances between the points, it only depends on
the relative positions of the points. On the other hand, for subcritical $\gamma$ (i.e.,
$\gamma\in[-\infty,1)\cup\{1^-\}$) it is not as simple and there are configurations where the
minimal matching depends on the parameters and the distances between
the points. In \cref{super} below, we discuss what supercritical
minimal matchings look like.

\begin{lemma}\label{super}
  If the set of red
points is given by $R=\{ \cdots<r_{-1}<r_0<r_1<\cdots \}$ and the
set of blue points is given by $B=\{
\cdots<b_{-1}<b_0<b_1<\cdots\}$, any supercritical minimal matchings will be of the
form
\begin{equation}\label{eq:super}
\widetilde{M}_k:=\left\{ \langle r_{i+k},b_i \rangle :i\in\Z
\right\}
\end{equation}
for some $k\in\Z$.
\end{lemma}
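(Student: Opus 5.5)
The plan is to show that in any supercritical minimal matching $M$ no two edges are nested and no two edges are separate in the ``wrong'' way, and then deduce the form \eqref{eq:super} by an order-preservation argument.

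\textbf{Step 1: local rules from \cref{possibilities}.} For $\gamma\in\{1^+\}\cup(1,\infty]$, I verify the supercritical column of \cref{possibilities} for every ordering of four points: red at $r<r'$, blue at $b<b'$, with two matched pairs. The ordering $rrbb$ is one example. The nested matching $\{\langle r,b'\rangle,\langle r',b\rangle\}$ has cost $f(x+y+z)+f(y)$. The entwined matching has cost $f(x+y)+f(y+z)$. Since both edges point the same way, both costs carry the same coefficient ($1$ for red-blue, $a$ for blue-red), so $a$ cancels. Strict convexity of $t\mapsto t^\gamma$ for $\gamma>1$, or the $1^+$ tie-break, then shows the entwined matching is strictly cheaper. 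For the orderings $rbrb$ and $brbr$, the nested alternative contains one red-blue and one blue-red edge whose lengths dominate those of the separate edges, and the mixed-orientation case is exactly where the factor $a$ could matter. In that case I compare each nested edge with a separate edge of the same orientation. In $rbrb$, the nested pair is $\langle r,b'\rangle$ (red-blue, length $x+y+z$) and $\langle r',b\rangle$ (blue-red, length $y$). The separate pair is $\langle r,b\rangle$ (length $x$) and $\langle r',b'\rangle$ (length $z$), both red-blue. Here a direct comparison fails, since $a$ could be tiny. Instead I use superadditivity $(x+y+z)^\gamma\ge x^\gamma+z^\gamma$, which gives $f(x+y+z)> f(x)+f(z)$, while the blue-red edge has nonnegative cost, after normalising $b=0$ in \eqref{eq:tmp1}. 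So separate beats nested regardless of $a$.

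\textbf{Step 2: global consequence.} Every pair of edges of a supercritical minimal matching is therefore entwined or separate in the ``monotone'' sense. That is, if $\langle r,b\rangle,\langle r',b'\rangle\in M$ with $r<r'$, then $b<b'$. So $M$ restricted to matched points is an order-preserving bijection between subsets of $R$ and $B$. Minimal matchings here are perfect, by the adaptation of \cite[Theorem 1]{minimal} noted in the introduction. An order-preserving bijection $\Z\to\Z$ between two bi-infinite increasing sequences must be a shift $i\mapsto i+k$, which is exactly \eqref{eq:super}.

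\textbf{Main obstacle.} I expect the hard part to be the mixed-orientation comparisons in Step 1, since $a$ genuinely appears there. I would handle them uniformly by observing that in each forbidden (nested) configuration, both separate edges are sub-intervals of the long nested edge, and the long edge has the same orientation as at least one of them. I would try that superadditivity argument first. Any remaining case ($rbbr$, $brrb$) is forced anyway, because only one matching respects colours there. The $\gamma=\infty$ case would follow by the same inequalities in the limit, or directly from minimising the longest edge.
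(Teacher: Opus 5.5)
Your route is the same as the paper's. The paper's proof just cites the supercritical column of \cref{possibilities} via \cite[Section 4]{minimal} to get that a minimal matching preserves the relative order of reds and blues, and then uses perfectness to conclude it is a shift. Your superadditivity check for $rbrb$ and $brbr$ is a useful explicit confirmation that $a$ plays no role there, which the paper does not spell out. However, one step is false as stated and needs fixing.

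\textbf{The error.} You dismiss $rbbr$ and $brrb$ because ``only one matching respects colours there.'' With $r<b<b'<r'$, both $\{\langle r,b\rangle,\langle r',b'\rangle\}$ (separate) and $\{\langle r,b'\rangle,\langle r',b\rangle\}$ (entwined) are colour-respecting matchings. The second is order-reversing, since $r<r'$ but $b'>b$. This is precisely a configuration Step~2 must exclude. Your opening phrase ``separate in the wrong way'' has the categories mixed up:
\begin{itemize}
\item separate pairs are always order-preserving;
\item nested pairs are always order-reversing;
\item the only order-reversing non-nested pair is the mixed-orientation entwined pair in $rbbr$ and $brrb$.
\end{itemize}
As written, your Step~1 never rules out that last configuration, so Step~2 does not yet follow.

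\textbf{The repair.} In $rbbr$ with consecutive gaps $x,y,z$:
\begin{itemize}
\item the entwined matching has a red-blue edge of length $x+y$ and a blue-red edge of length $y+z$;
\item the separate matching has a red-blue edge of length $x$ and a blue-red edge of length $z$.
\end{itemize}
Comparing edges of the same orientation gives $x^\gamma<(x+y)^\gamma$ and $a z^\gamma<a(y+z)^\gamma$. So the separate matching is strictly cheaper for every $a>0$ and every supercritical $\gamma$, including $1^+$ (the inequality is already strict at $\gamma=1$) and $\infty$ (the longest edge strictly decreases). In fact this holds for any cost increasing in length along each ray. The case $brrb$ is the mirror image. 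With this added, your four-point analysis covers all six orderings, and Step~2, together with perfectness from the adaptation of \cite[Theorem 1]{minimal}, goes through as written.
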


\begin{proof}
  This result follows verbatim from the proof presented in
 \cite[Section 4]{minimal} for balanced supercritical
  matchings. The
  proof uses observations from \cref{possibilities} to conclude that
  for supercritical $\gamma$, the minimal matching must always
  preserve relative orders between the red and blue points (see
  \cref{fig:super} for examples of such matchings). These matchings
  are all non-invariant, non-factor matchings and locally finite.
\end{proof}

\begin{figure}[h]
  \centering
  \includegraphics[trim = {3cm 1cm 3cm
    2cm},clip,width=10cm]{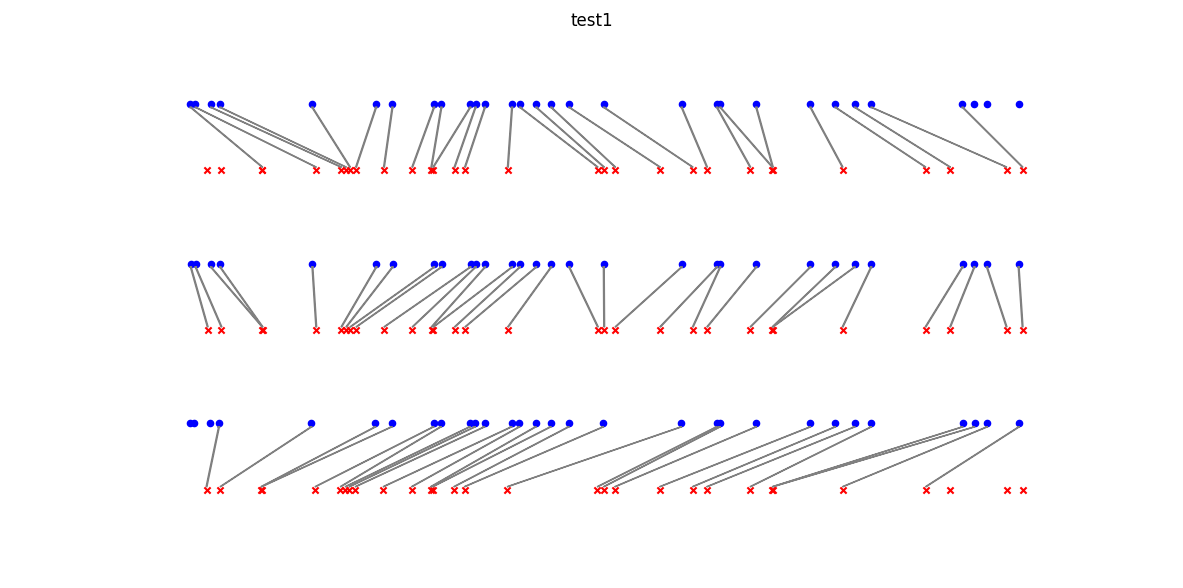}
  \caption{Possible $(\gamma,a)$-minimal matchings for the supercritical
    case, i.e., $\gamma\in(1,\infty)\cup\{1^+\}$. Note that the
    matchings all preserve the relative
   orders of the red and blue points. }\label{fig:super}
\end{figure}

Below, we introduce an interpretation of the minimal matchings which arise
from random walks. This extremely useful formulation is from
\cite{minimal}.
Given disjoint discrete
sets $R,B\subset\R$ we define the associated {walk} $W=W_{R,B}:\R\to\Z$ by
\begin{equation}
\begin{aligned}
  W(0-)&=-1 \qquad \mbox{and}\\
  W(y)-W(x)&=\#\bigl(R\cap(x,y]\bigr)-\#\bigl(B\cap(x,y]\bigr),\quad x<y.
\end{aligned}\label{walk}
\end{equation}
The walk takes a step up at every red point and down at every blue point. We
assume $W(0^-)=-1$ and $W(0^+)=0$ for convenience, since we use the
Palm process assumed to have a red point at $0$. If $R$
and $B$ are independent Poisson processes of intensity $1$ on $\R$, $W$
gives us a
continuous-time simple symmetric random walk. A possible draw of the
continuous time random walk has been shown in \cref{fig:rw}. This
partitions the points of the two Poisson point processes into levels,
which we define below.

\begin{figure}[h]
  \centering
  \includegraphics[trim={5cm 0 5cm 1.5cm},clip,width=12cm]{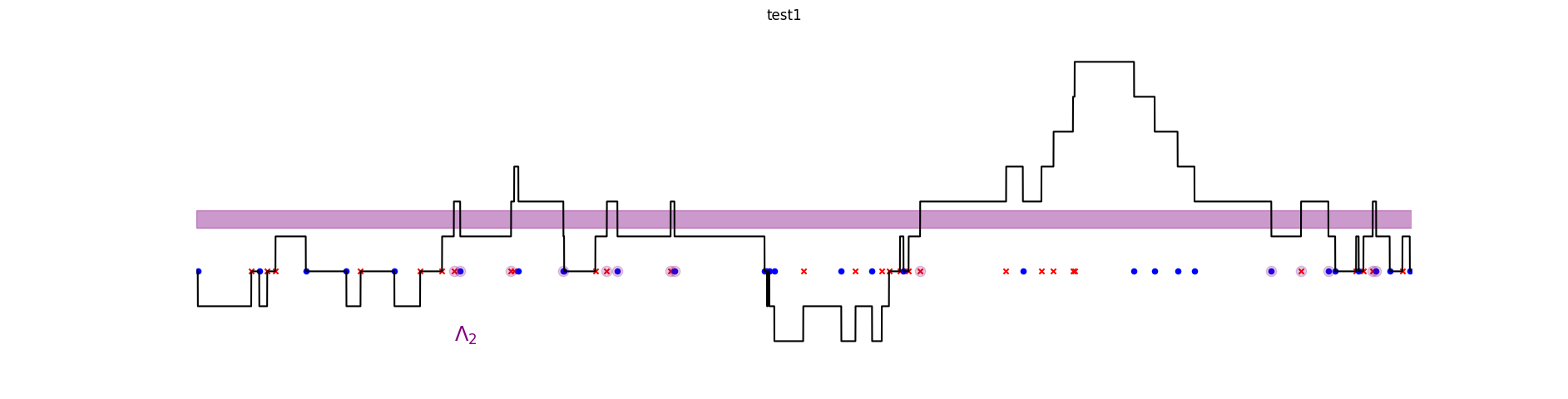}
  \caption{Random walk formulation for the matchings for the sub
    critical case $\gamma<1$. Highlighted is the level $\Lambda_2$
    along with all the points in it. For a subcritical minimal
    matching, points in each level are always matched to each other.}
  \label{fig:rw}
\end{figure}

For $k\in\Z$, we define {level} $k$ to be the set
$\Lambda_k=\Lambda_k(R,B)$ of points where the walk moves between $k$
and $k+1$
\begin{equation}\label{level}
  \Lambda_k:=\Bigl\{x\in \R:\{W(x-),W(x+)\}=\{k,k+1\}\Bigr\}.
\end{equation}
The levels $(\Lambda_k)_{k\in \Z}$ form a partition of
$R\cup B$ and within a given level, the elements alternate
between red and blue points. In \cref{fig:rw}, the level $\Lambda_2$
has been highlighted along with the points in that level.

\begin{lemma}[Lemma 25 of \cite{minimal}]\label{lem:levels}
  For subcritical minimal matchings, any given point will only be
  matched to other  points in the same level.
\end{lemma}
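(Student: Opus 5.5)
The plan is to reduce the statement to a purely combinatorial fact about two edges of a subcritical minimal matching, and then read that fact off from \cref{possibilities}. By \cref{def:mm}(iii), any two edges $\langle x,M(x)\rangle$ and $\langle x',M(x')\rangle$ of a minimal matching must themselves form the minimal matching of the four points involved. So it suffices to show that no edge can join points lying in different levels.

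\textbf{Step 1: the level of an edge.} Suppose $\langle r,b\rangle$ is an edge. By \eqref{walk}, the walk $W$ changes by $\#R-\#B$ over the open interval between $r$ and $b$. The two endpoints lie in the same level exactly when the number of red points strictly between them equals the number of blue points strictly between them. Recall that one endpoint is a step up and the other a step down, so the step up at $r$ and the step down at $b$ land on the same pair $\{k,k+1\}$ precisely when the walk returns to the same height. Hence if $r$ and $b$ are in different levels, the interval between them is unbalanced: it contains, say, strictly more red than blue points (the other case is symmetric).

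\textbf{Step 2: find a crossing partner.} In that unbalanced case, I will show that some inner point $p$ of the surplus colour is matched to a point $M(p)$ that either lies outside the interval or forms with $\langle r,b\rangle$ a forbidden configuration. I would argue by minimality: choose the edge $\langle r,b\rangle$ among level-violating edges with the interval between $r$ and $b$ minimal under inclusion. This is possible because the points are locally finite, so intervals contain finitely many points. Points inside the interval cannot all be matched among themselves, since the counts differ. Therefore some inner point $p$ is matched outside the interval, and $\langle p,M(p)\rangle$ is entwined with $\langle r,b\rangle$. Because $p$ has the surplus colour, $p$ has the same colour as $r$ or the opposite colour of $b$; I expect this to force the pair to be of type $rrbb$, $bbrr$, or the type where equal colours sit at the two outer positions. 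I need to double-check these orientations case by case.

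\textbf{Step 3: contradiction from \cref{possibilities}.} For subcritical $\gamma$ (including $1^-$ and $-\infty$), the table shows that the entwined arrangement is never minimal in the $rrbb$ and $bbrr$ configurations; the nested edges are always strictly cheaper because of the concavity-type inequality. For configurations $rbrb$ and $brbr$, the entwined pair has opposite orientation and is also excluded. For the unbalanced cost this uses $\gamma<1$: when the two edges share an orientation, the coefficient $a$ factors out of both sides of the inequality $x_1^\gamma+x_4^\gamma<x_2^\gamma+x_3^\gamma$, which holds for nested versus entwined edges because $t\mapsto t^\gamma$ is strictly concave with the same total span.

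\textbf{Main obstacle.} I expect Step 2 to be the hardest: pinning down exactly which colours the crossing partner can have, and ruling out infinite chains of nested crossings. For an edge of infinite extent, local finiteness of $M$ may also need an argument. I would handle this through minimality of the chosen interval, together with parity counting of the walk inside it.
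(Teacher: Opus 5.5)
Your proposal is correct and is essentially the paper's argument. An edge joining two levels spans an interval with unequal red and blue counts, so (the matching being perfect) some interior point is matched outside it, producing an entwined pair, which \cref{possibilities} forbids for subcritical $\gamma$. The minimal-interval choice, the surplus-colour bookkeeping and the ``main obstacle'' concerns are unnecessary: any entwined pair automatically has colour pattern $rrbb$, $bbrr$, $rbbr$ or $brrb$, and all four are excluded in the table. (In Step 3 you mean $rbbr$ and $brrb$, where the separate pair is strictly cheaper by monotonicity; $rbrb$ and $brbr$ admit no entwined matching at all.)
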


\begin{proof}
 From \cref{possibilities}, we know subcritical minimal matchings do
  not prefer entwined edges, and matching a point to another level
  introduces entwined edges. (See \cite[Section 4]{minimal} for a
 more detailed proof.)
\end{proof}

For $\gamma=1^-$, we know from \cref{possibilities} that separate
edges are always preferred over nested edges. Therefore, in the minimal
matching for a given level $\Lambda_j = \{\cdots<r_{-1}<b_{-1}<r_{0}<b_0<r_1<b_1<\cdots\}$, either each red point is matched to the blue point to its right
resulting in a matching entirely of red-blue edges or each red point
is matched to the blue point to its left resulting in a matching
 entirely of red-blue edges.  We call these possible matchings of the
 level $m^j_{rb} = \{\langle r_i,b_i \rangle\}_{i\in\Z}$ and  $
 m^j_{br} = \{\langle r_i,b_{i-1} \rangle\}_{i\in\Z}$ respectively.
 Both of these matchings
will be minimal regardless of the value of $a$ since the cost of the matching cannot be
reduced by a reordering of finitely many edges. Now, we discuss the
family of
$(1^-,a)$-minimal matchings.

\begin{lemma}\label{lem:1minus}
  For $\gamma=1^-$ and any $a\in(0,\infty)$, the family of
  $(1^-,a)$-minimal matchings will be given by
  $\{M_\infty,M_{-\infty}\}\cup\{M_k\}_{k\in\Z}$ where
  \begin{equation}
    \label{eq:1minus}
    M_{-\infty}  = \bigcup_{j\in\Z} m^j_{rb} , \quad
    M_{\infty} = \bigcup_{j\in\Z} m^j_{br} ,  \quad  \mbox{and} \quad M_k  =\left\{\bigcup_{j>k}
  m^j_{rb} \right\} \cup \left\{\bigcup_{j\leq k} m^j_{br}\right\}.
  \end{equation}
Here, $M_\infty$ and $M_{-\infty}$ are invariant and locally infinite
whereas all the $M_{k}$ are non-invariant and locally finite.
\end{lemma}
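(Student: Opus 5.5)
By \cref{lem:levels}, every $(1^-,a)$-minimal matching $M$ matches points only within a level. So $M$ is a union of perfect matchings of the individual levels $\Lambda_j$. The plan has three steps: first determine which matchings of a single level can occur, then determine which combinations across levels are compatible with minimality, and finally check invariance and local finiteness.

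\textbf{Step 1: a single level.} Within $\Lambda_j$ the points alternate red and blue. Any perfect matching of an alternating sequence that avoids nested pairs of edges must match each point to an adjacent point of the same level. This is because an edge skipping over some points of the level forces an edge inside it, and that edge is nested in it. \cref{possibilities} shows that for $\gamma=1^-$ separate edges are strictly preferred over nested ones, regardless of $a$. The reason is that $(u+v+w)^\gamma+a' w^\gamma$ versus $u^\gamma+a'' v^\gamma$ is decided in the limit $\gamma\to1^-$ by the linear term. Hence the restriction of $M$ to $\Lambda_j$ is a nearest-neighbour matching of a bi-infinite alternating sequence, so it is either $m^j_{rb}$ or $m^j_{br}$. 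A mixed choice would leave a point stranded between two differently oriented runs. Conversely, each of these two matchings of a level is locally optimal, as already noted before the lemma.

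\textbf{Step 2: compatibility between levels.} This is the main step, and the main obstacle. I need to show that if level $j$ uses $m^j_{br}$, then every level $j'<j$ also uses $m^{j'}_{br}$. Equivalently, the set of levels using $m_{rb}$ is an up-set in $\Z$, which yields exactly $M_{-\infty}$, $M_\infty$ or some $M_k$.

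To prove this I would take adjacent levels $j$ and $j+1$, with $j$ using $rb$ and $j+1$ using $br$. In the walk picture $W$, I would locate four points, two in each level, on which these edges form a configuration from the rows $rrbb$ or $bbrr$ of \cref{possibilities}. For $\gamma=1$ both matchings of such a configuration have equal cost. Here, however, the edges are separate within each level, and swapping partners across levels creates entwined edges. So the comparison I actually need is between the current edges and the alternative obtained by a finite swap, and the $1^-$ tie-break must favour the swap. Concretely, I would find a finite window where the walk goes $j\to j+1\to j+2\to j+1\to j$ or similar. There the union of the two level-matchings on the window can be rematched with strictly smaller $1^-$ cost, because the linear costs agree and the concavity correction is strictly negative. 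The key point to check is that the linear term is independent of $a$ in the comparison, since both configurations use the same number of $rb$ and $br$ edges with the same total signed length. Such windows exist almost surely between any two adjacent levels.

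Conversely, I need that every listed matching is minimal. Any finite rematching either stays within levels, where it cannot improve because each level is optimal, or crosses levels, which creates entwined pairs that $\gamma<1$ disfavours. For this I would appeal to the argument of \cref{lem:levels} in the limit $\gamma\to1^-$.

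\textbf{Step 3: invariance and local finiteness.} For $M_{\pm\infty}$ the rule is translation-equivariant, so these matchings are invariant. The walk $W$ is recurrent, so each point $z$ is crossed by an edge of every level $j$ near the range visited, and there are infinitely many such levels crossing $z$. Hence $M_{\pm\infty}$ are locally infinite. For $M_k$, an edge of level $j$ crosses $z$ for only finitely many $j$: levels with $j$ far above $W(z)$ use one orientation and those far below use the other, and in each regime only the level adjacent to $W(z)$ produces a crossing edge. So $M_k$ is locally finite. Finally, $M_k$ depends on the origin-anchored level labelling fixed by $W(0^-)=-1$, so it is not invariant.
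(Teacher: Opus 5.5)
There is a genuine gap in Step 2, which you rightly flag as the main step: it rests on a misidentified mechanism and would not go through as written. Your skeleton is otherwise right: restrict to levels, show each level is $m^j_{rb}$ or $m^j_{br}$, then show the $rb$-levels form an up-set. The paper itself gives no argument beyond citing \cite[Theorem 2(iii)]{minimal} and noting that it depends only on the walk $W$.

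To see what actually happens in Step 2, suppose level $j$ uses $m^j_{rb}$ and level $j+1$ uses $m^{j+1}_{br}$. Take a level-$j$ edge $\langle r_1,b_2\rangle$ spanning an excursion of $W$ above $j+1$ that reaches $j+2$; recurrence provides one. Let $r_2\in(r_1,b_2)$ be the first up-step from $j+1$ to $j+2$. Under $m^{j+1}_{br}$, $r_2$ is matched to the previous level-$(j+1)$ point, a blue point $b'$. Since $W\equiv j+1$ on $(r_1,r_2)$ and $r_1$ is a level-$j$ step, we get $b'<r_1$. So the points of your window are not matched among themselves.

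The offending pair is $\langle b',r_2\rangle,\langle r_1,b_2\rangle$ with $b'<r_1<r_2<b_2$. This is an entwined pair of mixed orientation (row $brrb$, not $rrbb$ or $bbrr$). It is the current matching that is entwined, not the swapped one, so your remark that the $1^-$ tie-break must favour a swap creating entwined edges has the direction reversed. Replacing the pair by $\langle b',r_1\rangle,\langle r_2,b_2\rangle$ keeps both orientations and strictly shortens both edges. The cost therefore drops for every increasing cost, every $a$ and every $\gamma$. The linear costs differ by $(1+a)(r_2-r_1)$, so they do not agree, and no concavity correction is involved.

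Your ``key point'' is also false in general. The quantity $\sum_{rb}\ell_e-\sum_{br}\ell_e=S$ is fixed by the point set, and the linear cost equals $S+(1+a)\sum_{br}\ell_e$. The numbers of $rb$ and $br$ edges do not determine this.

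The converse, that every listed matching is minimal, is not proved by your sentence. The levels of a finite sub-configuration are not the levels of $W$, and a rematching that introduces entwined pairs is not automatically more expensive. One way to close it is as follows. Fix a finite set of edges of $M$ and any rematching of their endpoints. Let $n_{rb}(t),n_{br}(t)$ count its edges of each orientation crossing $t$, and let $D(t)$ be the number of red minus blue endpoints left of $t$. Its linear cost is $\int (n_{rb}(t)+a\,n_{br}(t))\,dt\ge\int (D(t)^++aD(t)^-)\,dt$, with equality iff no $t$ is crossed by edges of both orientations. $M_k$ attains equality because its $rb$ edges lie in $\{W\ge k+2\}$ and its $br$ edges in $\{W\le k\}$; for $M_{\pm\infty}$ this is trivial. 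Among linear minimizers, the first-order term $-(1-\gamma)\sum c_e\ell_e\log\ell_e$ is uniquely minimized by the non-crossing matching on each excursion of $D$ away from $0$, and that matching is the restriction of $M$.

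Smaller points:
\begin{itemize}
\item In Step 1, the skipped points may also be matched outside the long edge, creating entwined pairs, and these must be excluded too.
\item In Step 3, a point $z$ is crossed in $M_k$ by one edge from each level strictly between $k$ and $W(z)$. That is about $|W(z)-k-1|$ edges, not only one from an adjacent level, though the count is still finite.
\item ``Depends on the origin-anchored labelling'' does not prove non-invariance, since $M_{\pm\infty}$ use the same levels. One needs, for example, that this crossing number is tight at $z=0$ but not as $|z|\to\infty$, which an invariant scheme cannot have.
\end{itemize}
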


The proof of \cref{lem:1minus}, can be found in \cite[Section 5, Theorem
2(iii)]{minimal}. Since this proof only depends on the
random walk defined in \eqref{walk}, which only depends on the
configuration of the points, the same proof holds for unbalanced costs
as well.  Some possible
$(1^-,a)$-minimal matchings have been illustrated in \cref{fig:1minus}.

\begin{figure}[h]
  \centering
  \begin{tikzpicture}
    \node at (0,0) {\includegraphics[trim={6cm 2cm 5cm
    2cm},clip,width=11cm]{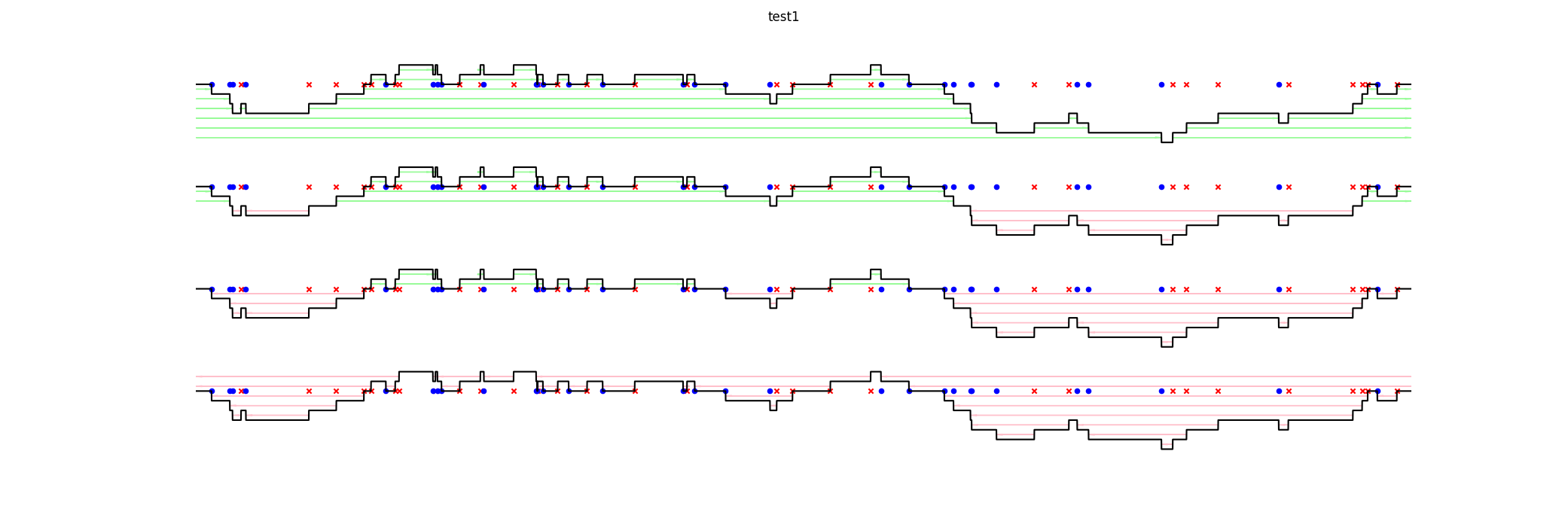}};
\draw (-6,1.5) node {$M_{-\infty}$};
\draw (-6,0.6) node {$M_{-2}$};
\draw (-6,-0.3) node {$M_0$};
\draw (-6,-1.2) node {$M_{\infty}$};
  \end{tikzpicture}
    \caption{Four possible $(1^-,a)$-minimal matchings. The first matching
  called $M_{\infty}$ consists only of red-blue edges, and the last
  matching called $M_{-\infty}$ consists only of blue-red edges. These
  two are locally infinite and invariant. The middle two matchings are
  $M_{-2}$ and $M_0$ respectively. They belong to the family of
  locally finite, non-invariant
  matchings $\{M_k\}_{k\in\Z}$, all of which are $(1^-,a)$-minimal
  matchings and consist of red-blue for all levels
  $\{\Lambda_n\}_{n\geq k}$ above level $k$ and blue-red edges for all levels
  $\{\Lambda_n\}_{n<k}$ below level $k$ (See \cref{lem:1minus}). }
  \label{fig:1minus}
\end{figure}

For $\gamma<1$, while \cref{lem:levels} holds, it is not necessarily
true that each level must only consist of separate edges. And indeed
there are sometimes nested edges in a given level.
Below we introduce an extremely useful property of $(\gamma,a)$-minimal matchings for $\gamma<1$ called
{\em quasistability}   which is
analogous to the notion of stability (introduced in \cite{galeshapley} and studied in the context of
matchings between Poisson points in \cite{hpps}) but with an extra multiplicative
constant $\kappa$. \cref{quasi} is the unbalanced analogue of
\cite[Proposition 3.2]{minimal}.

\begin{proposition}[Quasistability]\label{quasi}
  For each $\gamma\in[-\infty,1)$ and fixed $a\in(0,\infty)$, there exists
  $\kappa=\kappa(\gamma,a)\in[1,\infty)$ with the following property. If $M$ is a $(\gamma,a)$-minimal  matching between two sets
   $R,B\subset\R$
  and $x$ and $y$ are two distinct points of different colours not
  matched to each other then,
  \begin{equation}\label{kappa-ineq}
    |x-M(x)|\wedge|y-M(y)|\leq \kappa\,|x-y|.
  \end{equation}

\end{proposition}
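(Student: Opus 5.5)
Suppose $x$ and $y$ are points of opposite colours that are not matched to each other, and set $u=|x-M(x)|$, $v=|y-M(y)|$, $t=|x-y|$. Since $M$ is $(\gamma,a)$-minimal, its restriction to the two edges $\langle x,M(x)\rangle$ and $\langle y,M(y)\rangle$ (written with the red endpoint first) must be minimal among the two possible matchings of these four points. The alternative matching uses the edge joining $x$ to $y$ and the edge joining $M(x)$ to $M(y)$. So the plan is to prove the following claim: if $u\wedge v>\kappa t$, then this swap strictly lowers the cost, contradicting minimality. Without loss of generality $u\le v$. By the triangle inequality, $|M(x)-M(y)|\le u+t+v$.

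Let $c_a=\max(a,1/a)$. Every edge in either matching has cost between $g(\ell)$ and $c_a\,g(\ell)$, where $g(\ell)=\ell^\gamma$ for $\gamma\in(0,1)$. The cases $\gamma\le 0$ are handled analogously, with signs flipped; the log case has additive constant $\log a$. I would treat the case $\gamma\in(0,1)$ first. The original cost is at least $u^\gamma+v^\gamma$. The swapped cost is at most $c_a\bigl(t^\gamma+(u+v+t)^\gamma\bigr)$. By concavity, $(u+v+t)^\gamma\le v^\gamma+\gamma v^{\gamma-1}(u+t)$.

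Here is the main obstacle. The factor $c_a$ multiplies $v^\gamma$, and $v$ may be much larger than $u$. So a crude bound of every edge cost by $c_a$ times its length-power fails. The fix I intend is to keep track of orientations rather than bounding them away. The edge between $M(x)$ and $M(y)$ and the original edge at $y$ of length $v$ have nearly the same length when $v\gg u+t$. I also expect them to have the same orientation in that regime: both $M(x)$ and $M(y)$ lie on the same side of $y$ at distance about $v$, so that $M(y)-y$ and $M(x)-M(y)$ point consistently. So I would split into two subcases.

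In the first subcase, $v\le Cu$ for a large constant $C$. Then every length is comparable to $u$, and the swapped cost is at most $c_a\bigl(t^\gamma+((C+1)u+t)^\gamma\bigr)$. I want this to be strictly less than $u^\gamma+v^\gamma$, and I need to check that this genuinely works as $t/u\to0$. It does not hold automatically when $c_a>1$. So I would redo this subcase using the exact coefficients: I will compare the coefficient of the long swapped edge with the coefficients of the original edges, which requires a geometric case analysis of the four orderings of the points (as in \cref{possibilities}).

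In the second subcase, $v>Cu$. The long edges have matching orientations with the same coefficient $\alpha\in\{1,a\}$. The difference in cost is then at most $c_a t^\gamma+\alpha\gamma v^{\gamma-1}(u+t)-u^\gamma$. Using $v^{\gamma-1}\le u^{\gamma-1}C^{\gamma-1}$ and $t<u/\kappa$, this is at most $u^\gamma\bigl(c_a\kappa^{-\gamma}+\alpha\gamma C^{\gamma-1}(1+1/\kappa)-1\bigr)$, which is negative once $C$ and $\kappa$ are large.

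If the orientations in the first subcase really do obstruct the bound, the fallback is the one-dimensional structure. By \cref{lem:levels}, points of $x$ and $y$ which are not in the same level would create entwined edges, and within a level the edges are separate or nested. This restricts the relative orderings to the few cases in \cref{possibilities}, and in each case the orientations of the edges can be read off directly. The case $\gamma=-\infty$ is the $a$-independent stable matching, so $\kappa=1$ works there.
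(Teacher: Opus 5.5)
Your framework matches the paper's. You use the two-edge swap and crude coefficient bounds for $\gamma\le 0$. Those bounds do suffice: for $\gamma<0$ you can drop the long swapped edge because its cost is negative, and for $\gamma=0$ the coefficients become additive constants. For $\gamma\in(0,1)$ you plan an orientation-aware analysis of the orderings, and your estimate in the regime $v>Cu$ is correct.

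There is a genuine gap, though, in the regime $u\le v\le Cu$ for $\gamma\in(0,1)$. You correctly observe that the crude bound fails there. But you then only announce that a case analysis with exact coefficients would work, and you even hedge on whether orientations obstruct it. That regime is the whole content of the paper's Case 4, so as written the proposition is not established for $\gamma\in(0,1)$. The fallback through \cref{lem:levels} does not close it. First, $x$ and $y$ need not lie in a common level. Second, all that minimality gives you about these four points is that $\langle x,M(x)\rangle$ and $\langle y,M(y)\rangle$ are not entwined, and that removes only the easy configurations.

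What is missing is to record, for each ordering, which edges share a coefficient. Normalize $t=|x-y|$. Since $u,v>t$, neither $M(x)$ nor $M(y)$ lies between $x$ and $y$, which leaves three types: entwined, nested, and separate with both edges pointing outward.
\begin{itemize}
\item \emph{Entwined.} If the two original edges have the same orientation, then all four edges do. The swap replaces lengths $u,v$ by $t,u+v-t$, so it is strictly cheaper by strict concavity. Otherwise each swapped edge is shorter than the original edge of the same orientation.
\item \emph{Nested.} Both swapped edges have the orientation of the outer original edge, and $\{M(x),M(y)\}$ is shorter than that outer edge. So the change in cost is less than $\alpha_{\mathrm{out}}t^\gamma-\alpha_{\mathrm{in}}\ell_{\mathrm{in}}^\gamma$, where $\ell_{\mathrm{in}}>\kappa t$ is the inner length. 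This is negative once $\kappa^\gamma\ge\max(a,1/a)$.
\item \emph{Outward separate} (e.g.\ $M(x)<x<y<M(y)$). The two original edges and the swapped edge of length $u+v+t$ all have one orientation, with coefficient $\alpha$ say. Only $\{x,y\}$ carries a possibly different coefficient $\beta$. With $u\le v$ and your tangent-line bound, the change is less than $u^\gamma\bigl(\beta\kappa^{-\gamma}-\alpha(1-\gamma(1+1/\kappa))\bigr)$, which is negative for $\kappa$ large.
\end{itemize}
This covers every ratio $v/u$ at once, so the split at $C$ becomes unnecessary.

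Note that the expression the paper prints for \cref{fig:quasipf}(i), which is exactly this outward case, is actually the one for (iv). So the outward computation is the piece you genuinely have to supply yourself.

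Finally, you should also handle the case where one of $x,y$ is unmatched. With unequal coefficients this needs $\kappa$ large relative to $a$, not merely $\kappa\ge 1$.
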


The above proposition tells us while these subcritical matchings
are not quite stable (where \cref{kappa-ineq} would have to hold for
$\kappa=1$), the quasistability property ensures that for two points relatively
close by that are not matched to each other, at least one of them isn't
matched extremely far away.

\begin{proof}[Proof of \cref{quasi}]
 The points $x$ and $y$ cannot be both unmatched in a minimal matching
 since they are of different colors.  Moreover, if only $y$ (say) is unmatched then, since $\kappa\geq 1$, if \cref{kappa-ineq} fails then $|x-y|<|x-M(x)|$, so matching $x$ to $y$ instead of $M(x)$ would reduce the cost.  Therefore we  assume $x$ and $y$ are both matched.
  We can also assume that they are not matched to each other,
  otherwise \cref{kappa-ineq} holds trivially.
 Suppose that \cref{kappa-ineq} fails, i.e.\
 \begin{equation}|x-M(x)|,\;|y-M(y)|> \kappa|x-y|.\label{fails}\end{equation}
 We claim that, with the appropriate choice of $\kappa$, modifying $M$
 by matching instead the pairs $\langle x,y\rangle$ and $\langle
 M(x),M(y)\rangle$ strictly reduces the cost, in contradiction to
 minimality.
 We consider the following four cases.

\begin{leftbar}
{\bf Case 1:} For fixed $a$ and $\gamma=-\infty$, we can take
$\kappa=1$ and the claim is immediate since the minimal matching is stable.\\
{\bf Case 2: } For $-\infty<\gamma<0$, we choose $\kappa$ in the
following way. Define $\underline{a} = \inf \{a(\theta):\theta\in S^{d-1}\}$ and $\overline{a} =
\sup\{a(\theta):\theta\in S^{d-1}\}$ to be  the infimum and supremum values
of $a$ on the unit sphere. Now, we take any $\kappa>
(\frac{\underline{a}}{2\overline{a}})^{1/\gamma}$. If we assume that
the coefficients for edges $\langle x,M(x)
\rangle, \langle y,M(y) \rangle,\langle x,y \rangle, \langle M(x),M(y)
\rangle$ are $a_1,a_2,a_3,a_4$ respectively, comparing the costs of
the two matchings we get
 \begin{align*}
   -a_1|x-M(x)|^\gamma-a_2|y-M(y)|^\gamma&\leq- \overline{a} 2 \kappa^\gamma |x-y|^\gamma\\
   <-\underline{a} |x-y|^\gamma&\leq- a_3|x-y|^\gamma-a_4 |M(x)-M(y)|^\gamma.
 \end{align*}
This implies the matching with edges $\langle x,y \rangle$ and
$\langle M(x),M(y) \rangle$ is minimal, contradicting our assumption
that $M$ is minimal.
\end{leftbar}
 For the remaining cases $0\leq \gamma<1$, we normalize so that the distance
 between $x$ and $y$ is $1$, and the length of the edge $\langle
 x,M(x) \rangle$ is $u$ and the length of the edge $\langle y,M(y)
 \rangle$ is $v$. The assumption in \cref{fails} becomes $u,v>\kappa$ and
 due to the triangle inequality we have $$  |M(x)-M(y)|\leq
 |x-y|+|x-M(x)|+|y-M(y)| = 1+u+v.$$

   \begin{figure}[h]
     \centering
     \begin{tikzpicture}[scale=\textwidth/20cm]
       \node[circle,fill=black,inner sep =-1.5pt,label=left:$x$] (x1)
       at (11,1){};
       \node[circle,draw,fill=white,inner sep = -1.5pt,label=right:$y$] (y1)
       at (12,1){};
       \node[circle,fill=white,draw,inner sep = -1.5pt, label=left:$M(x)$]
       (Mx1) at (16,1){};
       \node[circle,fill=black,inner sep = -1.5pt, label=right:$M(y)$]
       (My1) at (18,1){};
       \draw[->] (x1) [out = 45,in=135] to (y1);
       \draw[<-] (Mx1) [out=45,in=135] to (My1);
       \draw[->,dashed] (x1) [out=-45,in=-135] to (Mx1);
       \draw[<-,dashed] (y1) [out=-45,in=-135] to (My1);
% second line of points
       \node[circle,fill=black,inner sep =-1.5pt,label=left:$x$] (x2)
       at (-1,1){};
       \node[circle,draw,fill=white,inner sep = -1.5pt,label=right:$y$] (y2)
       at (0,1){};
       \node[circle,fill=white,draw,inner sep = -1.5pt, label=right:$M(x)$]
       (Mx2) at (6,1){};
       \node[circle,fill=black,inner sep = -1.5pt, label=left:$M(y)$]
       (My2) at (4,1){};
       \draw[->] (x2) [out = 45,in=135] to (y2);
       \draw[<-] (Mx2) [out=180-45,in=45] to (My2);
       \draw[->,dashed] (x2) [out=-45,in=-135] to (Mx2);
       \draw[<-,dashed] (y2) [out=-45,in=-135] to (My2);
       % third line of points
        \node[circle,fill=black,inner sep =-1.5pt,label=left:$x$] (x3)
       at (1.5,3){};
       \node[circle,draw,fill=white,inner sep = -1.5pt,label=right:$y$] (y3)
       at (2.5,3){};
       \node[circle,fill=white,draw,inner sep = -1.5pt, label=left:$M(x)$]
       (Mx3) at (-1,3){};
       \node[circle,fill=black,inner sep = -1.5pt, label=right:$M(y)$]
       (My3) at (6,3){};
       \draw[->] (x3) [out = 45,in=135] to (y3);
       \draw[<-] (Mx3) [out=30,in=150] to (My3);
       \draw[->,dashed] (x3) [out=180+45,in=-45] to (Mx3);
       \draw[<-,dashed] (y3) [out=-45,in=-135] to (My3);
       %fourth row of points
            \node[circle,fill=black,inner sep =-1.5pt,label=left:$x$] (x3)
       at (13.5,3){};
       \node[circle,draw,fill=white,inner sep = -1.5pt,label=right:$y$] (y3)
       at (14.5,3){};
       \node[circle,fill=black,inner sep = -1.5pt, label=left:$M(y)$]
       (My3) at (11,3){};
       \node[circle,fill=white,draw,inner sep = -1.5pt, label=right:$M(x)$]
       (Mx3) at (18,3){};
       \draw[->] (x3) [out = 45,in=135] to (y3);
       \draw[<-] (Mx3) [out=180-30,in=30] to (My3);
       \draw[->,dashed] (x3) [out=-45,in=-135] to (Mx3);
       \draw[<-,dashed] (y3) [out=180+45,in=-45] to (My3);
       \node at (8.75,1){(iv)};
       \node at (8.75,3){(ii)};
       \node at (-3.75,1){(iii)};
       \node at (-3.75,3){(i)};
    \end{tikzpicture}
     \caption{ In order
     to prove quasistability (\cref{quasi}) for matchings on $\R$ with
     $\gamma\in(0,1)$, we consider the above four configurations. We
     show that  if the edges $\langle x,M(x) \rangle$ and $\langle y,M(y)
     \rangle$ get arbitrarily large, the minimal matching will switch to
     $\{\langle x,y \rangle, \langle M(x),M(y) \rangle\}$.}\label{fig:quasipf}
   \end{figure}

\begin{leftbar}
 {\bf Case 3:} In the case $\gamma=0$, the cost function is of the
 form $\log(x)+b(\theta)$. We assume that the different values of $b$
 along the different rays have
 $\underline{b}=\inf\{b(\theta):\theta\in S^{d-1}\}$ and
 $\overline{b}=\sup\{b(\theta):\theta\in S^{d-1}\}$. Then, the change in cost associated
 with the modification is
 \begin{align*}
   \log|x-y|&+b_3+\log|M(x)-M(y)|+b_4-\log|x-M(x)|-b_1-\log|y-M(y)|-b_2
   \\
  %  & \leq  \log 1+b_3+\log|1+u+v|+b_4-\log|u|-b_1-\log|v|-b_2\\
   & \leq \log\left( \frac{|x-y||M(x)-M(y)|}{|x-M(x)||y-M(y)|} \right)
     + 2(\overline{b}-\underline{b})\\
            & \leq \log\left(\frac{1+u+v}{uv} \right) +
   2(\overline{b}-\underline{b}) \\
   &=\log\Bigl(\frac{1}{uv}+\frac{1}{u}+\frac{1}{v}\Bigr)
   +2(\overline{b}-\underline{b}).
 \end{align*}
 We can take $\kappa>\frac{3}{e^{-5(\overline{b}-\underline{b})}}$, so
 that the above expression $\log\Bigl(\frac{1}{uv}+\frac{1}{u}+\frac{1}{v}\Bigr)
   +2(\overline{b}-\underline{b})<0$.\\
   {\bf Case 4:}
   Finally for $0<\gamma<1$ for $d=1$, we consider the four point
   configurations on $\R$ as illustrated in \cref{fig:quasipf}. We
 assume that $|x-y|=1,|x-M(x)|=u$ and $|y-M(y)|=v$. For the
 configuration in
 \cref{fig:quasipf}(i), the difference of costs of the two matchings
 is given by $$-a_1 u^\gamma-a_2v^\gamma+a_1+a_2(v+1-u)^\gamma,$$ which
 is negative for all $u>1$. For points as in \cref{fig:quasipf}(ii),
 all the edges have the same coefficient and since $\gamma<1$, the
 minimal matching avoids the entwined edges. For
 \cref{fig:quasipf}(iii), the change in cost is given by
 \begin{align}
   \label{eq:temp0}
   &  a_1+a_1(u-v-1)^\gamma - a_1u^\gamma-a_2 v^\gamma\nonumber \\
   &= (a_1-a_2v^\gamma) +a_1((u-v-1)^\gamma-u^\gamma),\nonumber
 \end{align}
which is negative for $v$ large enough.
Finally in \cref{fig:quasipf}(iv), the change in cost is given by $$
a_1+ a_2(v+1-u)-a_1u^\gamma-a_2v^\gamma,$$ which is negative for any
$u>1$.

\end{leftbar}
\end{proof}

\begin{remark}[Quasistability in higher dimensions]
  The proof of \cref{quasi} above, works in higher dimensions for all
  values of $\gamma$ outside of $(0,1)$. For $\gamma\in(0,1)$, if we
  consider the difference in cost of the two matchings and use
  triangle inequality, we can show that the change in cost is at most
  $\overline{a}\left[ 1+(1+u+v)^\gamma \right]-\underline{a}\left[
  u^\gamma+v^\gamma \right]$, which for $u=v\rightarrow \infty$
  approaches $\underline{a}u^\gamma\left[
    \frac{\overline{a}}{\underline{a}} 2^\gamma-2 \right]$.
If $\log_2\left(
  \frac{\overline{a}}{\underline{a}} \right)<1-\gamma$, the above
  expression will be negative which implies quasistability holds.
 % expression to be negative, we require $\log_2\left(
  %  \frac{\overline{a}}{\underline{a}} \right)<1-\gamma$.
  Therefore, in
  higher dimensions, the only $(\gamma,a)$-minimal matchings where quasistability might not
  hold are those with $\gamma\in(0,1)$ and $\log\left(
  \frac{\overline{a}}{\underline{a}} \right)>1-\gamma$.
\end{remark}

\begin{proposition}[Proposition 30 of \cite{minimal}]\label{prop:eta}
  Let $R$ and $B$ be independent Poisson Point processes of intensity
  $1$ on $\R$, define the random walk $W$ as in \cref{walk}.
  Fix $\eta = 2\kappa+1 >1$. There exists an almost surely positive, finite
  random variable $Y=Y_\eta$ such that $W>0$ on $[-\eta
  Y,-Y]\cup[Y,\eta Y]$.
\end{proposition}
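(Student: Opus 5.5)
The plan is to reduce the statement to a property of a one-sided random walk and then use Brownian scaling together with a zero–one argument. Since $R$ and $B$ are independent Poisson processes of intensity $1$, the walk $W$ defined in \eqref{walk} restricted to $[0,\infty)$ is a continuous-time simple symmetric random walk. Its reflection $t\mapsto W(-t)$ for $t>0$ is an independent copy, up to the $\pm1$ shift at the origin coming from the Palm convention. For $y>0$, let $A^+_y$ be the event that $W>0$ on $[y,\eta y]$, and let $A^-_y$ be the event that $W>0$ on $[-\eta y,-y]$. It suffices to show that almost surely there is some $y>0$ with both $A^+_y$ and $A^-_y$ occurring; we then set $Y$ equal to such a $y$, for instance the infimum over a suitable discrete sequence, which can be taken measurable.

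First I would restrict to the geometric sequence $y_n=\lambda^n$ with $\lambda>\eta$ large. The intervals $[y_n,\eta y_n]$ are then disjoint and well separated. By Donsker's invariance principle, the rescaled walk $W(y_n\,\cdot)/\sqrt{y_n}$ converges to a two-sided Brownian motion $(B_t)_{t\in\R}$. The boundary event is harmless because the limiting event $\{B>0 \text{ on } [1,\eta]\}$ has a null boundary. Hence
\[
\mathbb P(A^+_{y_n}\cap A^-_{y_n})\to p:=\mathbb P\bigl(B>0 \text{ on } [1,\eta]\bigr)^2>0 .
\]
Here $p>0$ because Brownian motion has positive probability of lying in any open tube around a continuous path, and we can take a path that is positive on $[1,\eta]$.

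Next, to pass from positive probability to "infinitely often almost surely", I would use one of two routes. The first is a tail zero–one argument: the event $\limsup_n (A^+_{y_n}\cap A^-_{y_n})$ is invariant under changing finitely many points of $R\cup B$ in a bounded window. Such a change shifts $W$ far away by a bounded constant, which is negligible compared with $\sqrt{y_n}$ in the limit. To make this rigorous, I would instead state the event with the strengthened requirement $W>\epsilon\sqrt{y}$, and apply Hewitt–Savage or Kolmogorov zero–one to the independent increments. Fatou's lemma then gives $\mathbb P(\limsup)\ge p>0$, so the probability equals $1$. The second route, which may be cleaner, is to go to the Brownian limit directly and use scale invariance of Brownian motion: the events $\{B>0 \text{ on } \pm[\lambda^n,\eta\lambda^n]\}$ form a stationary ergodic sequence under the scaling map, so they occur infinitely often almost surely. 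One then transfers this back to the walk by a KMT or strong coupling, whose error $O(\log y)$ is much smaller than $\sqrt y$. Either route yields some (indeed infinitely many) $n$ with both events, and we take $Y=y_n$ for the smallest such $n$, which is finite and positive almost surely.

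I expect the main obstacle to be the zero–one step, namely handling the dependence between the events at different scales and the discrete boundary effects. I would try the coupling-to-Brownian-motion approach first: couple $W$ on each side with a Brownian motion so that $|W(t)-B_t|=O(\log t)$ almost surely. The event $B>\epsilon\sqrt{y_n}$ on $\pm[y_n,\eta y_n]$ then implies $W>0$ there for large $n$. For $B$ itself, the infinitely-often statement follows from ergodicity of the scaling flow $B_t\mapsto B_{ct}/\sqrt c$ on two-sided Brownian motion, with the two sides independent.
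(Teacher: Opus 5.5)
The paper gives no argument of its own here. It only refers to \cite[Section 7, Lemmas 32--33]{minimal}, remarking that the proof uses properties of continuous-time random walks. Your proposal is therefore a self-contained proof, and it is correct. Your first route is the more economical one and is enough by itself: Donsker's theorem along $y_n=\lambda^n$, the reverse Fatou inequality $\mathbb{P}(\limsup_n E_n)\ge\limsup_n\mathbb{P}(E_n)$, and Kolmogorov's $0$--$1$ law for the independent $\sigma$-fields generated by the configuration in the shells $\{t:k\le|t|<k+1\}$, $k\ge0$. The second route (ergodicity of Brownian scaling plus a KMT or Skorokhod coupling) is also valid, but it brings in heavier tools that this statement does not need.

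The step to state more carefully is the $0$--$1$ argument. Let
\[
E^{\epsilon}_n=\{W>\epsilon\sqrt{y_n}\text{ on }[-\eta y_n,-y_n]\cup[y_n,\eta y_n]\}.
\]
For a \emph{fixed} $\epsilon$, the event $\limsup_n E^{\epsilon}_n$ is still not invariant under modifying the points in a bounded window. Such a modification shifts $W$ far from the origin by a constant $c$, and this can destroy a margin that is only barely above $\epsilon\sqrt{y_n}$.

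Instead take $G=\bigcup_{m\ge1}\limsup_n E^{1/m}_n$. A shift by $c$ maps $\limsup_n E^{1/m}_n$ into $\limsup_n E^{1/(2m)}_n$, because $\sqrt{y_n}/(2m)>|c|$ for large $n$. Hence $G$ is exactly invariant and lies in the tail $\sigma$-field.

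You also have $\limsup_n E^{\epsilon}_n\subseteq G\subseteq\limsup_n E^{0}_n$, and $\mathbb{P}(\limsup_n E^{\epsilon}_n)\ge\lim_n\mathbb{P}(E^{\epsilon}_n)>0$. Here the limit is the square of the probability that $\sqrt2$ times a standard Brownian motion exceeds $\epsilon$ on $[1,\eta]$. Together these give $\mathbb{P}(G)=1$. Taking $Y=y_n$ for the first $n$ with $E^0_n$ then gives the required almost surely positive, finite random variable.
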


\begin{figure}[h]
  \centering
  \includegraphics[trim={6cm 1.5cm 6cm 1.5cm},clip,width=\textwidth]{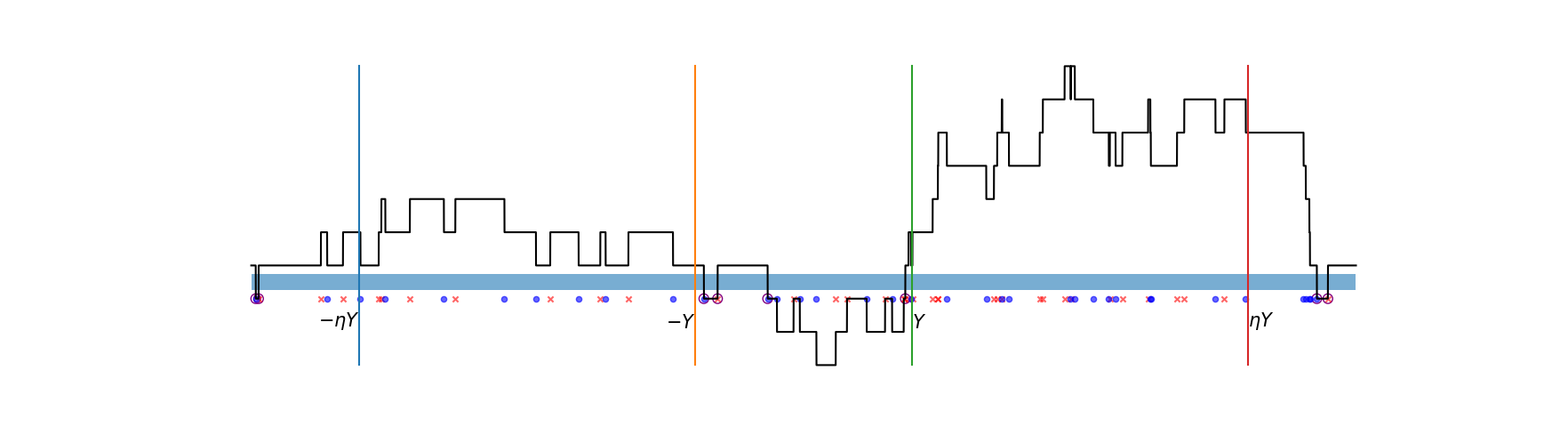}
  \caption{Points that are in the level $\Lambda_0$ and restricted to
  the interval $[-Y,Y]$ must be matched to each other since they are
  all within distance $2Y$ of each other and any other possible point
  they could be matched to is a distance $(\eta-1)Y = 2\kappa Y$ away.
  Matching any two points in $\Lambda_0\cap[-Y,Y]$ outside the
  interval $[-\eta Y,\eta Y]$ contradicts quasistability of
  subcritical $(\gamma,a)$-minimal matchings
  (\cref{quasi}).}\label{fig:rw_eta}
\end{figure}

The proof of \cref{prop:eta} can be found in \cite[Section 7, Lemma
32, Lemma 33]{minimal}, it uses properties of continuous time random
walks to show that for any $\eta$, there will be a value of $Y$ such
that the configuration in \cref{fig:rw_eta} holds.
We will use \cref{prop:eta} with the value of $\kappa$ found in
\cref{quasi} in the rest of this proof.

We are finally ready to prove \cref{compwbal}.

\begin{proof}[Proof of \cref{compwbal}]
  (i) We saw in \cref{super} for $\gamma\in\{1^+\}\cup(1,\infty)$ and
  in \cref{lem:1minus} for $\gamma=1^-$ that regardless of the value
  of $a\in(0,\infty)$, the family of minimal matchings will be given
  by $\{\widetilde{M}_k\}_{k\in\Z}$ and
  $\{M_\infty,M_{-\infty}\}\cup\{M_k\}_{k\in\Z}$ respectively.
  Therefore, the balanced and unbalanced minimal matchings are
  identical for these values of $\gamma$.

  For $\gamma\in\{-\infty,\infty\}$ and fixed finite $a\in(0,\infty)$,
  using the argument in \cref{sec:modelmainresults} we know that
  the minimal matchings will be given by the stable and the altruistic
  matchings respectively, both of which only depend on $\gamma $ and
  not on the value of $a$. \\
  (ii) Fix $\gamma\in(-\infty,1)$ and $a\in(0,\infty)$. From
  \cref{quasi}, we know there exists $\kappa_1 = \kappa(\gamma,a)$ and
  $\kappa_2 = \kappa(\gamma,1)$ such that
  \eqref{kappa-ineq} holds for pairs of points in the unbalanced $(\gamma,a)$-minimal
  matching and the balanced $(\gamma,1)$-minimal matchings
  respectively. If we take $\kappa =
  \max\{\kappa_1,\kappa_2\} $, then with this value of $\kappa$,
  \eqref{kappa-ineq} holds for points in both these matchings. Let
  $\eta=2\kappa+1$.
  Now, we use \cref{prop:eta} to find the corresponding $Y$ such that
  the random walk $W>0$ on $[-\eta Y,-Y]\cup[Y,\eta Y]$.

  Consider the
  level $\Lambda_0$. There are at least two points
contained in the set $\Lambda_0\cap[-Y,Y]$ because $W(Y),W(-Y)>0$
  from our assumption in \cref{prop:eta},
and $W(0^-)=-1$ from
our assumption about the Palm process.
We claim that in both the $(\gamma,a)$-minimal matching and the
    $(\gamma,1)$-minimal matching, all the points in
    $\Lambda_0\cap[-Y,Y]$ are matched to other points in the same set.
    This happens because (1) there is an equal number of red and
blue points in the set $\Lambda_0\cap[-Y,Y]$ and (2) all of these points are
at most a distance of $2Y$ apart from each other, and any other points
they could be matched to are outside the interval $[-\eta Y,\eta Y]$
and hence at least a distance $(\eta-1)Y = \kappa (2Y)$ away. Since
\eqref{kappa-ineq} holds for this value of $\kappa$ for both the
unbalanced $(\gamma,a)$-minimal matching and the balanced
$(\gamma,1)$-minimal matching, no pair of oppositely colored points which are a
distance of $2Y$ from each other will both be matched to
partners further than $2\kappa Y$ away in either the
  $(\gamma,a)$-minimal matching or the $(\gamma,1)$-minimal matching.

This proves that the set $\Lambda_0\cap[-Y,Y]$ will be matched to
itself in both the $(\gamma,a)$-minimal matching and the
$(\gamma,1)$-minimal matching. Therefore, any difference that these
two matchings might have, must also be restricted to this finite set.
\end{proof}

The above proof \cref{compwbal}(ii) has some of the same ideas as the
proof of  \cite[Theorem 3(ii)]{minimal}, which demonstrates that for
$\gamma,\gamma'<1$, the balanced $\gamma$-minimal matching and the
balanced $\gamma'$-minimal matching have finite difference.

\begin{remark}\label{rem:canonical}
  There are several results proven in \cite[Theorem 1,2,3]{minimal}, which hold for
unbalanced costs as well. These results are regarding perfectness and
existence of of the
minimal matchings, classifications of minimal matchings in terms of
their properties and comparisons between minimal matchings for two
different values of $\gamma$. These hold for unbalanced costs as well
with some modifications. The only significant difference is the proof
of \cref{quasi} which proves quasistability.

\end{remark}

\begin{remark} \label{rem:trans}
Observe that finite difference is not a transitive property any pair of the
$(1^-,a)$-minimal matchings
$\{M_\infty,M_{-\infty}\}\cup\{M_k\}_{k\in\Z}$ do not have finite
difference with each other but they have finite difference with any of
the unique
$(\gamma,a)$-minimal matchings for fixed $\gamma<1$ and $a\in(0,\infty)$.
\end{remark}

\section{Limiting values of $a$}\label{sec:lims}

In this section, we discuss $(\gamma,a)$-minimal matchings when we take limits
of $a$ to be either $0$ or $\infty$ and prove
\cref{thm:a-infty,thm:a-zero,thm:weirdlim}.
 In \cref{rem:lims}, we discuss why \cref{thm:a-infty,thm:a-zero} are
 somewhat counterintuitive.

First, we prove \cref{thm:a-infty}. In this theorem, we assume that
$a=\infty$ for the cost functions listed in \eqref{eq:costonedim}.
This implies that for $\gamma\geq 0$, the cost of any given blue-red
edge is infinite and for $\gamma<0$, the cost of any given blue-red
edge is $-\infty$. This is because the cost function is negative for
$\gamma<0$.
In order to find the minimal matching in situations where some edges
may have infinite cost, we restrict the minimization over all
matchings consisting entirely of edges that cost less than infinity.
This might not always be possible for any given point configurations,
but it is possible if the red and blue points are given by Poisson
point processes on $\R$.

\begin{proof}[Proof of \cref{thm:a-infty}]
  (i) For $\gamma\in(-\infty,0)$ and $a=\infty$, the cost function
  is given by
  \[ f_{\gamma,\infty} (x) =
    \begin{cases}
      -\infty, & x\leq 0, \\
      -x^\gamma, & x>0,
    \end{cases}
  \]
  implying that any blue-red edge costs $-\infty$. There are
  uncountably many matchings consisting only of blue-red edges which
  can be constructed by taking the matching $M_\infty$ (constructed in
  \cref{lem:1minus}) and flipping any number of pairs of the nested
  edges into entwined edges so that no edge changes orientation. All
  these matchings will be minimal since they are made of edges which
  cost $-\infty$.
(ii)   We know that for $\gamma=1^-$, there exists a minimal matching
  with only red-blue edges which can be constructed by taking all the
  edges which are below the random walk, i.e., the matching
  $M_{-\infty}$ in \cref{fig:1minus}.   This matching is locally
  infinite and a factor.
  This matching has to be the unique $(1^-,\infty)$-minimal matching since
  any other matching either contains a blue-red edge which has
  infinite weight or entwined edges, which aren't preferred for a
  subcritical matching. Similarly for $\gamma\in[0,\infty)$, we also prefer
  nested edges to entwined edges and since the matching $M_{-\infty}$ contains no
  edges with infinite cost and cannot be improved by finite
  reordering, $M_{-\infty}$ will be the unique $(\gamma,\infty)$-minimal
  matching for $\gamma\in(-\infty,1)\cup\{1^-\}$.

(iii) For $\gamma=1$,
there are uncountably many $(1,\infty)$-minimal
 matchings.  We can construct these
 by taking the matching $M_{-\infty}$ and considering any reordering
 which does not introduce a blue-red edge. As long as all the edges
 are red-blue edges, regardless of if they are entwined or nested, the
 cost of the matching remains unchanged. This gives us an uncountable
 family of matchings with the same cost for any comparable
 collection of edges. These are all minimal since any other matching
 not included has an edge of infinite cost.

  (iv) For $\gamma>1$, first we claim that the $(\gamma,\infty)$-minimal
  matching cannot have an unmatched point. And then argue that there
  cannot be a matching with every point matched.

  \begin{leftbar}
    \begin{claim}
     For $\gamma>1$, a $(\gamma,\infty)$-minimal
  matching cannot have an unmatched point.
\end{claim}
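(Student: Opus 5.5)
We argue by contradiction. Let $M$ be a $(\gamma,\infty)$-minimal matching with $\gamma>1$, so every edge of $M$ is a red-blue edge $\langle r,b\rangle$ with $r<b$ and cost $(b-r)^\gamma$. Suppose some point $z$ is unmatched. Following the convention of \cref{def:mm}(ii), a minimal matching must leave as few unmatched points as possible within any finite window. So it suffices to find finitely many edges of $M$ that, together with $z$, can be rematched using only finite-cost red-blue edges so that one more point is matched. The two colours are handled separately; in each case we build a finite augmenting path of red-blue edges.

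\textbf{Case 1: $z$ is a red point $r_0$.} Let $b$ be the nearest blue point to the right of $r_0$; it exists almost surely for a Poisson process. If $b$ is unmatched, adding $\langle r_0,b\rangle$ matches two more points, a contradiction. Otherwise $b=M(r_1)$ with $r_1<b$. If $r_1<r_0$, replace $\langle r_1,b\rangle$ by $\langle r_0,b\rangle$. This is still red-blue, it is strictly shorter, and the number of unmatched points does not change. Repeating this exchange, or continuing the alternating path to the right, we can reach a configuration where an extra edge can be added. If instead $r_1\in(r_0,b)$, the red point $r_1$ plays the role of $r_0$ and we move to the next blue point to the right.

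\textbf{Case 2: $z$ is a blue point $b_0$.} Look at red points to the left of $b_0$ and follow the same alternating-path argument leftwards.

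The main obstacle is making sure these alternating paths terminate after finitely many steps. Termination is what lets the augmentation be a finite modification, which \cref{def:mm}(iii) requires. For this I would use the walk $W$ from \eqref{walk}. Because all edges go rightwards, every red-blue edge of $M$ crossing a point $t$ contributes to the height of $W$. Recurrence of the simple symmetric continuous-time walk, which is the Poisson case, guarantees that a point $t>z$ exists where $W$ returns to a level forcing a surplus of blue points in $(z,t]$ over the reds that $M$ matches into it. Some blue point in that window is then free to be rematched. This gives a finite augmenting path, and so $z$ could be matched, contradicting minimality.

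For the more delicate $\gamma>1$ comparisons, I would use \cref{possibilities}: among red-blue configurations, $\gamma>1$ prefers entwined edges over nested ones. Any rematching that preserves orientation can therefore be chosen to keep the cost comparison favourable, or at worst irrelevant, since a smaller number of unmatched points takes priority over cost.
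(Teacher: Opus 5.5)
\textbf{There is a genuine gap.} You try to contradict minimality by \emph{increasing the number of matched points}, and this cannot work. Your closing claim that the walk argument ``gives a finite augmenting path'' is false.

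The blue point $y\in(z,t]$ that is not matched inside $(z,t]$ is, since every edge points rightwards, either unmatched or matched to some red $r'<z$. In the second case, rematching $y$ to $z$ only moves the unmatched point from $z$ to $r'$. Nothing forces the chain ever to end at an unmatched blue point. The same objection applies to your Case~1 claim that repeated exchanges reach ``a configuration where an extra edge can be added''.

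In fact there may be no other unmatched point at all. On one level $\Lambda_j=\{\cdots<r_{-1}<b_{-1}<r_0<b_0<r_1<\cdots\}$ take $\{\langle r_i,b_{i+1}\rangle:i\le -1\}\cup\{\langle r_i,b_i\rangle:i\ge 1\}$, and take $m^k_{rb}$ on every other level. This uses only red-blue edges and leaves only $r_0$ unmatched. Every finite $M$-closed set containing $r_0$ then has one more red than blue point, so no finite rematching can match more points. The cardinality criterion alone is powerless here.

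Two smaller points. Saying ``$r_1$ plays the role of $r_0$'' does not make sense, since $r_1$ is matched. Your final remarks about entwined versus nested edges play no role.

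The missing idea is the cost part of \cref{def:mm}(ii): among rematchings with the same number of unmatched points, a minimal matching must also minimize cost. Your own exchange in Case~1, replacing $\langle r_1,b\rangle$ (with $r_1<r_0<b$) by $\langle r_0,b\rangle$, already \emph{is} the contradiction. It keeps the number of unmatched points fixed and strictly lowers the cost, since $(b-r_0)^\gamma<(b-r_1)^\gamma$. You should stop there.

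To guarantee such an exchange exists, combine it with your walk observation. By recurrence choose $t>z$ with $\#(B\cap(z,t])>\#(R\cap(z,t])$, so some blue $y\in(z,t]$ is not matched inside $(z,t]$. Either $y$ is unmatched and $\langle z,y\rangle$ can be added, or $M(y)=r'<z$. In the latter case, on the three points $\{r',z,y\}$ the edge $\langle z,y\rangle$ is strictly cheaper than $\langle r',y\rangle$. No alternating paths or termination issues arise.

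This is exactly the paper's proof, and the blue case is the mirror image. It uses only monotonicity of the cost, not $\gamma>1$.
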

\begin{proof}
  Say any given $(\gamma,\infty)$-minimal matching has an unmatched point.
  WLOG we assume there is an unmatched red point at $0$. Let $W$
  denote the
  continuous time random walk representation of the two Poisson point
  processes. There exists $x>0$ such that $W(x)<0$. This implies in
  the interval $(0,x)$, the number of blue point exceeds the number of
  red points. Say there is a blue point at $y$ such that it does not
  have a partner in the interval $(0,x)$. This blue  point at $y$ must be matched to the left of $0$, since it
  otherwise the cost of the edge would be infinite. Hence, our minimal
  matching can be improved by matching the blue point at $y$ to the
  red point at $0$.

  A similar argument for an unmatched blue point shows we cannot have
  any unmatched points.
\end{proof}
  \end{leftbar}

  We know that for $\gamma>1$, the minimal matchings must preserve
  relative orderings of the red and blue points. There is no way of
  doing this in a way which only has red-blue edges since this would
  correspond to the continuous
  time random walk having a lower bound.
\end{proof}

\begin{remark} \label{rem:lims}
  Notice that the behaviour of the $(\gamma,a)$-minimal matchings as
  $a\rightarrow 0$ and $a\rightarrow\infty$ is very different from
  what we observe for
  the $(\gamma,0)$ and $(\gamma,\infty)$-minimal matchings.
  \begin{enumerate}[(i)]
\item   This is clearly seen for supercritical $\gamma$, i.e.,
$\gamma\in\{1^+\}\cup(1,\infty)$, where for all finite $a$, there is a
fixed countable family of $(\gamma,a)$-minimal matchings however there does not
exist even one $(\gamma,\infty)$-minimal matching. Furthermore, the
family of subcritical $(\gamma,a)$-minimal matchings is a fixed countable
family of non-factor matchings whereas, the $(\gamma,\infty)$-minimal
matching is a unique factor matching.
\item Another possibly counterintuitive observation is that  setting
  $a=0$ and $a=\infty$ result in very different minimal matchings.
  One might initially suspect that these might have
similar properties since the two cost functions $f_{\gamma,a}$ and
$f_{\gamma,1/a}$ are simply reflections and rescalings of each other and
because \cref{fig:scatter} with the $a$-axis plotted on the $\log$ scale
appears to be symmetric about $a=1$.
However, this is not the case.
\end{enumerate}
This is because in
\cref{thm:a-infty} and \cref{thm:a-zero}, we are minimizing over
different sets. In \cref{thm:a-infty}, for $\gamma>0$ we consider matchings
consisting only of red-blue edges whereas in \cref{thm:a-zero}, we
consider all possible matchings.
\end{remark}

Below, we prove \cref{thm:a-zero}, which considers a cost function
which assigns cost zero to all blue-red edges.

\begin{proof}[Proof of \cref{thm:a-zero}]
  (i) For $\gamma\in(0,\infty)\cup\{1+\}\cup\{1-\}$, the cost function
  is given by
  \[
 f_{\gamma,0}(x) =
    \begin{cases}
      0, & x<0,\\
      x^\gamma, & x\geq 0. \end{cases}
      \]
We consider the matching $M_{\infty}$, previously described in the proof of
      \cref{thm:a-infty}(i) and \cref{fig:1minus}. This locally
      finite, factor matching is constructed by taking the
      edges which lie above the random walk \cref{walk}. Since this
      matching consists only of blue-red edge, it has
      cost zero, which is the lowest possible cost, implying that it must be a
      $(\gamma,0)$-minimal matching for $\gamma>0$.

      Furthermore, we may swap any
      pair of nested edges with the same orientation to get entwined
      edges with the same zero cost. Any reordering which does not
      introduce a red-blue edge works. Since there are infinitely many
      such edges which can be swapped, we get uncountably many
      $(\gamma,0)$-minimal
      matchings for $\gamma\in(0,\infty)\cup\{1^-,1^+\}$.
      (ii) For $\gamma=0$, the cost function is given by
       \[
 f_{\gamma,0}(x) =
    \begin{cases}
      0, & x<0,\\
      \log x, & x\geq 0.\\ \end{cases}
      \]
Notice that the range of the two branches of the function ($x>0$ and
      $x<0$) have different ranges but a non-zero intersection. We
      show below that such a cost function cannot be
      scale invariant. Consider a configuration of points with two
      blue points and a red point in the middle. For smaller scales,
      the cost of the red-blue edge will be negative, and for larger
      scales, the cost of this edge will be positive, whereas the
      blue-red edge will always have zero cost. Since rescaling the
      points changes the minimal matching, the cost function is not
      scale invariant.

      Having said this, we attempt to find a family of minimal matchings below.
      For this cost function, blue-red edges shorter than length $1$
      are preferred over red-blue edges which are preferred over
      blue-red edges of length greater than $1$. Among the red-blue
      edges with length smaller than 1, all of whom have negative
      cost, we prefer shorter edges since these have a smaller cost.
      However all blue-red edges have cost equal to $0$, hence we
      prefer these to any given red-blue edge of length greater than
      $1$, which have positive cost.

      A matching can be constructed in the following way. From each
      red point $r$, we consider a growing interval given by $I^r_t =
      (r,r+t)$ at time $t<1$. We may match a red point to the first
      blue point which enters it's interval before time $t$. For the
      points unmatched at time $t=1$, we can construct a matching
      consisting entirely of blue-red edges according to the algorithm
      from \cref{lem:1minus} used to construct $M_{\infty}$.
      Similarly, this could be done for a different threshold, for
      example where the cost function looks like
      \[ f(x) =
        \begin{cases}
          0,&x\leq 0,\\
          \log(bx),&x>0.
        \end{cases}
      \]
      The minimal matching here would not be scale invariant, but it can
      be constructed using the same method as above.

      (iii) For $\gamma\in(-\infty,0)$, the cost function looks
      like \[  f_{-\gamma,0}(x)=
           \begin{cases}
             0 ,& x<0,\nonumber\\             - x^{-\gamma}, & x\geq
             0.
           \end{cases} \]
       In order to minimize this cost
                                 function, we would like to include as
                                 many red-blue edges as possible since
                                 these have negative costs. And among
                                 those, we want the smallest edges
                                 since these bring the cost down even
                                 more. Furthermore, recall that for
       subcritical $\gamma$, nested edges are always preferred over entwined
       edges.

Therefore, the minimal matching is given by $M_{-\infty}$, seen
in \cref{fig:1minus}. This matching consists only of red-blue edges
and also prioritizes minimizing the shortest edges.
\end{proof}

Finally for the proof of theorem \cref{thm:weirdlim}, we consider a
configuration of four points as in \cref{fig:weirdlim} with four
points alternating in color with consecutive distances $x$, $y$ and
$z$. This characterizes the matchings we get along different rays in
\cref{fig:scatter}.

\begin{figure}[h]
  \centering
 % \usetikzlibrary{shapes.geometric}
 % \usetikzlibrary{plotmarks} % LaTeX and plain TeX when using TikZ
  \begin{tikzpicture}
    \draw[mark=x, color=red,thick] plot coordinates {(0,0)};
        \draw[mark=x, color=red,thick] plot coordinates {(3,0)};
%    \draw[red,thick,mark=asterisk] at (-1,0);
%     \node[draw,fill, color=red, star,star points=7,inner sep = 0.3ex] at (0,0) {};
    % \filldraw [red] (0,0) circle (2pt);
 %    \node[draw,fill, color=red, star,star points=7,inner sep = 0.3ex] at (3,0) {};
        \filldraw [blue] (2,0) circle (2pt);
    %\filldraw [red] (3,0) circle (2pt);
    \filldraw [blue] (4.5,0) circle (2pt);
    \draw[-] (0+0.1,0) -- (2-0.1,0) node[midway,fill=white]{$x$} ;
    \draw[-] (2+0.1,0) -- (3-0.1,0) node[midway,fill=white]{$y$} ;
    \draw[-] (3+0.1,0) -- (4.5-0.1,0) node[midway,fill=white]{$z$} ;
    \draw[->] (0,0.15) [out=45,in=135] to (2,0.15);
    \draw[->] (3,0.15) [out=45,in=135] to (4.5,0.15);
    \draw[->] (0,-0.1) [out = 315,in=225] to (4.5,-0.1);
    \draw[->] (2,-0.1) [out=315,in=225] to (3,-0.1);
    %\node at (5.9,0.75) [align=left] {entwined edges/matching};
    %\node at (5.9,-0.75) [align=left] {nested edges/matching};
  \end{tikzpicture}
  \caption{For $\gamma\rightarrow -\infty$ and $a=\alpha^\gamma$, the
  minimal matching will be the one which includes the edge with the
  smallest value among $|x|, |z|,\alpha|y|$ and $|x+y+z|$.
  And for $\gamma\rightarrow \infty$ and $a=\alpha^\gamma$, the
  minimal matching avoids the largest of these four values.
  } \label{fig:weirdlim}
\end{figure}

\begin{proof}[Proof of \cref{thm:weirdlim}]
  (i) Since we are taking the limit as $\gamma\rightarrow -\infty$, we
  are in the subcritical regime which never contains entwined edges
  and only contains nested or separate edges (see \cref{possibilities}).

  Say $a=\alpha^\gamma$, then the cost of
  the nested edges in \cref{fig:weirdlim} will be
  $-(x+y+z)^\gamma-(\alpha y)^\gamma$ and
  the cost of the separate edges will be $-
  x^\gamma- z^\gamma$. Since we are taking $\gamma\rightarrow -\infty$, the
  term which is smallest out of $|(x+y+z)|$, $|\alpha y|$, $| x|$ and
  $|z|$ will have the largest negative contribution to the
  cost, so it must be included in the matching. The remaining terms
  will be irrelevant.
  If we let $c(x,y) $ denote the ``cost'' given by
  \begin{equation*}
  c(x,y) = [\mathbbm{1}_{x\leq
  y}+\alpha\mathbbm{1}_{x>y}]|x-y| = \begin{cases} \alpha|x-y|, &
  y<x, \\ |x-y|, & x\leq y,\end{cases}
\end{equation*}
the matching above simply minimizes the smallest value of this cost
function for the possible edges.
If we have two points $x$ and $y$ not matched to each other then it
must be because either $c(x,M(x))$ is smaller than $c(x,y)$ or
$c(y,M(y))$ is smaller than $c(x,y)$. Therefore we recover the
$\alpha$-stable matching. Finally, since the $\alpha$-stable matching
satisfy quasistability (\cref{quasi}), we can use the
argument in the proof of \cite[Proposition 26]{minimal}, to prove these matchings
exist, and are invariant and perfect.

(ii)
We use the same argument to recover the $\alpha$-altruistic matching. In this case, the largest term
out of  $|x+y+z|$, $|\alpha y|$, $| x|$ and $| z|$ will
dominate the sum, hence we aim to minimize the largest of these values
resulting in the $\alpha$-altruistic matching.
Note that for $d=1$ the set of $\alpha$-altruistic minimal matchings will
be identical to the countable set of supercritical minimal matchings
(pictured in \cref{fig:super}). This is due to \cref{super}, which argues that for any
$\gamma>1$, the matching prefers separate edges over entwined edges
over nested edges.
\end{proof}

\begin{remark}\label{rem:alphastab}
The definitions for $\alpha$-stable matchings and $\alpha$-altruistic
  matchings (\cref{def:alphastab,def:alphaalt}) and the result
  \cref{thm:weirdlim} can be extended to higher dimensions as well.
For stable matchings from \cite{minimal} we can consider growing balls centered at every red
  point of radius $t$ at time $t$ and match each red point to the
  first unmatched blue point which enters its ball. This gives us the
  stable matching (see \cite[Lemma 8]{minimal}).
  Similarly, for the $\alpha$-stable matching between points in
  $\R^d$, we can think of a shape given by $r+t A$ at time $t$ where
  $A$ is a fixed set and match the red point at $r$ to the first blue
  point to enter this shape.

\end{remark}

\bibliographystyle{unsrt}
\bibliography{references.bib}

\end{document}